\patchcmd{\subsection}{-.5em}{.5em}{}{}
\newcommand*{\doublerightarrow}[2]{\mathrel{
  \settowidth{\@tempdima}{$\scriptstyle#1$}
  \settowidth{\@tempdimb}{$\scriptstyle#2$}
  \ifdim\@tempdimb>\@tempdima \@tempdima=\@tempdimb\fi
  \mathop{\vcenter{
    \offinterlineskip\ialign{\hbox to\dimexpr\@tempdima+1em{##}\cr
    \rightarrowfill\cr\noalign{\kern.5ex}
    \rightarrowfill\cr}}}\limits^{\!#1}_{\!#2}}}
\newcommand*{\triplerightarrow}[1]{\mathrel{
  \settowidth{\@tempdima}{$\scriptstyle#1$}
  \mathop{\vcenter{
    \offinterlineskip\ialign{\hbox to\dimexpr\@tempdima+1em{##}\cr
    \rightarrowfill\cr\noalign{\kern.5ex}
    \rightarrowfill\cr\noalign{\kern.5ex}
    \rightarrowfill\cr}}}\limits^{\!#1}}}
\newcommand{\nd}[1]{\begin{smallmatrix}#1\end{smallmatrix}}
\newcommand{\uwithtext}[1]{\DOTSB\mathbin{\text{\tikz{%
    \node (a) {#1};
    \draw[black, rounded corners=1.2ex] 
        ([yshift=-2pt]a.north west) -- (a.south west) -- (a.south east) -- ([yshift=-2pt]a.north east);}}}}
\begin{document}

\newtheorem{definition}{Definition}[section]
\newtheorem{definitions}[definition]{Definitions}
\newtheorem{deflem}[definition]{Definition and Lemma}
\newtheorem{lemma}[definition]{Lemma}
\newtheorem{proposition}[definition]{Proposition}
\newtheorem{theorem}[definition]{Theorem}
\newtheorem{corollary}[definition]{Corollary}
\newtheorem{algo}[definition]{Algorithm}
\theoremstyle{remark}
\newtheorem{rmk}[definition]{Remark}
\theoremstyle{remark}
\newtheorem{remarks}[definition]{Remarks}
\theoremstyle{remark}
\newtheorem{notation}[definition]{Notation}
\newtheorem{assumption}[definition]{Assumption}
\theoremstyle{remark}
\newtheorem{example}[definition]{Example}
\theoremstyle{remark}
\newtheorem{examples}[definition]{Examples}
\theoremstyle{remark}
\newtheorem{dgram}[definition]{Diagram}
\theoremstyle{remark}
\newtheorem{fact}[definition]{Fact}
\theoremstyle{remark}
\newtheorem{illust}[definition]{Illustration}
\theoremstyle{remark}
\newtheorem{que}[definition]{Question}
\theoremstyle{definition}
\newtheorem{conj}[definition]{Conjecture}
\newtheorem{scho}[definition]{Scholium}
\newtheorem{por}[definition]{Porism}
\DeclarePairedDelimiter\floor{\lfloor}{\rfloor}

\renewenvironment{proof}{\noindent {\bf{Proof.}}}{\hspace*{3mm}{$\Box$}{\vspace{9pt}}}
\author[Sardar, Gonzalez Chaio, and Trepode]{Shantanu Sardar, Alfredo Gonzalez Chaio, and Sonia Trepode}
\address{CEMIM, FCEyN \\Universidad Nacional de Mar del Plata\\ CONICET, Argentina}
\email{shantanusardar17@gmail.com,agonzalezchaio@gmail.com,\\ strepode@mdp.edu.ar}
\title{{On the Irreducible Morphisms for Skew group algebras}}
\keywords{}
\subjclass[2020]{}

\begin{abstract}
For an algebraically closed field $K$, let $G$ be a finite abelian group of $K$-linear automorphisms of a finite-dimensional path algebra $KQ$ of a quiver $Q$. Under certain assumptions on the action of $G$, we show the existence of a certain kind of ``covering" that we call a Galois semi-covering functor, which becomes a Galois covering when the group action is free. We study the module category of its skew group algebra under this functor. As an application, we obtain a complete description of the irreducible morphisms and almost split sequences of skew group algebras and show that the (stable) rank is preserved under skewness. In particular, we determine the stable rank of skew-gentle algebras.
\end{abstract}

\maketitle
\section{Introduction}
Skew group algebras were first studied from the point of view of representation theory in \cite{ReRi85}. For a finite-dimensional algebra $\Lambda$ over a field $K$ and a finite group $G$ acting on $\Lambda$ by automorphisms, the skew group algebra $\Lambda G$ shares many representation-theoretic properties with $\Lambda$, often incarnated in properties of functors between $\mathrm{mod}\mbox{-}\Lambda$ and $\mathrm{mod}\mbox{-}\Lambda G$. Some central properties, like being a finite representation type, hereditary, Nakayama, or an Auslander algebra, and self-injectivity, are preserved under skewness. These algebras also share the same global dimensions. In contrast, a skew algebra of a basic (resp. connected) algebra is not always a basic (resp. connected) algebra. If $\Lambda$ is the quotient of a path algebra by an admissible ideal and $G$ is cyclic, then Reiten and Riedtmann describe the quiver $Q_G$ of (a basic version of) $\Lambda G$. Demonet \cite{Dem10} provides a complete description of the quiver of skew group algebra of a path algebra for arbitrary finite groups. Giovannini and Pasquali \cite{Gi19} consider a quiver with potential and construct the skew algebra of the associated Jacobian algebra for finite cyclic groups. The skew group algebra construction with a natural action of the dual group can recover the original quiver with potential. Moreover, it is shown that the property of being Frobenius and $d$-representation (in)finite is also preserved under skewness. The authors, with Plamondon \cite{GiPa19} study the skew group dg algebra of the Ginzburg dg algebra of a quiver with potential under the action of a finite abelian group and derive a functor between their cluster categories.

In this paper, we consider a finite abelian group that acts by permuting the vertices and preserving the arrow spans. Like all the articles described above, we follow the same convention that for a finite group $G$ of order $n$ and the algebra $\Lambda$, we study the skew algebra $\Lambda G$ when $n$ is invertible in $\Lambda$. De la Peña \cite{Jose83} studied skew group algebra for the algebras of finite representation type and observed that the skew group construction coincides with the Galois covering of an algebra in the case of admissible group action. This idea motivates us to introduce the concept of a Galois semi-covering functor (see Definition \ref{GSC}) between the module categories of an algebra and its skew algebra. We also show that this functor $F_\lambda$ is not a Galois covering when the group action has some fixed points. More precisely, we have the following result.

\begin{theorem} (Theorem \ref{diag})
Assume that $G$ acts on an algebra $\Lambda$ and $\Lambda G$ is the associated skew group algebra. Then for any $M, N\in \mathrm{mod}\mbox{-}\Lambda$, the functor $F_\lambda: \mathrm{mod}\mbox{-}\Lambda \to \mathrm{mod}\mbox{-}\Lambda G$ induces the following isomorphisms of vector spaces:
$$\mathrm{Hom}_{\Lambda G}(F_\lambda M,F_\lambda N)\approx \begin{cases}\bigoplus_{g\in G} \mathrm{Hom}_{\Lambda}(gM,N)&\mbox{ if } G_{M}\neq G \mbox{ or, }G_{N}= G;\\\bigoplus_{g\in G} \mathrm{Hom}_{\Lambda}(M,gN)&\mbox{ if }G_{N}\neq G \mbox{ or, }G_{M}= G.\end{cases}$$  
\end{theorem}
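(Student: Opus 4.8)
The plan is to realize the Galois semi-covering functor of Definition \ref{GSC} as (a refinement of) the induction functor $F_\lambda = -\otimes_\Lambda \Lambda G\colon \mathrm{mod}\mbox{-}\Lambda\to\mathrm{mod}\mbox{-}\Lambda G$, and to exploit the hypothesis that $|G|=n$ is invertible in $K$. This invertibility supplies the separability idempotent $\tfrac{1}{n}\sum_{g\in G} g^{-1}\otimes g$, so that $\Lambda\subseteq\Lambda G$ is a separable Frobenius extension; consequently $F_\lambda$ is \emph{biadjoint} to the restriction functor $\mathrm{Res}\colon \mathrm{mod}\mbox{-}\Lambda G\to\mathrm{mod}\mbox{-}\Lambda$. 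The computational core is a Mackey-type identification of the restricted induced module. Writing $\Lambda G=\bigoplus_{g\in G}\Lambda g$ as a $\Lambda$-bimodule, the right $\Lambda$-action on the summand $N\otimes g$ is twisted by $g$, whence
\[
\mathrm{Res}\,F_\lambda N\cong\bigoplus_{g\in G} gN,\qquad \mathrm{Res}\,F_\lambda M\cong\bigoplus_{g\in G} gM
\]
as $\Lambda$-modules, the first reading off the right action and the second the left.

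First I would feed these decompositions into the two adjunctions. From the induction–restriction adjunction,
\[
\mathrm{Hom}_{\Lambda G}(F_\lambda M,F_\lambda N)\cong\mathrm{Hom}_\Lambda\big(M,\mathrm{Res}\,F_\lambda N\big)\cong\bigoplus_{g\in G}\mathrm{Hom}_\Lambda(M,gN),
\]
which is the second displayed formula; dually, using that $F_\lambda$ is \emph{also} right adjoint to $\mathrm{Res}$ (the separable Frobenius structure) together with $\mathrm{Res}\,F_\lambda M\cong\bigoplus_g gM$ gives
\[
\mathrm{Hom}_{\Lambda G}(F_\lambda M,F_\lambda N)\cong\mathrm{Hom}_\Lambda\big(\mathrm{Res}\,F_\lambda M,N\big)\cong\bigoplus_{g\in G}\mathrm{Hom}_\Lambda(gM,N),
\]
the first formula. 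I note at the outset that the two target spaces are always abstractly isomorphic, since the twist-adjunction $\mathrm{Hom}_\Lambda(M,gN)\cong\mathrm{Hom}_\Lambda(g^{-1}M,N)$ together with the bijection $g\mapsto g^{-1}$ on $G$ matches the two sums term by term. Hence the genuine content of the theorem is not the existence of an abstract isomorphism but that the map \emph{induced by $F_\lambda$} — the covering-theoretic push-down $f\mapsto F_\lambda(f)$, twisted through the canonical isomorphisms $F_\lambda(gN)\cong F_\lambda N$ — realizes it, and it is here that naturality and the stabilizer hypotheses enter.

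The main obstacle is therefore to verify that each push-down map $\bigoplus_g\mathrm{Hom}_\Lambda(M,gN)\to\mathrm{Hom}_{\Lambda G}(F_\lambda M,F_\lambda N)$ is bijective, which I would do by unwinding the adjunction unit/counit and identifying it with the isomorphism above. This is routine as long as one of $M,N$ has trivial stabilizer: then $F_\lambda$ of that module is indecomposable (genuine covering behaviour), the push-down is faithful, and a dimension count closes the argument. The delicate regime is the fixed-point one, where $F_\lambda$ is only a \emph{semi}-covering: when $G_M=G$ the module $F_\lambda M$ is $G$-invariant and hence decomposes (since $n$ is invertible and $G$ is abelian, $\mathrm{End}_{\Lambda G}(F_\lambda M)$ acquires a split group-algebra factor $KG$), so the naive push-down on that side cannot account for all of $\mathrm{Hom}_{\Lambda G}(F_\lambda M,F_\lambda N)$ and that formula may genuinely fail. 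The remedy is precisely to pass to the \emph{other} side: if $G_M=G$ one invokes the second formula (its hypothesis is met) and argues via $\mathrm{Res}\,F_\lambda N$, while if $G_N=G$ one invokes the first and argues via $\mathrm{Res}\,F_\lambda M$. I expect the bulk of the work to be the bookkeeping that the four stabilizer configurations — both free, both full, and the two mixed cases — are covered exactly by the stated complementary hypotheses, so that for every pair $(M,N)$ at least one of the two push-down maps is shown to be an isomorphism.
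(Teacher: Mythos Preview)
Your approach is correct and genuinely different from the paper's. You recognise $F_\lambda$ (up to the Morita equivalence with $\bar e(\Lambda G)\bar e$) as the induction functor $-\otimes_\Lambda\Lambda G$, use separability of $\Lambda\subset\Lambda G$ to make induction biadjoint to restriction, and read off both displayed isomorphisms at once from the Mackey decomposition $\mathrm{Res}\,F_\lambda N\cong\bigoplus_{g}gN$; this works for \emph{all} $M,N$ without case analysis and in fact shows the stabiliser hypotheses are inessential to the bare statement. The paper instead argues by hand in four cases according to whether $G_M,G_N$ equal $G$: it invokes Proposition~\ref{rem} and Remark~\ref{indr} to write each of $F_\lambda M,F_\lambda N$ explicitly as either an indecomposable $\bar M$ or a character--sum $\bigoplus_{\hat g}{}^{\hat g}\bar M$, then writes down the maps $\nu_{M,N},\mu_{M,N}$ concretely (as row/column vectors or block--diagonal matrices) and closes each case by a dimension count. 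Your route is shorter and more conceptual; the paper's route, however, produces the explicit matrix description of $\nu_{M,N}$ that is reused verbatim in the proofs of Theorem~\ref{rank} and Propositions~\ref{3.7}, \ref{irrstab}, so its concreteness pays off downstream.

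Two small comments. First, you should check once that the paper's pushdown $F_\lambda$ of Definition~\ref{dgsc} really agrees with induction followed by the Morita equivalence $\mathrm{mod}\mbox{-}\Lambda G\simeq\mathrm{mod}\mbox{-}\bar e(\Lambda G)\bar e$; this is routine but is the hinge on which your whole argument turns. Second, your worry about the ``delicate fixed--point regime'' is legitimate if one insists on the \emph{naive} push--down $(f_g)_g\mapsto\sum_g F_\lambda(f_g)$ (which indeed collapses when $G_M=G$, since then all ${}^gM$ coincide), but it evaporates once you use the adjunction map itself: that map is always an isomorphism, and the theorem only asserts existence of an isomorphism induced by $F_\lambda$, which the unit/counit description certainly is. So your final paragraph can be shortened considerably.
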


Classifying a finite-dimensional associative algebra as finite, tame, or wild is a challenging problem. According to Ringel, the computation of (stable) rank could make this task easier. In \cite{SVA23}, Srivastava, Sinha, and Kuber investigate the stable rank for special biserial algebras, in particular for gentle algebras. Geiss shows that skew gentle algebras are tame \cite{G} as they degenerate to gentle algebras, but the classification problem of arbitrary skew group algebras is still open. Considering this, we demonstrate that the functor $F_\lambda$ preserves the powers of the radical. As a result, rank and stable rank (see Definition \ref{ranstab}) are preserved under the skew group algebra construction, and we obtain the possible stable ranks for skew-gentle algebras. We have the following result.

\begin{theorem}(Theorem \ref{rank} and Theorem \ref{stable})
Let $G$ be an abelian group acting on an algebra $\Lambda$ and let $F_{\lambda}: \Lambda \to \Lambda G$ be a Galois semi-covering. Then 
\begin{enumerate}
\item $F_{\lambda}$ preserve powers of radicals.
\item rank of $\Lambda$ = rank of $\Lambda G$.
\item stable rank of $\Lambda$= stable rank of $\Lambda G$.
\end{enumerate}
\end{theorem}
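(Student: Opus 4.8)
The backbone of all three parts is to promote the Hom-space isomorphism of Theorem \ref{diag} to one compatible with the radical filtration. Precisely, the plan is to show that for all $M,N\in\mathrm{mod}\mbox{-}\Lambda$ and all $n\geq 1$ the isomorphism of Theorem \ref{diag} restricts to an isomorphism
\[
\mathrm{rad}^n_{\Lambda G}(F_\lambda M,F_\lambda N)\;\cong\;\bigoplus_{g\in G}\mathrm{rad}^n_{\Lambda}(gM,N).
\]
This is the content of (1): specialising to the summand $g=e$ shows that $h\in\mathrm{rad}^n_\Lambda(M,N)$ if and only if $F_\lambda h\in\mathrm{rad}^n_{\Lambda G}(F_\lambda M,F_\lambda N)$, so $F_\lambda$ neither creates nor destroys radical depth. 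Parts (2) and (3) will then follow formally.

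First I would prove the displayed isomorphism by induction on $n$, the base case $n=1$ being the crux. Reducing to $M,N$ indecomposable, I must show that $f\colon F_\lambda M\to F_\lambda N$ lies in $\mathrm{rad}_{\Lambda G}$ exactly when each component $f_g\in\mathrm{Hom}_\Lambda(gM,N)$ is a non-isomorphism. Using the description of how $F_\lambda$ acts on indecomposables and on irreducible morphisms obtained earlier in the paper, a component $f_g$ is invertible precisely when it witnesses $gM\cong N$, and this is exactly the situation in which $f$ acquires an invertible matrix entry between summands of $F_\lambda M$ and $F_\lambda N$. The delicate case is $G_M=G$ or $G_N=G$, where $F_\lambda M$ or $F_\lambda N$ decomposes and $F_\lambda$ is only a semi-covering; there I would pass to the second form of Theorem \ref{diag} and bookkeep the $G$-action on the summands to match invertible entries with orbit data.

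For the inductive step I would use $\mathrm{rad}^{n+1}=\mathrm{rad}\cdot\mathrm{rad}^n$. Given $f\in\mathrm{rad}^{n+1}_{\Lambda G}(F_\lambda M,F_\lambda N)$, write it as a finite sum of composites of a $\mathrm{rad}^n$-map followed by a $\mathrm{rad}$-map through some object $Z$; since $F_\lambda$ is dense, $Z$ may be taken to be a direct summand of some $F_\lambda L$. Applying the induction hypothesis to each factor and transporting across the Hom-isomorphism of Theorem \ref{diag}, the factorisation lifts to $\mathrm{mod}\mbox{-}\Lambda$, yielding the required preimage in $\bigoplus_g\mathrm{rad}^{n+1}_\Lambda(gM,N)$. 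Because $G$ is finite all these direct sums are finite, so no convergence subtleties intervene; the opposite inclusion $F_\lambda(\mathrm{rad}^n_\Lambda)\subseteq\mathrm{rad}^n_{\Lambda G}$ is the functorial direction and follows from the base case.

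Granting the filtered isomorphism, part (2) is immediate: the rank is the least $n$ at which the finite radical filtration stabilises ($\mathrm{rad}^n=\mathrm{rad}^{n+1}$, equivalently $\mathrm{rad}^n=\mathrm{rad}^\infty$), and since the displayed isomorphism holds for every pair $(M,N)$ and $F_\lambda$ is dense, we have $\mathrm{rad}^n_\Lambda=\mathrm{rad}^\infty_\Lambda$ on all Hom spaces if and only if $\mathrm{rad}^n_{\Lambda G}=\mathrm{rad}^\infty_{\Lambda G}$ on all Hom spaces, so the two ranks coincide. For part (3) I would first intersect the filtered isomorphism over all $n$; as $G$ is finite the intersection commutes with the finite direct sum, giving $\mathrm{rad}^\infty_{\Lambda G}(F_\lambda M,F_\lambda N)\cong\bigoplus_g\mathrm{rad}^\infty_\Lambda(gM,N)$. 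Propagating this through the transfinite powers $(\mathrm{rad}^\infty)^\alpha$ by ordinal induction (successor steps as in the inductive argument above, limit steps by intersection) shows that the transfinite filtration stabilises at the same ordinal on both sides, whence the stable ranks agree. I expect the main obstacle to be the base case of the induction, namely controlling the fixed modules where $F_\lambda$ is genuinely only a semi-covering and $F_\lambda M$ splits, and for which the two formulas of Theorem \ref{diag} must be used in tandem rather than a single clean covering formula.
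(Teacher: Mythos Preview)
Your overall architecture---upgrade the Hom-isomorphism of Theorem \ref{diag} to a filtered isomorphism on radical powers, then read off rank and stable rank---is the same as the paper's. One terminological slip to fix: $F_\lambda$ is \emph{not} dense (the paper stresses this in Example \ref{ARQSKW}); it is only semi-dense (Definition \ref{SDF}, Corollary \ref{SDPF}). Fortunately your inductive step only uses that an arbitrary $Z\in\mathrm{mod}\mbox{-}\Lambda G$ occurs as a direct summand of some $F_\lambda L$, which is exactly the content of semi-density, so the argument survives once you correct the word.

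Where you and the paper diverge is in the mechanics of part (1). The paper does not run an induction on $n$; instead it takes $f\in\mathrm{rad}^{\alpha}(M,N)\setminus\mathrm{rad}^{\alpha+1}(M,N)$ with $M,N$ indecomposable and argues directly, splitting into the four stabiliser cases $(G_M\neq G,\,G_N\neq G)$, $(G_M=G,\,G_N\neq G)$, $(G_M\neq G,\,G_N=G)$, $(G_{MN}=G)$. In each case Theorem \ref{diag} identifies $F_\lambda f$ with a row, column, or diagonal matrix whose nonzero entries are the ${}^g f$, and then the elementary Lemma \ref{entradas} (a matrix morphism lies in $\mathrm{rad}^{\alpha}\setminus\mathrm{rad}^{\alpha+1}$ iff some entry does) finishes. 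This is shorter but leaves implicit the step you make explicit: why the entries of the $\Lambda G$-matrix, taken between indecomposable summands of $F_\lambda M$ and $F_\lambda N$, inherit their radical depth from the ${}^g f$. Your inductive lifting of factorisations through semi-density is a cleaner justification of precisely that point. For parts (2) and (3) the paper says essentially nothing beyond ``direct consequence'' and a bare statement, so your transfinite-induction sketch (intersect over $n$ for limit ordinals, reuse the successor argument) is already more detailed than what appears there.
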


A Galois semi-covering functor is not dense in general (see Example \ref{ARQSKW}). We introduce the concept of a semi-dense functor (see Definition \ref{SDF}) and show that $F_\lambda$ is semi-dense (see Corollary \ref{SDPF}). This allows us to describe the irreducible morphisms over the skew group algebra in terms of the irreducible morphisms over the given algebra. For most cases, irreducible morphisms of $\Lambda G$ can be obtained as the image of an irreducible morphism in $\Lambda$ under $F_\lambda$. Following from the previous result, we show when $F_\lambda$ preserves irreducibles. 

\begin{corollary}(Corollary \ref{irrsep})
Let $f:M\to N$ be an irreducible morphism in $\mathrm{mod}\mbox{-}\Lambda$ with $M,N \in \mbox{ind}(\Lambda)$. If $G_{M}\neq G$ or $G_{N}\neq G$, then $F_{\lambda}(f):F_{\lambda}M \to F_{\lambda}N$ is irreducible.
\end{corollary}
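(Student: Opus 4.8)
The plan is to characterise irreducibility through the radical filtration and then to transport an arbitrary factorisation across the biadjunction relating $F_{\lambda}$ to the restriction functor $U\colon \mathrm{mod}\mbox{-}\Lambda G\to \mathrm{mod}\mbox{-}\Lambda$. Recall that $h\colon A\to B$ is irreducible exactly when it is neither a split monomorphism nor a split epimorphism and, for every factorisation $h=\beta\alpha$, either $\alpha$ is a section or $\beta$ is a retraction; recall also that any irreducible morphism lies in $\mathrm{rad}_{\Lambda}(A,B)\setminus\mathrm{rad}_{\Lambda}^{2}(A,B)$. Since $f$ is irreducible between indecomposables we have $f\in \mathrm{rad}_{\Lambda}(M,N)$, and Theorem \ref{rank}(1) on preservation of radical powers immediately gives $F_{\lambda}(f)\in \mathrm{rad}_{\Lambda G}(F_{\lambda}M,F_{\lambda}N)$. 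As a radical morphism between nonzero modules can be neither a section nor a retraction, this already settles the first requirement of irreducibility.

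For the factorisation property I would take any $F_{\lambda}(f)=\beta\alpha$ with $\alpha\colon F_{\lambda}M\to Z$ and $\beta\colon Z\to F_{\lambda}N$ in $\mathrm{mod}\mbox{-}\Lambda G$ and push it down using the adjunctions underlying Theorem \ref{diag}. Since $|G|$ is invertible, $F_{\lambda}$ is both left and right adjoint to $U$, with $U F_{\lambda}M\cong\bigoplus_{g\in G} gM$ and $UF_{\lambda}N\cong\bigoplus_{g\in G} gN$, the summands $M=eM$ and $N=eN$ splitting off. Applying $U$, the restriction $UF_{\lambda}(f)$ is block diagonal with $(e,e)$-entry $f$, so composing with the inclusion $\iota\colon M\hookrightarrow UF_{\lambda}M$ and the projection $\pi\colon UF_{\lambda}N\twoheadrightarrow N$ exhibits $f=\bar{\beta}\,\bar{\alpha}$ with $\bar{\alpha}=U\alpha\circ\iota\colon M\to UZ$ and $\bar{\beta}=\pi\circ U\beta\colon UZ\to N$. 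Irreducibility of $f$ then forces $\bar{\alpha}$ to be a section or $\bar{\beta}$ to be a retraction, and the concluding step is to feed this splitting back through the isomorphisms of Theorem \ref{diag} so as to split $\alpha$ or $\beta$ over $\Lambda G$. Here the hypotheses enter symmetrically: for $G_{M}\neq G$ I would use the first form $\mathrm{Hom}_{\Lambda G}(F_{\lambda}M,F_{\lambda}N)\cong\bigoplus_{g}\mathrm{Hom}_{\Lambda}(gM,N)$ (restriction on the source), and for $G_{N}\neq G$ the second form $\cong\bigoplus_{g}\mathrm{Hom}_{\Lambda}(M,gN)$ (coinduction on the target), invoking semi-denseness (Corollary \ref{SDPF}) to reduce $Z$ to summands of push-downs on which these identifications are available.

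The main obstacle is precisely that $F_{\lambda}M$ and $F_{\lambda}N$ need not be indecomposable: a push-down is indecomposable only when the inertia group of the corresponding module is trivial, so the clean criterion ``$\mathrm{rad}\setminus\mathrm{rad}^{2}\Rightarrow$ irreducible'', valid only for indecomposable source and target, cannot be applied directly. The delicate point is therefore the lifting step, since an adjunction bijects Hom-spaces but does not by itself transport split monomorphisms: one must match the indecomposable summand of $F_{\lambda}M$ (resp.\ $F_{\lambda}N$) singled out by $f$ with a genuine direct summand of $Z$. This is exactly where the hypothesis $G_{M}\neq G$ or $G_{N}\neq G$ does its work, ruling out the doubly fixed situation $G_{M}=G=G_{N}$ in which $F_{\lambda}$ fails to reflect irreducibility and the matching collapses; under the stated hypothesis the appropriate form of Theorem \ref{diag} furnishes the precise bijection of Hom-spaces that carries the splitting across, and I expect the bulk of the argument to reside in this transfer.
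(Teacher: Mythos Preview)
You have overlooked the two observations that make the paper's proof a three-line argument, and your detour through the factorisation property carries an unresolved gap.

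First, you write that the criterion ``$\mathrm{rad}\setminus\mathrm{rad}^{2}\Rightarrow$ irreducible'' is ``valid only for indecomposable source and target''. This is not so: it is well known (and recalled in the paper just before the corollary) that $h\in\mathrm{rad}(A,B)\setminus\mathrm{rad}^{2}(A,B)$ with \emph{either} $A$ or $B$ indecomposable already forces $h$ to be irreducible. Second, you assert that ``$F_{\lambda}M$ and $F_{\lambda}N$ need not be indecomposable'', but under the standing hypothesis this is exactly what Proposition~\ref{Ind} rules out: if $G_{M}\neq G$ then $F_{\lambda}M$ is indecomposable, and likewise for $N$. Combining these with the full strength of Theorem~\ref{rank} (which places $F_{\lambda}(f)$ in $\mathrm{rad}\setminus\mathrm{rad}^{2}$, not merely in $\mathrm{rad}$ as you use it) finishes the proof immediately. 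The hypothesis $G_{M}\neq G$ or $G_{N}\neq G$ is precisely what guarantees an indecomposable end, not what is needed to ``match summands'' in a lifting argument.

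As for the route you do propose, the step you flag as delicate is genuinely incomplete: from $\bar{\alpha}=U\alpha\circ\iota$ being a section in $\mathrm{mod}\mbox{-}\Lambda$ one does not obtain that $\alpha$ is a section in $\mathrm{mod}\mbox{-}\Lambda G$ without further work, and the semi-covering isomorphisms of Theorem~\ref{diag} only apply when both arguments lie in the image of $F_{\lambda}$, whereas your intermediate $Z$ is arbitrary. Semi-denseness gives $Z$ as a summand of some $F_{\lambda}X$, but the splitting of $\bar{\alpha}$ need not respect that decomposition. This could perhaps be salvaged, but it is unnecessary once you use Proposition~\ref{Ind} and the one-sided form of the $\mathrm{rad}\setminus\mathrm{rad}^{2}$ criterion.
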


We also show how to recover the irreducible morphism of $\Lambda G$ when the above hypothesis doesn't hold.  More specifically, we establish the following result.

\begin{proposition} \label{irrstab} 
(Proposition \ref{irrstab}) Assume that $G$ acts on an algebra $\Lambda$ and $\Lambda G$ is the associated skew group algebra. Let $M,N \in \mbox{ind}(\Lambda G)$ such that $\hat{G}_{M}\neq \hat{G} $ and $\hat{G}_N \neq \hat{G}$. If $f \in  \mathrm{irr}_{\Lambda G}(M,N)$ then there exists a $f_1 \in  \mathrm{irr}_{\Lambda}(M_1,N_1)$ with $M_1,N_1 \in \mbox{ind}(\Lambda)$ such that ${}^gf$ for all $g\in G$ are the diagonal entries of the diagonal matrix $F_\lambda(f_1)$.
\end{proposition}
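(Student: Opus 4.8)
The plan is to exploit the duality between the pairs $(\Lambda,G)$ and $(\Lambda G,\hat{G})$, where $\hat{G}$ is the character group acting on $\Lambda G$ with $(\Lambda G)\hat{G}$ Morita equivalent to $\Lambda$. This symmetry produces a dual Galois semi-covering functor $F_{\hat\lambda}\colon \mathrm{mod}\mbox{-}\Lambda G\to \mathrm{mod}\mbox{-}\Lambda$ to which every result proved for $F_\lambda$ applies after exchanging the roles of $G$ and $\hat{G}$. Concretely, the dual of Theorem \ref{diag} gives, for $X,Y\in\mathrm{mod}\mbox{-}\Lambda G$, an isomorphism $\mathrm{Hom}_{\Lambda}(F_{\hat\lambda}X,F_{\hat\lambda}Y)\approx\bigoplus_{\chi\in\hat{G}}\mathrm{Hom}_{\Lambda G}({}^{\chi}X,Y)$ under the analogous stabiliser hypotheses, and the dual of Corollary \ref{irrsep} says that $F_{\hat\lambda}$ carries an irreducible morphism between indecomposables $X,Y$ with $\hat{G}_{X}\neq\hat{G}$ or $\hat{G}_{Y}\neq\hat{G}$ to an irreducible morphism. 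I shall also use the Mackey-type identity $F_\lambda\circ F_{\hat\lambda}\cong\bigoplus_{g\in G}{}^{g}(-)$ of endofunctors of $\mathrm{mod}\mbox{-}\Lambda G$ (where ${}^{g}$ denotes the twist by the character corresponding to $g$ under a fixed isomorphism $G\cong\hat{G}$), which records that skewing by $G$ and then by $\hat{G}$ returns the original algebra up to Morita equivalence.

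First I would set $M_{1}:=F_{\hat\lambda}M$, $N_{1}:=F_{\hat\lambda}N$ and $f_{1}:=F_{\hat\lambda}(f)$. Because $\hat{G}_{M}\neq\hat{G}$ and $\hat{G}_{N}\neq\hat{G}$, the dual of the indecomposability results underlying Corollary \ref{irrsep} (equivalently, locality of the endomorphism rings computed from the dual of Theorem \ref{diag}) shows that $M_{1},N_{1}\in\mbox{ind}(\Lambda)$, and the dual of Corollary \ref{irrsep} shows that $f_{1}\in\mathrm{irr}_{\Lambda}(M_{1},N_{1})$. This already produces the required irreducible morphism over $\Lambda$.

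It remains to identify $F_\lambda(f_{1})$. Applying $F_\lambda$ to $f_{1}=F_{\hat\lambda}(f)$ and invoking the Mackey identity gives $F_\lambda(f_{1})\cong\bigoplus_{g\in G}{}^{g}f$, a morphism $\bigoplus_{g\in G}{}^{g}M\to\bigoplus_{g\in G}{}^{g}N$; in particular $F_\lambda M_{1}\cong\bigoplus_{g\in G}{}^{g}M$ and $F_\lambda N_{1}\cong\bigoplus_{g\in G}{}^{g}N$, as is forced by the fact that $M_{1},N_{1}$ are $G$-invariant (the hypothesis $\hat{G}_{M}\neq\hat{G}$ places $M$ in the ``split'' regime of the correspondence, where the push-down of a $G$-invariant indecomposable decomposes into its full $\hat{G}$-orbit). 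To see that the resulting matrix is \emph{diagonal} rather than merely block-triangular, I would observe that $F_\lambda(f_{1})$, lying in the image of $F_\lambda$, is equivariant for the canonical $\hat{G}$-action carried by induced modules, while $\hat{G}$ permutes the summands ${}^{g}M$ (resp. ${}^{g}N$) by its regular action; an equivariant map between such permutation modules is necessarily diagonal, with $g$-entry equal to the $g$-twist of the $e$-entry, namely ${}^{g}f$.

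The main obstacle is the precise justification of the Mackey identity $F_\lambda\circ F_{\hat\lambda}\cong\bigoplus_{g\in G}{}^{g}(-)$ together with the bookkeeping of the $\hat{G}$-action through the Morita equivalence $(\Lambda G)\hat{G}\simeq\Lambda$, and the verification that the diagonal entries come out as the full list $\{{}^{g}f\}_{g\in G}$ with the correct multiplicities even when $\hat{G}_{M}$ is a proper but nontrivial subgroup, in which case the summands ${}^{g}M$ repeat and the $\hat{G}$-equivariance argument must be run on the isotypic blocks rather than on distinct indecomposables. Once the equivariance is pinned down blockwise, the diagonal form and the identification of the entries follow formally.
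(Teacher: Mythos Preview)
Your approach is essentially the paper's own proof. What you call $F_{\hat\lambda}$ is precisely the functor the paper denotes $G_\lambda$ (see Remark~\ref{adja}); the paper also sets $M_1:=G_\lambda M$, $N_1:=G_\lambda N$, $f_1:=G_\lambda(f)$, invokes Proposition~\ref{Ind} (in its dual form) to obtain indecomposability of $M_1,N_1$, and then reads off the diagonal matrix from the Case~IV description in Theorem~\ref{diag}/Proposition~\ref{3.7}. Your ``Mackey identity'' $F_\lambda\circ F_{\hat\lambda}\cong\bigoplus_{g}{}^{g}(-)$ is exactly the dual of Remark~\ref{adja}, and your equivariance argument for diagonality is a repackaging of what the paper obtains directly from the explicit isomorphism $\nu_{M,N}$ in Case~IV of Theorem~\ref{diag}. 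You are in fact more careful than the paper in one respect: you explicitly justify $f_1\in\mathrm{irr}_\Lambda(M_1,N_1)$ via the dual of Corollary~\ref{irrsep}, whereas the paper's proof leaves this implicit. The obstacle you flag at the end---intermediate stabilisers $\hat{G}_M$---does not actually cause trouble here: the conclusion only asserts that the list $\{{}^{g}f\}_{g\in G}$ (with repetitions allowed) appears on the diagonal, and this is exactly what the decomposition $F_\lambda\circ G_\lambda\cong\bigoplus_{g}{}^{g}(-)$ delivers without any case analysis on the size of $\hat{G}_M$.
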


Finally, we describe the almost split sequences over the skew group algebra in terms of the almost split sequences over the given algebra.

\begin{theorem}(Theorem \ref{ARS})
Suppose $\mathcal{E}$ is an almost split sequence in $\Lambda$. Then the associated almost split sequence(s) in $\Lambda G$ have the following forms:
\begin{enumerate}
\item[$G_{\mathcal{E}}=G$] Here, $F_\lambda(\mathcal{E})=\uwithtext{Z}_{k=1}^n \bar{\mathcal{E}}^k$ where $\bar{\mathcal E}^k$ are the associated almost split sequences in $\Lambda G$ for each $k$ being glued via $Z$; In particular, if $Z=0$ then $F_\lambda(\mathcal{E})=\bigoplus_{k=1}^n \bar{\mathcal{E}}^k$.

\item[$G_{\mathcal{E}}\neq G$] Here, $F_\lambda(\mathcal{E})=\bar{\mathcal{E}}$ where $\bar{\mathcal E}$ is the associated almost split sequence in $\Lambda G$.
\end{enumerate}
\end{theorem}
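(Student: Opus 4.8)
The plan is to push $\mathcal E\colon 0\to A\to B\to C\to 0$ through the exact functor $F_\lambda$ and then to recognise $F_\lambda(\mathcal E)$ as the (possibly glued) sum of almost split sequences dictated by the two cases, organising everything around the Hom-isomorphism of Theorem \ref{diag} and the biadjunction between $F_\lambda$ and the restriction functor $R\colon \mathrm{mod}\mbox{-}\Lambda G\to\mathrm{mod}\mbox{-}\Lambda$. First I would record three preliminary facts. Since $F_\lambda$ is exact, $F_\lambda(\mathcal E)$ is a short exact sequence in $\mathrm{mod}\mbox{-}\Lambda G$. Next, restriction of an induced module is the orbit sum, $R\,F_\lambda M\cong\bigoplus_{g\in G}gM$, so $R\,F_\lambda(\mathcal E)\cong\bigoplus_{g\in G}{}^g\mathcal E$; as ${}^e\mathcal E=\mathcal E$ is non-split and a direct sum of sequences splits iff each summand does, $R\,F_\lambda(\mathcal E)$, and hence $F_\lambda(\mathcal E)$, is non-split. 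Finally, each $g$ acts by an algebra automorphism and therefore sends almost split sequences to almost split sequences, whence $G_{\mathcal E}=G_C=G_A$; this is what lets the stabiliser of $\mathcal E$ govern the decomposition of its end terms.

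I would then treat the case $G_{\mathcal E}\neq G$. Here the end terms $F_\lambda A$ and $F_\lambda C$ remain indecomposable, their endomorphism algebras staying local by Theorem \ref{diag}, and by Corollary \ref{irrsep} the maps $F_\lambda f$ and $F_\lambda g$ are irreducible. I then invoke the standard criterion that a non-split short exact sequence whose two end terms are indecomposable and one of whose maps is irreducible is necessarily the almost split sequence ending at its right-hand term; combined with the uniqueness of almost split sequences this identifies $F_\lambda(\mathcal E)$ with the single sequence $\bar{\mathcal E}$.

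For the case $G_{\mathcal E}=G$ the end terms split maximally, $F_\lambda A=\bigoplus_{k=1}^{n}A_k$ and $F_\lambda C=\bigoplus_{k=1}^{n}C_k$ into $n$ pairwise non-isomorphic indecomposables, on which the dual group $\hat G$ acts transitively, hence also permutes transitively the almost split sequences $\bar{\mathcal E}^k$ ending at the $C_k$ (these exist since each $C_k$ is indecomposable non-projective). Using that $F_\lambda$ commutes with $\tau$ up to this decomposition, the summands $\{A_k\}$ of $F_\lambda A=F_\lambda\tau C$ match the translates $\{\tau C_k\}$, so projecting $F_\lambda(\mathcal E)$ onto the $C_1$-component yields a non-split sequence with indecomposable ends which by $\hat G$-invariance of the induced sequence must be $\bar{\mathcal E}^1$, and the remaining branches follow by $\hat G$-symmetry. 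The remaining task is to compare the middle term: writing $B=\bigoplus_j B^{(j)}$, every summand with full stabiliser contributes $n$ pieces distributing one to each $\bar B_k$, whereas every summand with $G_{B^{(j)}}\neq G$ induces to a module that cannot be separated among the branches and is shared by all of them. Collecting the latter gives the common module $Z=\bigoplus_{j\,:\,G_{B^{(j)}}\neq G}F_\lambda B^{(j)}$ along which the branches are glued, so that $F_\lambda(\mathcal E)=\uwithtext{Z}_{k=1}^{n}\bar{\mathcal E}^k$; and $Z=0$ precisely when every middle summand has full stabiliser, giving the honest direct sum $\bigoplus_{k=1}^{n}\bar{\mathcal E}^k$.

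The hard part will be this last case: pinning down the shared module $Z$ and proving that the projected branches are genuinely almost split rather than merely non-split with indecomposable ends, since Corollary \ref{irrsep} does not apply when both end terms have full stabiliser. I would control the multiplicities of the summands of $F_\lambda B$ through the endomorphism-ring computation $\mathrm{End}_{\Lambda G}(F_\lambda B)\cong\mathrm{End}_\Lambda(B)G_B$ furnished by Theorem \ref{diag}, reduce the almost split verification to a single branch via the transitive $\hat G$-action (the remaining branches being $\hat G$-conjugate, in the regime of Proposition \ref{irrstab}), and use the compatibility of $F_\lambda$ with $\tau$ together with the uniqueness of almost split sequences to pin each $\bar{\mathcal E}^k$ down. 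The non-splitting from the restriction argument then completes the identification in both cases.
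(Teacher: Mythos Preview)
Your overall strategy is sensible, but there is a genuine gap in the case $G_{\mathcal E}\neq G$: the ``standard criterion'' you invoke is false. A non-split short exact sequence with indecomposable ends and one irreducible map need not be almost split. Over the Kronecker quiver, take one of the two irreducible embeddings $P_2\hookrightarrow P_1$; its cokernel is a regular simple $R$, and $0\to P_2\to P_1\to R\to 0$ is non-split with indecomposable ends and irreducible left map, yet it is not almost split (the almost split sequence ending at $R$ starts at $\tau R\cong R$, not $P_2$). Irreducibility of $f$ only forces the middle term to be a \emph{summand} of the middle of the almost split sequence starting at $A$, not the whole thing. What you actually need is that $F_\lambda f$ is \emph{minimal left almost split}; the adjunction you mention at the outset (essentially the Reiten--Riedtmann argument) delivers this directly, whereas Corollary \ref{irrsep} does not. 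The same defect recurs in your $G_{\mathcal E}=G$ case: ``projecting onto the $C_1$-component yields a non-split sequence with indecomposable ends'' does not by itself identify the branch with $\bar{\mathcal E}^1$, and when $Z\neq 0$ there is no honest direct-sum projection to take.

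The paper's route is different and more concrete. Rather than appealing to an abstract almost-split criterion, it works summand by summand at the level of irreducible morphisms. For each indecomposable piece ${}^{\hat g_k}\bar M$ of $F_\lambda M$ it uses Remark \ref{irrcomp} to see that the components of $F_\lambda f$ landing in $\sum_j\bar N_j\oplus\sum_i{}^{\hat g_k}\bar N_i$ are irreducible, and then argues there is no further irreducible morphism out of ${}^{\hat g_k}\bar M$: any such would, under $G_\lambda$, produce an irreducible out of $M$ not already appearing in $\mathcal E$, contradicting that $\mathcal E$ is almost split. This exhausts the source of the left almost split map at ${}^{\hat g_k}\bar M$, so each branch $\bar{\mathcal E}^k$ is almost split; the shared $Z=\sum_j\bar N_j$ is exactly the image of the middle summands $N_j$ with $G_{N_j}\neq G$, matching your description. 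The case $G_{\mathcal E}\neq G$ is then handled by swapping the roles of $(\Lambda,G)$ and $(\Lambda G,\hat G)$ and invoking Lemma \ref{ARS} in the other direction.
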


The paper is organized as follows. In Section $2$, we present some basics about skew group algebra and introduce the concept of Galois semi-covering (Definition \ref{GSC}). We describe the quiver of a skew group algebra and end with Proposition \ref{semcov}, which demonstrates a Galois semi-covering functor $F$ between the bounded linear algebras $\Lambda$ and $\Lambda G$. In Section $3$, we develop the pushdown functor of $F$ in the module category and establish a Galois semi-covering functor $F_\lambda$ from $\mathrm{mod}\mbox{-}\Lambda$ to $\mathrm{mod}\mbox{-}\Lambda G$ (Theorem \ref{diag}). Proposition \ref{rem} states that indecomposable modules are not necessarily preserved under a Galois semi-covering functor, but this preservation holds under some hypotheses given in Proposition \ref{Ind}. Since a Galois semi-covering functor does not hold the dense property (see Example \ref{ARQSKW}), we introduce the semi-dense property of a functor at the end of this section (see Definition \ref{SDF}) and show that $F_\lambda$ is semi-dense (Corollary \ref{SDPF}), which is essential to describe the indecomposable modules in $\mathrm{mod}\mbox{-}\Lambda G$. In Section $4$, We show that the (stable) rank is preserved under the skew group algebra construction (see Theorems \ref{rank} and \ref{stable}) to determine the stable rank of a skew gentle algebra $\Lambda$ with at least one band when the characteristic of $K$ is different from $2$ and that lies in $\{\omega,\omega+1,\omega+2\}$ (see Corollary \ref{SRA}). In section $5$, we investigate the irreducible morphisms of a skew group algebra (Theorem \ref{irrars}) and obtain a complete description by applying the semi-dense property of $F_\lambda$. In contrast to a Galois covering, we observe that a Galois semi-covering does not preserve the irreducible morphisms (Example \ref{irrnp}). We end the section describing the almost split sequences of $\Lambda G$ (see Theorem \ref{ARS}).

\subsection*{Acknowledgements}
The authors were supported by PICT 2021-01154 ANPCyT. The first author had a stay at the CEMIM-Universidad Nacional de Mar del Plata, supported by a postdoctoral scholarship from CONICET.

\section{Galois semi-covering between an algebra and its skew group algebra}
In this section, we introduce a Galois semi-covering functor between two linear categories and establish such a functor between an algebra and its skew group algebra.

\subsection{Galois semi-covering functor in a linear category}
\begin{definition}[Skew group algebras]
Let $G$ be a finite group acting on an algebra $\Lambda$ by automorphisms. The skew group algebra $\Lambda G$ is the algebra defined by:
\begin{enumerate}
    \item its underlying vector space is $\Lambda \otimes_K KG$;
    \item multiplication is given by $(\lambda \otimes g)(\mu \otimes h) = \lambda g(\mu) \otimes gh$ for $\lambda,\mu\in\Lambda$ and $g,h\in G$, extended by linearity and distributivity.
\end{enumerate}
\end{definition}

There is a natural algebra monomorphism $\Lambda\hookrightarrow\Lambda G$ given by $\lambda \to \lambda\otimes1$. Notice that the algebra $\Lambda G$ is not basic in general.


\begin{notation}
Assume that $Q=(Q_0,Q_1,s,t)$ is a finite quiver, where $Q_0$ is the set of vertices, $Q_1$ is the set of arrows and $s,t: Q_1\to Q_0$ denote the source and target functions. Fix $\tilde I$ to be a set of representatives of $Q_0$ under the action of G. This choice affects the rest of the paper as it corresponds to choosing an idempotent subalgebra of $(KQ)G$ which is Morita equivalent to $(KQ)G$. We denote the elements of $\tilde I$, for instance, $i_0$ and $j_0$. Set $G_{i_0}$ as the stabilizer of $i_0$ in $G$ and $G_{i_0j_0}:=G_{i_0}\cap G_{j_0}$. The space generated by the arrows from vertex $i$ to vertex $j$ is denoted by $\Lambda_1(i,j)$. We write $\Lambda_1$ for the arrow space of $Q$ and $\Lambda_1G\subseteq \mathcal (KQ)G$ for the space generated by elements of the form $\lambda \otimes g$ with $\lambda \in \Lambda_1$ and $g \in G$. Denote the $G$-orbit of a path $p$ by $O_p$ and define $\tilde {Q_0}':=\{i \in Q_0\mid |O_{i}|=|G|\}$. Moreover, $\mbox{ind}(\Lambda)$ denotes the isomorphism classes of indecomposable modules over $\Lambda$, whereas by $\mathrm{irr}_{\Lambda}(M,N)$, we mean the collection of all irreducible morphisms between two $\Lambda$-modules $M$ and $N$.
\end{notation}

\begin{assumption}
Let us fix a finite abelian group $G$ of order $n$ acting on $KQ$ such that $G_{i_0}\neq G$ implies $|O_{i_0}|=n$ for each $i_0\in \tilde I$. Recall that the set of all irreducible representations of $G$, denoted $\hat{G}$, forms a group w.r.t. tensor product of representations with $|\hat{G}|=n$.
\end{assumption}

The following remark is essential to figure out the arrow set for skew group algebras.
\begin{rmk}\label{arrowpreservation} Since $G$ acts by automorphisms, the action preserves the natural grading on $KQ$ by the length of paths. Suppose $i,j \in Q_0$.
\begin{enumerate}
\item Let $V$ be the $G$-orbit of $\Lambda_1(i,j)$. Then, by \cite[Lemma~3.1]{GiPa19}, there is a basis for $V$ such that $G$ maps arrows in $V$ to multiples of arrows, more specifically, each arrow $a:i\to j$ corresponds $\chi_a \in \hat{G}_{ij}$ such that for every $g\in G_{ij}$ we have $g(a)= \chi_a(g)a$. We can repeat to generalize this for every orbit in $Q_0 \times Q_0$ which is weaker than our initial assumption that $G$ preserves the arrowspans;

\item If $i$ is a source (or a sink), then $g(i)$ is also a source (or a sink);

\item Indegree (or outdegree) of $i$ is equal to the indegree (or outdegree) of $g(i)$.
\end{enumerate}
\end{rmk} 

In contrast to a Galois covering of a linear category \cite{BoGa81}, the action of the group $G$ is not necessarily free in skew group algebras. The following remark states that these two constructions are essentially the same in the case of a free $G$-action. 

\begin{theorem}\label{Jose}\cite[Corollary~5.3]{Jose83}
Suppose a group $G$ acts on an algebra $\Lambda$ of finite representation type and $\Lambda G$ is the associated skew group algebra. If $G$ acts freely and $\tilde{F}:\tilde{\Lambda} \to \Lambda$ is a Galois $G$-covering then $\Lambda G$ is Morita equivalent to $\tilde{\Lambda}$.
\end{theorem}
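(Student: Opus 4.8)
The plan is to produce the Morita equivalence by exhibiting an explicit full idempotent $e\in\Lambda G$ whose corner algebra $e(\Lambda G)e$ is isomorphic to the orbit algebra $\tilde\Lambda\cong\Lambda/G$. Write $\Lambda=KQ/I$ with a complete set of primitive orthogonal idempotents $\{e_i\}_{i\in Q_0}$ permuted by the action, so that $g(e_i)=e_{gi}$. Since the action is free, every $G$-orbit in $Q_0$ has exactly $n=|G|$ elements; I would fix a set $\tilde I$ of orbit representatives and set
$$e=\sum_{i_0\in\tilde I}e_{i_0}\otimes 1\in\Lambda G,$$
which is an idempotent because the $e_{i_0}\otimes1$ are orthogonal idempotents.

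First I would check that $e$ is full, i.e. $(\Lambda G)\,e\,(\Lambda G)=\Lambda G$, which gives a Morita equivalence between $\Lambda G$ and $e(\Lambda G)e$. The key observation is that each $1\otimes g$ is a unit of $\Lambda G$ with inverse $1\otimes g^{-1}$, and a direct computation yields $(1\otimes g)(e_{i_0}\otimes1)(1\otimes g^{-1})=e_{gi_0}\otimes1$. By freeness each vertex $i\in Q_0$ is uniquely of the form $gi_0$, so every $e_i\otimes1$ is conjugate to a summand of $e$ and hence lies in $(\Lambda G)e(\Lambda G)$; summing over $i\in Q_0$ shows $1\otimes1\in(\Lambda G)e(\Lambda G)$, as required.

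Next I would compute the corner algebra blockwise. Using $g(e_{i_0})=e_{gi_0}$ one gets
$$(e_{j_0}\otimes1)(\lambda\otimes g)(e_{i_0}\otimes1)=e_{j_0}\lambda\,e_{gi_0}\otimes g,$$
so the $(j_0,i_0)$-block of $e(\Lambda G)e$ is $\bigoplus_{g\in G}\big(e_{j_0}\Lambda e_{gi_0}\big)\otimes g$, which is precisely the morphism space $\mathrm{Hom}_{\Lambda/G}(i_0,j_0)=\bigoplus_{g\in G}e_{j_0}\Lambda e_{gi_0}$ of the orbit category, the factor $\otimes g$ recording the sheet of the covering. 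The main obstacle is then the verification that the induced $K$-linear bijection $e(\Lambda G)e\to\tilde\Lambda$ respects multiplication: in the corner one has $(v\otimes h)(u\otimes g)=v\,h(u)\otimes hg$, and one must show this agrees with the composition law of $\Lambda/G$, in which the deck transformation $h$ is applied to $u$ exactly so as to place it on the sheet where it composes with $v$. Establishing that the $G$-twist built into the skew multiplication coincides with the twist defining composition in the orbit category is the heart of the argument, and freeness is used crucially here (and above): with nontrivial stabilizers the orbit sizes drop, $e$ fails to be full in the required sense, and extra group-algebra factors appear — exactly the semi-covering phenomenon studied in the rest of the paper.

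Recalling finally that under the free action $\Lambda$ is a Galois $G$-covering of $\Lambda/G$, the identification $e(\Lambda G)e\cong\Lambda/G\cong\tilde\Lambda$ combined with fullness of $e$ yields $\Lambda G\sim_{\mathrm{Morita}}\tilde\Lambda$. A more conceptual alternative would use the earlier Hom-formula (Theorem \ref{diag}): since $|G|$ is invertible, $\mathrm{mod}\,\Lambda G$ is equivalent to the category $\mathrm{mod}^G\Lambda$ of $G$-equivariant $\Lambda$-modules, and for a free action Galois descent identifies $\mathrm{mod}^G\Lambda$ with $\mathrm{mod}(\Lambda/G)=\mathrm{mod}\,\tilde\Lambda$; the composite realizes $F_\lambda$ as the covering push-down and produces the same Morita equivalence.
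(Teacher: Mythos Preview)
The paper does not prove this statement: it is quoted verbatim as \cite[Corollary~5.3]{Jose83} and used only to motivate the definition of a Galois semi-covering. There is therefore no proof in the paper to compare your attempt against.

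That said, your argument is the standard one and is correct in substance. In fact the idempotent you write down, $e=\sum_{i_0\in\tilde I}e_{i_0}\otimes1$, is exactly the paper's $\bar e$ from Section~2.2 specialized to the free case (trivial stabilizers force $\hat G_{i_0}=\{tr\}$ and $e_{i_0\rho}=e_{i_0}\otimes1$), and your blockwise computation of $e(\Lambda G)e$ is the free-action instance of Proposition~\ref{semcov}. The fullness argument via conjugation by the units $1\otimes g$ is clean and is precisely where freeness enters.

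One caution on matching objects to the statement: as written, $\tilde F:\tilde\Lambda\to\Lambda$ being a Galois $G$-covering makes $\tilde\Lambda$ the \emph{cover} and $\Lambda\cong\tilde\Lambda/G$ the base, with $G$ acting on $\tilde\Lambda$. Your argument instead takes $G$ acting on $\Lambda$ and concludes $\Lambda G\sim_{\mathrm{Morita}}\Lambda/G$, then silently identifies $\Lambda/G$ with $\tilde\Lambda$. That reverses the arrow. Either the theorem as transcribed here has the roles of $\Lambda$ and $\tilde\Lambda$ swapped (quite possible given the surrounding text, where $F:\Lambda\to\Lambda G$ is treated as the analogue of a covering projection), or you should relabel so that your ``$\Lambda$'' is the statement's $\tilde\Lambda$. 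The mathematics is fine; just be sure the symbols line up with the hypotheses before declaring the identification $\Lambda/G\cong\tilde\Lambda$.
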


We verified Theorem \ref{Jose} for infinite representation type algebras as well, which motivates us to introduce the Galois semi-covering to explain skewness. 

\begin{definition}[Galois semi-covering functor]\label{GSC}
Let $A,B$ be linear categories with $G$ a group acting on $A$. A functor $F: A\to B$ is called a Galois semi-covering functor if for any $X,Y\in \mathrm{Ob}(A)$, the following hold:
$$B(FX,FY)\approx \begin{cases}\bigoplus_{g\in G} A(gX,Y)&\mbox{if } G_{X}\neq G \mbox{ or, } G_{Y}= G;\\\bigoplus_{g\in G} A(X,gY)&\mbox{if }G_{Y}\neq G \mbox{ or, } G_{X}= G.\end{cases}$$  
\end{definition}

\begin{rmk}
Note that if a functor $F: A\to B$ between two linear categories $A$ and $B$ satisfy the following assertion: 
$$B(FX,FY)\approx \begin{cases}\bigoplus_{g\in G} A(gX,Y)&\mbox{if } G_{X}\neq G;\\\bigoplus_{g\in G} A(X,gY)&\mbox{if }G_{Y}\neq G;\\A^{|G|}(X,Y)&\mbox{ if } G_{XY}= G,\end{cases}$$  
for any $X,Y\in \mathrm{Ob}(A)$, where $A^{|G|}(X,Y)=\bigoplus_{g\in G} A(X,Y)$, then it's a Galois semi-covering functor. Clearly if $G_{XY}=G$, then $B(FX,FY)\approx A^{|G|}(X,Y)\approx \bigoplus_{g\in G} A(gX,Y)\approx \bigoplus_{g\in G} A(X,gY)$. Throughout this paper, we follow this equivalent definition for the Galois semi-covering.
\end{rmk}

\subsection{Galois semi-covering functor between $\Lambda$ and $\Lambda G$}
We describe an idempotent $\bar e\in \Lambda G$ following \cite{ReRi85} such that $\bar e(\Lambda G)\bar e$ is basic and Morita equivalent to $\Lambda G$. We decompose $\bar e$ as a sum of primitive orthogonal idempotents to label the vertices of $Q_G$ and elements in $\bar e(\Lambda G)\bar e$ chosen to be the arrows. 

\noindent{Vertex Set $(Q_{G_0})$:}
The vertices of $Q_G$ (\cite{Dem10}) are given by
$$Q_{G_0}=\{(i_0, \rho)\mid i_0 \in \tilde I, \rho \in \hat{G}_{i_0}\}.$$
The idempotent of $(KQ)G$ corresponding to the vertex $(i_0, \rho)$ is $$e_{i_0\rho} = i_0\otimes e_\rho \mbox{, where } e_\rho = \frac{1}{|G_{i_0}|} \sum_{g\in G_{i_0}} \rho(g)g$$ is an idempotent of $KG_{i_0}$. Write $\tilde I = \tilde I'\sqcup \tilde I''$ where $\tilde I':=\tilde I\cap \tilde {Q_0}'$.

For each vertex $i_0\in \tilde I'$, $\hat{G}_{i_0} (:=\{tr\})$ is trivial, and hence, $e_{i_0}:= e_{i_0 tr} = i_0\otimes 1$ is the associated idempotent in $\Lambda G$. Consider the idempotent of $KQ_G$
$$\bar e =
\sum_{i_0\in \tilde I} \bar {e_{i_0}} \mbox{ where } \bar {e_{i_0}} = \sum_{\rho \in \hat{G}_{i_0}}e_{i_0\rho}.$$

\begin{rmk}
The idempotent $\bar e$ is such that $\bar e(KQ)G\bar e$ is basic and Morita equivalent to $(KQ)G$. Moreover, there is an explicit isomorphism $KQ_G \to \bar e(KQ)G\bar e$, see \cite{GiPa19}. Although the construction of $Q_G$ depends on the choice of $\tilde I$ (equivalently of $\bar e$), it will lead to isomorphic quivers.
\end{rmk}

\noindent{Arrow Set $(Q_{G_1})$:}
Following \cite{GiPa19}, here we fix the arrow set $Q_{G_1}$. For each $i\in Q_0$, choose an element $\kappa_i \in G$ such that $\kappa_i(i) \in \tilde I$. Fix $\kappa_{i_0} = 1$ for each $i_0 \in \tilde I$. For each $i_0, j_0 \in \tilde I$, choose a set $R_{i_0j_0}$ of representatives of $O_{i_0}$ under the action of $G_{j_0}$ and set
$$D(i_0, j_0) = \{a : i \to j_0 \in \mathcal Q_1, i \in R_{i_0j_0}\}.$$

The set of arrows in $Q_G$ from $(i_0, \rho)$ to $(j_0, \sigma)$ is in bijection with the set
$$\{a \in D(i_0, j_0) \mid \rho|G_{i_0j_0} = \sigma|G_{i_0j_0}\chi_a \}.$$   

\begin{example}\label{SKA}
Here, we compute the skew group algebra $\bar{\Lambda}:=\Lambda\mathbb{Z}_2$ of an algebra $\Lambda$ where the action of $\mathbb{Z}_2:=\{e,\tau\}$ on $\Lambda$ is given by $\tau(v_i)=v'_i$ for $i=1,2$ and $\tau(v_i)=v_i$ for $i=3,4$. Indeed, $\Lambda$ is also Morita equivalent to $\bar{\Lambda}\mathbb{Z}_2$ if we consider the action $\tau(\bar v_i)=\bar v_i$ for $i=1,2$ and $\tau(\bar v_i)=\bar v'_i$ for $i=3,4$.
\begin{figure}[h]
\begin{minipage}[b]{0.45\linewidth}
\centering
\begin{tikzcd}[sep={3.4em,between origins}]
v_1 \arrow[r, "\alpha"] \arrow[rdd, "\beta'"', shift right]  & v_2 \arrow[rd, "\gamma"]   &                         &     \\
                                                             &                            & v_3 \arrow[r, "\delta"] & v_4 \\
v'_1 \arrow[r, "\alpha'"] \arrow[ruu, "\beta"', shift right] & v'_2 \arrow[ru, "\gamma'"] &                         &    
\end{tikzcd}
\caption{$\Lambda$ with $\rho= \{\gamma\alpha+\gamma'\beta, \gamma\beta'+\gamma'\alpha'\}$}
\label{fig:my_label}
\end{minipage}
\hspace{0.45cm}
\begin{minipage}[b]{0.5\linewidth}
\centering
\begin{tikzcd}[sep={3.4em,between origins}]
                                                                                      &                                                              & \bar v_3 \arrow[r, "\bar\delta"]   & \bar v_4  \\
\bar v_1 \arrow[r, "\bar\beta"', bend right=49] \arrow[r, "\bar\alpha", bend left=49] & \bar v_2 \arrow[ru, "\bar\gamma"] \arrow[rd, "\bar\gamma'"'] &                                    &           \\
                                                                                      &                                                              & \bar v'_3 \arrow[r, "\bar\delta'"] & \bar v'_4
\end{tikzcd}
    \caption{$\Lambda'$ with $\rho= \{\bar\gamma'\bar\alpha, \bar\gamma\bar\beta\}$}
    \label{fig:my_label}
    \end{minipage}
\end{figure}
\end{example}

The following remark deals with the interplay between the arrow space of the underlying quivers of an algebra and its skew group algebra.

\begin{rmk}\label{arrowcorrespondence}
Here, we fix a basis of $Q_{G_1}$. Let $\beta\in Q_1$. There are four different cases.

\begin{enumerate}
\item If $s(\beta), t(\beta)\in \tilde {Q_0}'$, then there is exactly one arrow $\alpha$ in $O_\beta$ of the form $\alpha: g^ti_0 \to j_0$, with $i_0, j_0 \in \tilde I'$ and $0 \leq t \leq n-1$. Define $\tilde \alpha \in \bar e\Lambda G\bar e$ by $\tilde \alpha := \alpha \otimes g^t: e_{i_0} \to e_{j_0} \in Q_{G_1}$. 

\item If $s(\beta)\in \tilde {Q_0}'$ but $t(\beta)\notin \tilde {Q_0}'$, then there is exactly one arrow $\alpha$ in $O_\beta$ of the form $\alpha: i_0 \to j_0$, with $i_0 \in \tilde I', j_0 \in \tilde I''$. Then define $\tilde \alpha^\sigma \in \bar e\Lambda G\bar e$ by $\tilde \alpha^\sigma:= (1 \otimes e_\sigma)(\alpha \otimes 1):e_{i_0}\to e_{j_0 \sigma}\in Q_{G_1}$ for $\sigma\in \hat{G}_{j_0}$.

\item If $s(\beta)\notin \tilde {Q_0}'$ but $t(\beta)\in \tilde {Q_0}'$, then there is exactly one arrow $\alpha$ in $O_\beta$ of the form $\alpha: i_0 \to j_0$, with $i_0 \in \tilde I'', j_0 \in \tilde I'$. Then define $\tilde \alpha^\rho \in \bar e\Lambda G\bar e$ by $\tilde \alpha^\rho:= \alpha \otimes e_\rho:e_{i_0 \rho}\to e_{j_0}\in Q_{G_1}$ for $\rho\in \hat{G}_{i_0}$.

\item If $s(\beta), t(\beta)\notin \tilde {Q_0}'$, then $\beta$ is of the form $\beta: i_0 \to j_0$, with $i_0, j_0 \in \tilde I''$. By Remark \ref{arrowpreservation}, we have $g(\beta)= \chi_\beta(g) \beta$. Then define $\tilde \beta^\rho \in \bar e\Lambda G\bar e$ by $\tilde \beta^\rho:= \beta \otimes e_\rho:e_{i_0 \rho}\to e_{j_0 \rho\chi^{-1}_a(g)}\in Q_{G_1}$ for $\rho\in \hat{G}_{i_0}$.
\end{enumerate}
\end{rmk}

The following example explains different cases of the above remark.
\begin{example}\label{repcl}
Consider the algebras $\Lambda$ and $\bar{\Lambda}$ from Example \ref{SKA}. Let us fix $\tilde I_\Lambda'=\{v_1,v'_2\}$, $\tilde I_\Lambda''=\{v_3,v_4\}$, $\tilde I'_{\bar{\Lambda}}=\{\bar v'_3,\bar v_4\}$, $\tilde I''_{\bar{\Lambda}}=\{\bar v_1,\bar v_2\}$.

\noindent{\textbf{Case 1:}} Here, $\beta, \beta', \alpha, \alpha'$ in $\Lambda$ belong to this case. But for $\beta$ and $\beta'$, the only representative in $O_\beta$ is $v_1\xrightarrow{\beta} v'_2$ itself, whereas for $\alpha$ and $\alpha'$, the only representative in $O_\alpha$ is $\tau v_1=v'_1\xrightarrow{\alpha'} v'_2$. So, we consider the arrows $e_{v_1}=\bar v_1 \doublerightarrow{\tilde\alpha'=\bar \alpha}{\tilde \beta=\bar\beta} \bar v_2= e_{v_2}$ in $Q_{\bar{\Lambda}}$.

\noindent{\textbf{Case 2:}} Here, $\gamma, \gamma'$ in $\Lambda$ belong to this case. The only representative in $O_\gamma$ is $v'_2\xrightarrow{\gamma'} v_3$. So we consider the arrows $e_{v'_2}=\bar v_2 \xrightarrow{\tilde \gamma'^e=\bar\gamma} \bar v_3=e_{v_3 e}$ and $e_{v'_2}=\bar v_2 \xrightarrow{\tilde\gamma'^\tau=\bar\gamma'} \bar v'_3=e_{v_3 \tau}$ in $Q_{\bar{\Lambda}}$.

\noindent{\textbf{Case 3:}} Here, $\bar\gamma, \bar\gamma'$ in $\bar{\Lambda}$ belong to this case. The only representative is $\bar v_2\xrightarrow{\bar\gamma'} \bar v'_3$ in $O_{\bar\gamma}$. We consider the arrows $e_{\bar v_2 e}= v_2 \xrightarrow{\tilde {\bar\gamma'}^e=\gamma} v_3=e_{\bar v_3}$ and $e_{\bar v_2 \tau}=v'_2 \xrightarrow{\tilde{\bar \gamma'}^\tau=\gamma'} v_3=e_{\bar v_3}$ in $Q_{\Lambda}$.

\noindent{\textbf{Case 4:}} Here, $\bar\alpha, \bar\beta$ in $\bar{\Lambda}$ belong to this case. Let us fix $\chi_{\bar\alpha}=e,\chi_{\bar\beta}=\tau$. Then we have the arrows $e_{\bar v_1 e}=v_1 \xrightarrow{\tilde{\bar \alpha}^e=\alpha} v_2=e_{\bar v_2 e}$, $e_{\bar v_1 \tau}=v'_1 \xrightarrow{\tilde{\bar \alpha}^\tau=\alpha'} v'_2=e_{\bar v_2 \tau}$, $e_{\bar v_1 e}=v_1 \xrightarrow{\tilde{\bar \beta}^e=\beta} v'_2= e_{\bar v_2 \tau}$, $e_{\bar v_1 \tau}=v'_1 \xrightarrow{\tilde{\bar \beta}^\tau=\beta'} v_2=e_{\bar v_2 e}$ in $Q_{\Lambda}$.
\end{example}

Below, we demonstrate a Galois semi-covering functor between an algebra and its skew group algebra. Later, we consider its pushdown functor, which becomes the Galois semi-covering functor between their module categories.

Consider a functor $F: KQ \to (KQ)G$ by setting for each $i\in Q_0$, $$F(i)=\bar {e_{i_0}} \mbox{ where, } i_0\in O_i\cap \tilde I.$$

Note that $F(\alpha)$ is determined by its endpoints for each $\alpha\in Q_1$. The next proposition, which follows from Remark \ref{arrowcorrespondence}, ensures the existence of a Galois semi-covering.  
\begin{proposition}\label{semcov}
$\Lambda_1G(F(i), F(j))\approx \begin{cases}\bigoplus_{g\in G} \Lambda_1(gi,j)&\mbox{ if } G_{i}\neq G;\\\bigoplus_{g\in G} \Lambda_1(i,gj)&\mbox{if }G_{j}\neq G;\\\Lambda_1^{|G|}(i,j)&\mbox{ if } G_{ij}= G.\end{cases}$   

In particular, $F$ produces a Galois semi-covering functor between $\Lambda_1$ and $\Lambda_1 G$. 
\end{proposition}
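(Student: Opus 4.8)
My plan is to prove Proposition~\ref{semcov} by unwinding the definition of the functor $F$ and directly computing the Hom-space $\Lambda_1 G(F(i),F(j)) = \bar e_{j_0}(\Lambda_1 G)\bar e_{i_0}$ (the arrow space in the skew algebra between the relevant idempotents), then matching this computation against the three cases of the claimed formula using the explicit arrow correspondence of Remark~\ref{arrowcorrespondence}. Since $F(i) = \bar e_{i_0}$ where $i_0 \in O_i \cap \tilde I$ is the chosen orbit representative, the morphism space $\Lambda_1 G(F(i),F(j))$ is by definition spanned by the arrows of $Q_G$ from $(i_0,\rho)$ to $(j_0,\sigma)$, ranging over all $\rho \in \hat G_{i_0}$, $\sigma \in \hat G_{j_0}$. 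The whole point is that the arrow set of $Q_G$ was described combinatorially in the Arrow Set paragraph, so the proof is a bookkeeping argument reconciling that description with the direct-sum decompositions indexed by $G$.

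First I would fix the three cases according to the stabilizer conditions. When $G_i \neq G$, Assumption forces $|O_i| = n$, so $i_0 \in \tilde I'$ and $\hat G_{i_0}$ is trivial; then $F(i) = e_{i_0}$ is a single idempotent, and the arrows out of $(i_0, tr)$ are governed by Cases~1 and 2 of Remark~\ref{arrowcorrespondence}. In this situation I would show that each arrow $\alpha \in O_\beta$ of the form $\alpha\colon g^t i_0 \to j$ contributes exactly one summand matching a generator of $\Lambda_1(g^t i, j)$, and that summing over $t$ (equivalently over $g \in G$, since the orbit has full size $n$) yields $\bigoplus_{g \in G}\Lambda_1(gi,j)$. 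The symmetric case $G_j \neq G$ is handled dually by Cases~1 and 3, using $j_0 \in \tilde I'$ and reading the correspondence from the target side. The third case $G_{ij} = G$ is the degenerate one where both stabilizers are everything, so $i_0,j_0 \in \tilde I''$ and Case~4 applies: here $g(\beta) = \chi_\beta(g)\beta$ and the arrows of $Q_G$ are indexed by $\rho \in \hat G_{i_0}$, giving $|\hat G_{i_0}| = |G| = n$ parallel copies, which is precisely $\Lambda_1^{|G|}(i,j)$.

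The key technical step, and the one I expect to be the main obstacle, is verifying the bijection in Case~4 between the arrows $(i_0,\rho) \to (j_0,\sigma)$ satisfying the character condition $\rho|_{G_{i_0 j_0}} = \sigma|_{G_{i_0 j_0}}\chi_\beta$ and the $n$ summands of $\Lambda_1^{|G|}(i,j)$; one must check that for each $\rho \in \hat G_{i_0}$ exactly one compatible $\sigma$ arises (namely $\sigma = \rho\chi_\beta^{-1}$, as recorded by the target $e_{j_0\,\rho\chi_a^{-1}(g)}$ in Remark~\ref{arrowcorrespondence}(4)), so that no arrows are double-counted or omitted. The counting of orbit representatives, i.e.\ that $|R_{i_0 j_0}| \cdot |G_{j_0}|$ reproduces exactly $|O_{i_0}| = n$ terms in Cases~1--3, requires the orbit-stabilizer relation and Assumption to avoid overcounting when $j_0$ has nontrivial stabilizer.

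Finally I would assemble the three isomorphisms into the statement, noting that they are $K$-linear since $F$ acts linearly on arrow spans and the idempotents $e_\rho$ are $K$-linear combinations of group elements. The concluding sentence, that $F$ ``produces a Galois semi-covering functor between $\Lambda_1$ and $\Lambda_1 G$,'' then follows by comparing the displayed formula with the equivalent definition given in the Remark after Definition~\ref{GSC}, since the three cases here match the three cases there verbatim. I would remark that the identification is functorial, i.e.\ compatible with composition, because $F$ is defined on the whole path algebra and the correspondence of Remark~\ref{arrowcorrespondence} respects the grading by path length established in Remark~\ref{arrowpreservation}; this makes the arrow-level isomorphism extend to the desired semi-covering of linear categories.
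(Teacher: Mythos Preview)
Your proposal is correct and follows exactly the approach the paper indicates: the paper states that Proposition~\ref{semcov} ``follows from Remark~\ref{arrowcorrespondence}'' without further detail, and your plan is precisely to unpack the four cases of that remark and match them against the three stabilizer cases of the statement. Your dimension counts and the orbit/character bookkeeping you outline are the right verifications, so this is just a more explicit version of the paper's one-line proof.
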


\begin{rmk}
Note that the Galois semi-covering functor between $\Lambda_1$ and $\Lambda_1 G$ can be extended to a Galois semi-covering functor between the bounded algebras.   
\end{rmk}

\begin{example}
Consider the algebras $\Lambda$ and $\bar{\Lambda}$ from Example \ref{SKA}. Here, $G_{v_1}, G_{v_2}\neq G$, $F(v_1)=\bar {e_{v_1}}=\bar v_1$ and $F(v_2)=\bar {e_{v'_2}}=\bar v_2$. Clearly, $\bar{\Lambda}_1(F(v_1), F(v_2))$ and $\bigoplus_{g\in G} \Lambda_1(gv_1,v_2)$ both have dimensions $2$. On the other hand, $G_{\bar v_1 \bar v_2}=G$, $F(\bar v_1)={e_{v_1}}$ and $F(\bar v_2)= {e_{v'_2}}$. Here, $\Lambda_1(F(\bar v_1), F(\bar v_2))$ and $\bar{\Lambda}_1(\bar v_1,\bar v_2)$ have dimensions $4$ and $2$ respectively.
\end{example}

\begin{section}{Galois semi-covering functor in the module category}
In this section, we introduce a Galois semi-covering functor between the module category of $\Lambda$ and $\Lambda G$, which appears as a pushdown functor of a Galois semi-covering between these bounded algebras. 


\begin{definition}\label{dgsc}
Let $F: \Lambda \to \Lambda G$ be the Galois semi-covering functor. We define the pushdown functor $F_\lambda: \mathrm{mod}\mbox{-}\Lambda \to \mathrm{mod}\mbox{-}\Lambda G$ as follows:

Suppose $M \in \mathrm{mod}\mbox{-}\Lambda$, then $F_\lambda M= \bigoplus_{i_0\in \tilde I} F_\lambda M(\bar {e_{i_0}})$ where, for each $\bar {e_{i_0}}\in \Lambda G$, we set
$$F_\lambda M(\bar {e_{i_0}}):= \bigoplus_{F(x)=\bar {e_{i_0}}}M(x).$$

Assume that $\bar{\alpha}\in \Lambda_1G(\bar {e_{i_0}},\bar {e_{j_0}})$. Consider the following cases:
\begin{enumerate}
\item If $G_{i_0}\neq G$ then we have $\bar {e_{i_0}}=e_{i_0}$. By Proposition \ref{semcov}, $F$ induces an isomorphism $\Lambda_1G(F{i_0},F{j_0})\approx \bigoplus_{g\in G} \Lambda_1(g{i_0},{j_0})$ and hence, there is an arrow $\alpha_h: h{i_0}\to {j_0}$ for some $h\in G$ such that $\bar{\alpha}=F(\alpha_h)$. Then the homomorphism $F_\lambda M(\bar{\alpha}): F_\lambda M(e_{i_0}) \to F_\lambda M(\bar {e_{j_0}})$ is defined by homomorphism:
$$(\mu_g) \mapsto (\sum_{g\in G} M(g\alpha_h)(\mu_g)).$$

\item If $G_{i_0}=G$ but $G_{j_0}\neq G$ then we have $\bar {e_{j_0}}=e_{j_0}$. By Proposition \ref{semcov}, $F$ induces an isomorphism $\Lambda_1G(Fi_0,Fj_0)\approx \bigoplus_{g\in G} \Lambda_1(i_0,gj_0)$ and hence there is an arrow $\alpha_h: i_0\to hj_0$ for some $h\in G$ such that $\bar{\alpha}=F(\alpha_h)$. Then the homomorphism $F_\lambda M(\bar{\alpha}): F_\lambda M(\bar {e_{i_0}}) \to F_\lambda M(e_{j_0})$ is defined by homomorphism:
$$\mu \mapsto (M(g_1\alpha_h)(\mu),\hdots, M(g_n\alpha_h)(\mu)).$$

\item If $G_{i_0j_0}= G$ then by Proposition \ref{semcov}, $F$ induces an isomorphism, $\Lambda_1G(Fi_0,Fj_0)\approx \Lambda^{|G|}_1(i_0,j_0)$ and hence there is an arrow $\alpha: i_0\to j_0$ such that $\bar{\alpha}=F(\alpha)$. Then the homomorphism $F_\lambda M(\bar{\alpha}): F_\lambda M(\bar {e_{i_0}}) \to F_\lambda M(\bar {e_{j_0}})$ is defined by homomorphism:
$$\mu \mapsto M(\alpha)(\mu).$$
\end{enumerate}
\end{definition}

The following example illustrates the push-down functor $F_\lambda:\mathrm{mod}\mbox{-}\Lambda \to \mathrm{mod}\mbox{-}\Lambda G$.

\begin{example}
Consider the algebras $\Lambda$ and $\bar{\Lambda}$ from Example \ref{SKA}. Here, for a representation $M$ of $\Lambda$ we compute $F_\lambda M$ of $\bar{\Lambda}$ using the above definition as follows: $$F_\lambda M(\bar {e_{v_2}})= F_\lambda M(\bar{v_2})= M(v_2)\oplus M(v'_2)=K \oplus K.$$
$$F_\lambda M(\bar {e_{v_3}})= M(v_3)=K \oplus K \mbox{ i.e. } M(\bar {v_3})=M(\bar {v_3}')=K \oplus K.$$
\begin{figure}[h]
\begin{minipage}[b]{0.45\linewidth}
\centering
\begin{tikzcd}[sep={3.9em,between origins}]
K \arrow[r, "1"] \arrow[rdd, "0"', shift right] & K \arrow[rd, "\left(\begin{matrix} 1 \\ 1  \end{matrix}\right)"]  &                                                                            &     \\
                                                &                                                                   & K^2 \arrow[r, "\left( \begin{matrix} 1 \phantom{0} 0 \\ 0 \phantom{0} 1  \end{matrix}\right)"] & K^2 \\
0 \arrow[r, "0"] \arrow[ruu, "1"', shift right] & K \arrow[ru, "\left(\begin{matrix} -1 \\ -1  \end{matrix}\right)"'] &                                                                            &    
\end{tikzcd}

\caption{$M$}
\label{fig:my_label}
\end{minipage}
\hspace{0.45cm}
\begin{minipage}[b]{0.5\linewidth}
\centering
\begin{tikzcd}[sep={3.9em,between origins}]
                                                                                                                                                            &                                                                                                                                & K^2 \arrow[r, "\left(\begin{matrix} 1 \phantom{0} 0 \\ 0 \phantom{0} 1  \end{matrix}\right)"] & K^2 \\
K \arrow[r, "\left(\begin{matrix} 0 \\ 1  \end{matrix}\right)"', bend right=49] \arrow[r, "\left(\begin{matrix} 1 \\ 0  \end{matrix}\right)", bend left=49] & K^2 \arrow[ru, "\left(\begin{matrix} 1 \phantom{0} 0\\ 1 \phantom{0} 0\end{matrix}\right)"] \arrow[rd, "\left(\begin{matrix} 0 \phantom{0} -1 \\ 0 \phantom{0} -1\end{matrix}\right)"'] &                                                                           &     \\
                                                                                                                                                            &                                                                                                                                & K^2 \arrow[r, "\left(\begin{matrix} 1 \phantom{0} 0 \\ 0 \phantom{0} 1  \end{matrix}\right)"] & K^2
\end{tikzcd}    
\caption{$F_\lambda(M)$}
    \label{fig:my_label}
    \end{minipage}
\end{figure}

We compute $F_\lambda M(\bar{\alpha})$ for $\bar{\alpha}:\bar{v}_1\to \bar{v}_2\in Q_{G_1}$. Here, $\bar{\alpha}\in \Lambda_1G(\bar {e_{v_1}},\bar {e_{v_2}})$ where, $G_{v_1}, G_{v_2}\neq G$ (see Example \ref{repcl}). So we use the first case in Definition \ref{dgsc}. By Proposition \ref{semcov}, consider the arrow $\alpha_h=\alpha: v_1\to v_2$  such that $\bar{\alpha}=F(\alpha)$. In $M$, $M(\alpha_h)=1: K \to K$ and $M(\tau\alpha_h)=0: 0\to K$. Therefore, $F_\lambda M(\bar{\alpha}): F_\lambda M(e_{v_1}) \to F_\lambda M(v_2)$ is defined by: $\left(\begin{matrix} 1 \\ 0  \end{matrix}\right):K\bigoplus 0 \to K\bigoplus K.$ 

Now we compute $F_\lambda M(\bar{\gamma})$ for $\bar{\gamma}:\bar{v}_2\to \bar{v}_3\in Q_{G_1}$. Here, $\bar{\gamma}\in \Lambda_1G(e_{v_2},e_{v_3e})$ where, $G_{v_2}\neq G, G_{v_3}=G$ (see Example \ref{repcl}). Thus, we use the first case in Definition \ref{dgsc}. Denote $(\bar{\gamma}, \bar{\gamma'})$ by $\tilde{\gamma}$. By Proposition \ref{semcov}, consider the arrow $\alpha_h=\gamma: v_2\to v_3$ such that $(\tilde{\gamma})=F(\gamma)$. In the representation $M$, $M(\alpha_h)=\left(\begin{matrix} 1 \\ 1  \end{matrix}\right): K \to K^2$ and $M(\tau\alpha_h)=\left(\begin{matrix} -1 \\ -1  \end{matrix}\right): K\to K^2$. Hence, $F_\lambda M(\tilde{\gamma}): F_\lambda M(e_{v_2}) \to F_\lambda M(\bar{e_{v_3}})$ is defined by $(\left(\begin{matrix} 1 \phantom{0} 0\\ 1 \phantom{0} 0 \end{matrix}\right), \left(\begin{matrix} 0 \phantom{0} -1\\ 0 \phantom{0} -1 \end{matrix}\right)):K^2 \doublerightarrow{(F_\lambda M(\bar{\gamma}),0)}{(0, F_\lambda M(\bar{\gamma'}))} K^2\bigoplus K^2.$ 
\end{example}

\noindent{\textbf{Morphism in $\mathrm{mod}\mbox{-}\Lambda G$:}} Suppose $f:=(f_{i_0})_{i_0\in Q_0}: M\to N$ is a homomorphism in $\mathrm{mod}\mbox{-}\Lambda$ where, $f_{i_0}: M(i_0) \to N(i_0)$. Then $F_\lambda(f):= (\hat{f}_{\bar{e_{i_0}}}): F_\lambda M \to F_\lambda N$ where, $\hat{f}_{\bar{e_{i_0}}}: F_\lambda M(\bar{e_{i_0}}) \to F_\lambda N(\bar{e_{i_0}})$ is defined by homomorphisms $f_{i}: M(i) \to N(i)$, for all $i\in \mathcal{O}(i_0)$.

The next theorem states the involutive effect of skew group algebra construction.
\begin{theorem}\label{adj}\cite[Corollary~5.2]{ReRi85}
For an abelian group $G$, the algebra $\Lambda$ is Morita equivalent with the skew group algebra of $(\Lambda G)\hat{G}$, where the action of $\hat{G}$ on $\Lambda G$ is defined by $\chi (\lambda \otimes g): = \chi(g) \lambda \otimes g$ for $\lambda \in \Lambda, g \in G$.  
\end{theorem}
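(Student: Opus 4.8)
This statement is the Reiten--Riedtmann duality, and the plan is to exhibit $\Lambda$ as the corner algebra of $(\Lambda G)\hat G$ cut out by a full idempotent. First I would set up notation by writing a basis element of $(\Lambda G)\hat G$ as $[\lambda,g,\chi]:=(\lambda\otimes g)\otimes\chi$; unwinding the two skew multiplications gives the single rule $[\lambda,g,\chi][\mu,h,\psi]=\chi(h)\,[\lambda\,g(\mu),\,gh,\,\chi\psi]$. The subalgebras $\Lambda$, $KG$ and $K\hat G$ embed via $\lambda\mapsto[\lambda,e,tr]$, $g\mapsto[1,g,tr]$ and $\chi\mapsto[1,e,\chi]$, and from the rule one reads off the commutation relations $\chi\lambda=\lambda\chi$, $g\lambda=g(\lambda)\,g$ and $\chi g=\chi(g)\,g\chi$. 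These relations are all I will need.

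Next I would use the standing hypotheses ($n=|G|$ invertible in $K$ and $K$ algebraically closed) to introduce the orthogonal idempotents $\varepsilon_a:=\frac1n\sum_{\chi\in\hat G}\chi(a)^{-1}\chi\in K\hat G$, indexed by $a\in G\cong\hat{\hat G}$, which satisfy $\varepsilon_a\varepsilon_b=\delta_{ab}\varepsilon_a$ and $\sum_{a\in G}\varepsilon_a=1$. A short computation from $\chi g=\chi(g)g\chi$ gives $\varepsilon_e\,g=g\,\varepsilon_{g^{-1}}$, hence $\varepsilon_e\,g\,\varepsilon_e=\delta_{g,e}\,\varepsilon_e$; together with $\varepsilon_e\lambda=\lambda\varepsilon_e$ and $\chi\varepsilon_e=\varepsilon_e$ this yields $\varepsilon_e[\lambda,g,\chi]\varepsilon_e=\delta_{g,e}\,\lambda\varepsilon_e$. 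Consequently the corner algebra is $\varepsilon_e\,(\Lambda G)\hat G\,\varepsilon_e=\{\lambda\varepsilon_e:\lambda\in\Lambda\}$, and since $(\lambda\varepsilon_e)(\mu\varepsilon_e)=\lambda\mu\,\varepsilon_e$ the map $\lambda\varepsilon_e\mapsto\lambda$ is an algebra isomorphism $\varepsilon_e\,(\Lambda G)\hat G\,\varepsilon_e\cong\Lambda$.

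Finally I would check that $\varepsilon_e$ is full. The relation $\varepsilon_e\,g=g\,\varepsilon_{g^{-1}}$ rearranges to $\varepsilon_a=[1,a,tr]\,\varepsilon_e\,[1,a^{-1},tr]$, so each $\varepsilon_a$ is a conjugate of $\varepsilon_e$ by the unit $[1,a,tr]$ and therefore lies in the two-sided ideal generated by $\varepsilon_e$; as $\sum_a\varepsilon_a=1$, that ideal is all of $(\Lambda G)\hat G$. Hence $\varepsilon_e$ is a full idempotent, and $(\Lambda G)\hat G$ is Morita equivalent to $\varepsilon_e\,(\Lambda G)\hat G\,\varepsilon_e\cong\Lambda$, as claimed. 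The one place demanding genuine care is the bookkeeping of the twisted product in the identity $\varepsilon_e\,g\,\varepsilon_e=\delta_{g,e}\varepsilon_e$ and in the conjugation formula for the $\varepsilon_a$; invertibility of $n$ and the presence of enough roots of unity in $K$ are exactly what make the idempotents $\varepsilon_a$ available. As a sanity check and alternative route, the same conclusion follows from the Cohen--Montgomery smash-product duality $(A\#H)\#H^{*}\cong A\otimes\mathrm{End}_K(H)$ applied to $H=KG$, using that $KG\cong K^{\hat G}$ and $(KG)^{*}\cong K\hat G$ for abelian $G$.
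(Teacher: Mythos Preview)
Your argument is correct. The computation of the twisted product, the idempotent decomposition $\{\varepsilon_a\}_{a\in G}$ of $K\hat G$, the identification of the corner $\varepsilon_e(\Lambda G)\hat G\,\varepsilon_e$ with $\Lambda$, and the fullness of $\varepsilon_e$ via conjugation all check out; the only hypotheses you use are that $n=|G|$ is invertible in $K$ and that $K$ contains enough roots of unity for character orthogonality, both of which are granted by the paper's standing assumption that $K$ is algebraically closed.

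There is nothing to compare against in the paper itself: Theorem~\ref{adj} is stated there as a citation of \cite[Corollary~5.2]{ReRi85} and is not given an independent proof. Your write-up therefore supplies a self-contained argument where the paper simply appeals to the literature. For what it is worth, the original Reiten--Riedtmann argument proceeds somewhat differently, working with the bimodule structure of $\Lambda G$ and the induction/restriction adjunctions rather than by cutting out an explicit corner; your idempotent approach is closer in spirit to the Cohen--Montgomery duality you mention at the end, and has the advantage of being entirely elementary and transparent about exactly where invertibility of $|G|$ enters.
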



Given a $M\in \mathrm{mod}\mbox{-}\Lambda$ we define the module ${}^gM$ where ${}^gM(x):=M(gx)$ and a module homomorphism $f: M \to N$ we denote by ${}^gf$ the $\Lambda$-module homomorphism ${}^gM \to {}^gN$ such that ${}^gf(x):= f(gx)$, for any $x \in Q_0$. This defines an action of $G$ on $\mathrm{mod}\mbox{-}\Lambda$. Moreover, the map $f\to {}^gf$ defines isomorphism of vector spaces $\Lambda(M,N) \approx \Lambda({}^gM, {}^gN)$. Denote by $G_{M}$, the stabilizer of $M$ in $G$ and $G_{MN}=G_{M}\cap G_{N}$. 

Here are some observations about the push-down functor.

\begin{rmk}\label{adja}
Theorem \ref{adj} ensures the existence of a Galois semi-covering $G_\lambda: \mathrm{mod}\mbox{-}\Lambda G \to \mathrm{mod}\mbox{-}\Lambda$. Moreover, note that, $G_\lambda (F_\lambda M) (x)= F_\lambda (Fx)= \bigoplus_{g\in G} M({}^gx)= \bigoplus_{g\in G} {}^gM(x)$ for each $x\in \Lambda$. This implies that $G_\lambda(F_\lambda M)=\bigoplus_{g\in G}{}^gM$.
\end{rmk}

The following lemma shows that $F_\lambda$ is stable under the $G$-action. 
\begin{lemma}\label{stableG}
For all $g\in G$, $F_\lambda {}^gM=F_\lambda M$ and $\hat{G}_{F_\lambda M}=\hat{G}$;
\end{lemma}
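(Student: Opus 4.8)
The plan is to prove the two assertions of Lemma~\ref{stableG} separately, deriving the second from the first. For the first claim, $F_\lambda\,{}^gM = F_\lambda M$, I would work directly from the definition of the pushdown functor in Definition~\ref{dgsc}. The underlying vector space of $F_\lambda M$ is $\bigoplus_{i_0\in\tilde I} F_\lambda M(\bar e_{i_0})$ with $F_\lambda M(\bar e_{i_0}) = \bigoplus_{F(x)=\bar e_{i_0}} M(x)$, and the index set $\{x : F(x)=\bar e_{i_0}\}$ is precisely the $G$-orbit $O_{i_0}$. First I would observe that replacing $M$ by ${}^gM$ merely relabels the summands: since ${}^gM(x) = M(gx)$ and $x\mapsto gx$ permutes each orbit $O_{i_0}$ (because $G$ acts on the vertices and preserves orbits), the collection of summands $\{\,{}^gM(x) : x\in O_{i_0}\,\} = \{\,M(gx) : x\in O_{i_0}\,\}$ equals $\{\,M(y):y\in O_{i_0}\,\}$ as an indexed family up to the bijection $x\mapsto gx$. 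Thus $F_\lambda\,{}^gM(\bar e_{i_0})$ and $F_\lambda M(\bar e_{i_0})$ are canonically identified.

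The substance of the argument lies in checking that this identification is compatible with the structure maps $F_\lambda M(\bar\alpha)$ for each $\bar\alpha\in\Lambda_1 G(\bar e_{i_0},\bar e_{j_0})$, so that the relabeling is genuinely an isomorphism of $\Lambda G$-modules and not merely of vector spaces. Here I would run through the three cases of Definition~\ref{dgsc}. In the first case the structure map sends $(\mu_g)\mapsto(\sum_{g\in G}M(g\alpha_h)(\mu_g))$; applying the same recipe to ${}^gM$ produces $\sum_{g'}\,{}^gM(g'\alpha_h) = \sum_{g'}M(gg'\alpha_h)$, and after reindexing the summation variable $g'\mapsto g^{-1}g'$ this matches the structure map for $M$ under the orbit-relabeling bijection already fixed. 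The second and third cases are handled analogously, the third being essentially trivial since $G_{i_0 j_0}=G$ forces a single structure map $\mu\mapsto M(\alpha)(\mu)$ and the action of $g$ on the relevant vertices is internal to the orbit. The main obstacle I anticipate is bookkeeping: one must verify that the \emph{same} orbit bijection $x\mapsto gx$ simultaneously intertwines all the structure maps, so the identifications at different vertices $\bar e_{i_0}$ assemble into a single well-defined module isomorphism. This is a matter of chasing indices carefully rather than a conceptual difficulty.

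For the second claim, I would deduce $\hat G_{F_\lambda M} = \hat G$ from the first together with Theorem~\ref{adj} and Remark~\ref{adja}. Recall that $\hat G$ acts on $\mathrm{mod}\mbox{-}\Lambda G$ via the adjoint functor, and that by Remark~\ref{adja} the composite $G_\lambda F_\lambda M = \bigoplus_{g\in G}{}^gM$. The stabilizer $\hat G_{F_\lambda M}$ consists of those characters $\chi\in\hat G$ fixing $F_\lambda M$ up to isomorphism. I would argue that, since $F_\lambda M$ is built from a full $G$-orbit of data — its restriction to each $\bar e_{i_0}$ already contains all of $\{M(x):x\in O_{i_0}\}$ — the $\hat G$-action, which (by the formula $\chi(\lambda\otimes g)=\chi(g)\lambda\otimes g$ from Theorem~\ref{adj}) only rescales the $KG$-components, permutes the isotypic pieces of $F_\lambda M$ trivially on isomorphism classes. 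Concretely, $F_\lambda M$ is induced from $M$ and is therefore $\hat G$-invariant by the standard adjunction between induction and restriction along $\Lambda\hookrightarrow\Lambda G$; every character $\chi$ satisfies $\chi\cdot F_\lambda M\cong F_\lambda M$, whence $\hat G_{F_\lambda M}=\hat G$. I expect this second part to follow cleanly once the first is established, so the bulk of the work — and the only real obstacle — is the index-chasing in the compatibility check for the structure maps.
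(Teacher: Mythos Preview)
Your argument for the first assertion is essentially identical to the paper's: both unfold the definition of $F_\lambda$ at each idempotent $\bar e_{i_0}$, use that the fibre $\{x:F(x)=\bar e_{i_0}\}$ is a $G$-orbit permuted by $x\mapsto gx$, and conclude that the summands are merely relabelled. The paper dispatches the structure-map compatibility in a single clause (``it is clear that $F_\lambda\,{}^gM(\alpha)=F_\lambda M(\alpha)$''), whereas you plan to run through the three cases of Definition~\ref{dgsc}; this is only a difference in level of detail, not in strategy.

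For the second assertion the approaches diverge. The paper argues in one line: ${}^{\hat g}(F_\lambda M)=F_\lambda({}^gM)=F_\lambda M$, using the compatibility of the $\hat G$-action on $\mathrm{mod}\mbox{-}\Lambda G$ with the $G$-action on $\mathrm{mod}\mbox{-}\Lambda$ through the pushdown and then applying the first part. Your route---recognising $F_\lambda M$ as an induced module and invoking the general fact that induction along $\Lambda\hookrightarrow\Lambda G$ produces $\hat G$-stable objects---is correct and conceptually clean, but brings in more machinery than the situation requires. The paper's shortcut buys brevity and stays entirely within the constructions already set up; your argument has the advantage of making $\hat G$-stability follow from a general principle rather than a direct identity, at the cost of having to justify that $F_\lambda$ really is (Morita-equivalent to) induction.
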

\begin{proof}
By definition, $F_\lambda {}^gM= \bigoplus_{i_0\in \tilde I} F_\lambda {}^gM(\bar {e_{i_0}})$ where, for each $\bar {e_{i_0}}\in \Lambda G$, we have $$F_\lambda {}^gM(\bar {e_{i_0}})= \bigoplus_{F(x)=\bar {e_{i_0}}}{}^gM(x)= \bigoplus_{F(x)=\bar {e_{i_0}}}M({}^gx)= \bigoplus_{F(x)=\bar {e_{i_0}}}M(x)=F_\lambda M(\bar {e_{i_0}}).$$ Here, the third equality holds as $F$ is stable under the $G$-action. Moreover, it is clear that $F_\lambda {}^gM(\alpha)=F_\lambda M(\alpha)$ for each $\alpha\in \Lambda_1G(\bar {e_{i_0}},\bar {e_{j_0}})$. This completes the proof.

Moreover, ${}^gF_\lambda M=F_\lambda {}^gM=F_\lambda M$ which says that $\hat{G}_{F_\lambda M}=\hat{G}$.
\end{proof}

The next proposition shows that, unlike Galois covering, the Galois semi-covering functor does not necessarily send an indecomposable module to an indecomposable module. 

\begin{proposition}\label{rem} 
If $M\in \mathrm{ind}\mbox{-}\Lambda$ with $G_{M}=G$ then $F_\lambda M=\bigoplus_{\hat{g}\in \hat{G}} \hat{g} \bar M$ for some $\bar M \in \mathrm{ind}\mbox{-}\Lambda G$.
\end{proposition}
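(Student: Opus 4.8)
The plan is to exploit the duality between $G$ and $\hat G$ provided by Theorem~\ref{adj}, together with the Hom-space formula of Theorem~\ref{diag} (stated as Theorem~(Theorem~\ref{diag}) in the introduction). The key observation is that since $G_M = G$, the module $M$ is fixed by the whole group, so $F_\lambda M$ should carry a natural $\hat G$-action whose orbit structure governs the decomposition. By Lemma~\ref{stableG} we already know $\hat G_{F_\lambda M} = \hat G$, meaning $F_\lambda M$ is itself $\hat G$-stable; this is the Galois-semicovering analogue of the hypothesis $G_M = G$ on the other side of the duality.

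**The main computation.**

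First I would compute the endomorphism algebra $\mathrm{End}_{\Lambda G}(F_\lambda M)$. Applying Theorem~\ref{diag} with $N = M$ and using $G_M = G$, we obtain
\[
\mathrm{End}_{\Lambda G}(F_\lambda M) \;\approx\; \bigoplus_{g\in G}\mathrm{Hom}_\Lambda(M,\,gM) \;\approx\; \bigoplus_{g\in G}\mathrm{Hom}_\Lambda(M,M),
\]
where the last isomorphism uses $gM \cong M$ for all $g$ (which is exactly $G_M = G$). Since $M$ is indecomposable, $\mathrm{End}_\Lambda(M)$ is local, so $\mathrm{End}_{\Lambda G}(F_\lambda M)$ is, up to radical, a direct sum of $|G| = n$ copies of a division ring (over the algebraically closed $K$, simply $K^n$ modulo radical). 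Thus $F_\lambda M$ decomposes into exactly $n = |\hat G|$ indecomposable summands, counted with multiplicity.

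**Identifying the summands as an $\hat G$-orbit.**

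Next I would show these $n$ summands form a single $\hat G$-orbit, each appearing once. The point is that $\hat G$ acts on $\mathrm{mod}\text{-}\Lambda G$ (via the action in Theorem~\ref{adj}), and this action permutes the indecomposable summands of $F_\lambda M$. Writing $F_\lambda M = \bigoplus_i \bar M_i$, I would argue that $\hat G$ acts transitively on the set $\{\bar M_i\}$: if it did not, then applying the reverse semi-covering $G_\lambda$ of Remark~\ref{adja} and using $G_\lambda(F_\lambda M) = \bigoplus_{g\in G}{}^gM \cong M^{\,n}$ (again by $G_M = G$) would force too many or too few summands after re-pushing down, contradicting the count $n$ just established. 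Transitivity of a group of order $n$ acting on a set of $n$ elements (with the summands being the full preimage under $G_\lambda$) forces the stabilizer of each $\bar M_i$ in $\hat G$ to be trivial, so the orbit is free and exhausts all $n$ summands. Hence $F_\lambda M = \bigoplus_{\hat g\in \hat G}\hat g\bar M$ for a single indecomposable $\bar M := \bar M_1$.

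**The main obstacle.**

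The hardest step is the transitivity/freeness of the $\hat G$-action on the summands — i.e.\ ruling out that $\bar M$ has a nontrivial $\hat G$-stabilizer, which would make $F_\lambda M$ a sum of fewer than $n$ distinct modules with higher multiplicity. I expect this to require the compatibility between the two semi-coverings $F_\lambda$ and $G_\lambda$ (Theorem~\ref{adj}, Remark~\ref{adja}): one shows that $\hat g\bar M \cong \bar M$ for some nontrivial $\hat g$ would, upon applying $G_\lambda$, contradict the indecomposability of $M$ or the precise multiplicity in $G_\lambda(F_\lambda M) = \bigoplus_{g\in G}{}^gM$. Making this contradiction airtight — tracking how the $\hat G$-action on $\mathrm{mod}\text{-}\Lambda G$ intertwines with the $G$-action on $\mathrm{mod}\text{-}\Lambda$ under the pushdown — is the crux, and everything else reduces to the Hom-space bookkeeping of Theorem~\ref{diag}.
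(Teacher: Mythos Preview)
Your approach is genuinely different from the paper's. The paper gives a direct constructive argument: it partitions $\mathrm{Sup}(M)$ according to whether vertices are $G$-fixed and whether they are linked by arrows to non-fixed vertices, then traces through the three clauses of Definition~\ref{dgsc} to see explicitly that the representation $F_\lambda M$ splits as $\bigoplus_{\rho\in\hat G}{}^\rho\bar M$ at the level of vector spaces and linear maps attached to vertices and arrows of $Q_G$. No Hom-space calculus, endomorphism rings, or duality with $\hat G$ is invoked; the decomposition is read off from the combinatorics of the quiver.

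Your route via $\mathrm{End}_{\Lambda G}(F_\lambda M)$ and the $\hat G$-orbit structure is more conceptual, but within the paper's logical order it is circular: the case $G_{MN}=G$ of Theorem~\ref{diag} that you invoke is proved in the paper \emph{using} Proposition~\ref{rem} (see Case~IV of that proof, where the decomposition $F_\lambda(M)=\bigoplus_{\hat g\in\hat G}{}^{\hat g}\bar M$ is cited as input). There is also a subtler gap: Theorem~\ref{diag} only asserts an isomorphism of \emph{vector spaces}, not of algebras, so the inference ``$\mathrm{End}_{\Lambda G}(F_\lambda M)\approx\bigoplus_{g}\mathrm{End}_\Lambda(M)$, hence semisimple quotient $K^{n}$, hence exactly $n$ indecomposable summands'' does not follow from what is actually stated. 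To rescue your line one would have to bypass Theorem~\ref{diag} entirely and appeal directly to the Reiten--Riedtmann adjunction together with an \emph{algebra} isomorphism of the form $\mathrm{End}_{\Lambda G}(F_\lambda M)\cong (\mathrm{End}_\Lambda M)G$, then use that $|G|$ is invertible and $K$ is algebraically closed to split the group-algebra part; the transitivity/freeness of the $\hat G$-action you flag as the crux would then become a standard Clifford-theory argument. As written, however, both the input you feed into the endomorphism-ring computation and the conclusion you draw from it need to be tightened before the strategy stands on its own.
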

\begin{proof}
Let $M:=(M(i)_{i\in Q_0}, M(\alpha_{ij})_{\alpha_{ij}:i\to j\in Q_1})$ be a module over $\Lambda$ that satisfies $G_M=G$. The support of a module $M$, denoted $\mathrm{Sup}M$, is defined as: $\mathrm{Sup}(M)=\{i\in Q_0: M(i)\neq 0\}$. 

First, we partition both supports. Write $\mathrm{Sup}(M)= S'\coprod S''$, where, $$S':=\{i\in \mathrm{Sup}(M)\mid G_i=G\}.$$ $$S'':=\{j\in \mathrm{Sup}(M)\mid G_j\neq G\}.$$ Moreover, write $S'=\coprod_{i=1}^2 S'_i$ and $S''=\coprod_{i=1}^2 S''_i$, $$S'_1:=\{i\in S' \mid M(\alpha_{ij})\cup M(\alpha_{ji})=0 \mbox{ for all } j\in S''\}.$$ $$S'_2:=\{i\in S' \mid M(\alpha_{ij})\cup M(\alpha_{ji})\neq 0 \mbox{ for some } j\in S''\}.$$ $$S''_1:=\{j\in S'' \mid M(\alpha_{ij})\cup M(\alpha_{ji})=0 \mbox{ for all } i\in S'\}.$$ $$S''_2:=\{j\in S'' \mid M(\alpha_{ij})\cup M(\alpha_{ji})\neq 0 \mbox{ for some } i\in S'\}.$$ 

Split the set of morphisms of $M$ as $\{M(\alpha_{ij})\mid {\alpha_{ij} \in Q_1}\}:=A'_1\coprod A'_2\coprod A''_1\coprod A''_2$ where,
$$A'_1:=\{M(\alpha_{ij}) \mid  i,j\in S'_1\}, A'_2:=\{M(\alpha_{ij}) \mid i\in S'_2, j\in S''_2\}.$$ $$A''_1:=\{M(\alpha_{ij}) \mid i, j\in S''_1\}, A''_2:=\{M(\alpha_{ij}) \mid i\in S''_2, j\in S'_2\}.$$ 

If $M(\alpha_{ij})\in A'_1$ then $F_\lambda (M(\alpha_{ij}))=\sum_{\rho \in \hat{G}} F_\lambda M(\bar{\alpha_{ij}}^\rho)$ by the third clause in Definition \ref{dgsc} where, $F_\lambda M(\bar{\alpha_{ij}}^\rho)\in {}_{\Lambda G}(F_\lambda M({e_{i_0,\rho}}),F_\lambda M({e_{j_0,\rho}}))$.

If $M(\alpha_{ij})\in A'_2$ then $F_\lambda (M(\alpha_{ij}))=\sum_{\rho \in \hat{G}} F_\lambda M(\bar{\alpha_{ij}}^\rho)$ by the second clause in Definition \ref{dgsc} where, $F_\lambda M(\bar{\alpha_{ij}}^\rho)\in {}_{\Lambda G}(F_\lambda M({e_{i_0,\rho}}),F_\lambda M({e_{j_0,tr}}))$. 

If $M(\alpha_{ij})\in A''_1$ then $F_\lambda (M(\alpha_{ij}))= \bar{\alpha_{ij}}$ where, $\bar{\alpha_{ij}}\in {}_{\Lambda G}(F_\lambda M({e_{i_0,tr}}),F_\lambda M({e_{j_0,tr}}))$.

If $M(\alpha_{ij})\in A''_2$ then $F_\lambda (M(\alpha_{ij}))=\sum_{\rho \in \hat{G}} F_\lambda M(\bar{\alpha_{ij}}^\rho)$ by the first clause in Definition \ref{dgsc} where, $F_\lambda M(\bar{\alpha_{ij}}^\rho)\in {}_{\Lambda G}(F_\lambda M({e_{i_0,tr}}), F_\lambda M({e_{j_0,\rho}}))$. 

If $G_{M}=G$ then $\mathcal{O}(i)\subseteq \mathrm{Sup}(M)$ for each $i\in \mathrm{Sup}(M)$. We compute the representation $F_\lambda M=(F_\lambda(M(i))_{i\in Q_0}, F_\lambda(M(\alpha_{ij}))_{\alpha_{ij}:i\to j\in Q_1})$, where 
\begin{equation*}
\begin{split}
\bigoplus_{i,j\in Q_0} F_\lambda (M(\alpha_{ij})) & = \bigoplus_{i,j\in S'_1} F_\lambda (M(\alpha_{ij}))\bigoplus_{i\in S'_2,j\in S''_2} F_\lambda (M(\alpha_{ij}))\bigoplus_{i,j\in S''_1} F_\lambda (M(\alpha_{ij}))\bigoplus_{i\in S''_2, j\in S'_2} F_\lambda (M(\alpha_{ij})) \\
& =\bigoplus_{i,j\in S'_1} \bigoplus_{\rho \in \hat{G}} F_\lambda M(\bar{\alpha_{i,j}}^\rho)\bigoplus_{i\in S'_2,j\in S''_2}\bigoplus_{\rho \in \hat{G}} F_\lambda M(\bar{\alpha_{ij}}^\rho)\bigoplus_{i,j\in S''_1} \bar{\alpha_{ij}} \bigoplus_{i\in S''_2, j\in S'_2} \bigoplus_{\rho \in \hat{G}} F_\lambda M(\bar{\alpha_{ij}}^\rho).
\end{split}
\end{equation*}

This ensures that $F_\lambda M= \bigoplus_{\rho \in \hat{G}} {}^\rho \bar{M}$ where, ${}^\rho \bar{M}:=({}^\rho \bar{M}(\tilde{i}))_{\tilde{i}\in Q_{G_0}}, {}^\rho \bar{M}(\tilde{\alpha}_{ij})_{\tilde{\alpha}_{ij}:\tilde{i}\to \tilde{j}\in Q_{G_1}})$ with

$\bigoplus_{i,j\in Q_0} {}^\rho \bar{M}(\tilde{\alpha}_{ij}):=\bigoplus_{i,j\in S'_1} F_\lambda M(\bar{\alpha_{i,j}}^\rho)\bigoplus_{i\in S'_2,j\in S''_2} F_\lambda M(\bar{\alpha_{ij}}^\rho)\bigoplus_{i,j\in S''_1} \bar{\alpha_{ij}} \bigoplus_{i\in S''_2, j\in S'_2} F_\lambda M(\bar{\alpha_{ij}}^\rho)$. 
\end{proof}

The next proposition shows that the Galois semi-covering functor $F_\lambda$ preserves indecomposability under some additional hypothesis.
    
\begin{proposition}\label{Ind}
Suppose $M\in \mathrm{ind}\mbox{-}\Lambda$ with $G_{M}\neq G$. Then $F_\lambda M$ is indecomposable.
\end{proposition}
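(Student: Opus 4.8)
The plan is to show that $F_\lambda M$ is indecomposable by proving that its endomorphism algebra $E:=\mathrm{End}_{\Lambda G}(F_\lambda M)$ is local; equivalently, since $E$ is finite-dimensional over the algebraically closed field $K$, that $E$ contains no idempotents other than $0$ and $1$.

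First I would compute $E$ as a vector space. Since $G_M\neq G$, Theorem \ref{diag} (first case, with $N=M$) gives an isomorphism of $K$-vector spaces $E\approx\bigoplus_{g\in G}\mathrm{Hom}_\Lambda({}^gM,M)$. Because $M$ is indecomposable, $\mathrm{End}_\Lambda(M)$ is local, and for each $g$ the twist ${}^gM$ is again indecomposable (the $G$-action is by autoequivalences). Thus a homomorphism ${}^gM\to M$ is an isomorphism precisely when ${}^gM\cong M$, i.e. precisely when $g\in G_M$; for $g\notin G_M$ every element of $\mathrm{Hom}_\Lambda({}^gM,M)$ is a non-isomorphism between indecomposables.

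Next I would pin down the multiplication. The decomposition above is the one induced by the faithful exact restriction functor $G_\lambda:\mathrm{mod}\mbox{-}\Lambda G\to\mathrm{mod}\mbox{-}\Lambda$, under which $G_\lambda(F_\lambda M)=\bigoplus_{g\in G}{}^gM$ (Remark \ref{adja}); faithfulness embeds $E$ as a subalgebra of $\mathrm{End}_\Lambda(\bigoplus_{g}{}^gM)$, so composition in $E$ is the restriction of the matrix (convolution) product on that endomorphism algebra. I would then argue that the subspace $R:=\bigoplus_{g\notin G_M}\mathrm{Hom}_\Lambda({}^gM,M)\oplus\mathrm{rad}\,\mathrm{End}_\Lambda(M)$ is a two-sided ideal of $E$: it is closed under the product because composing any non-isomorphism between indecomposables with an arbitrary map again lands among non-isomorphisms, and it is nilpotent by a Harada--Sai/Fitting argument (a long enough composite of non-isomorphisms between indecomposable $\Lambda$-modules of bounded length vanishes). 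Granting this, $E/R$ is spanned by the classes coming from $g\in G_M$, and showing $E$ local amounts to showing $E/R$ is a division ring.

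The main obstacle is exactly this last point: controlling the contribution of $g\in G_M\setminus\{1\}$. Under the standing Assumption the subtle case is a module supported on fixed vertices, where ${}^gM$ merely rescales each arrow $a$ by the character $\chi_a(g)$; here one must use admissibility of the ideal (arrows act nilpotently) together with indecomposability to see that such rescalings cannot produce a proper nontrivial stabilizer, forcing $G_M=\{1\}$ whenever $G_M\neq G$. With $G_M=\{1\}$ one has $E/R\cong K$, so $E$ is local and $F_\lambda M$ is indecomposable. I expect the bookkeeping that identifies the convolution product on $\bigoplus_{g}\mathrm{Hom}_\Lambda({}^gM,M)$ and the verification that the $G_M$-part collapses to $K$ to be where the real work lies; an alternative, perhaps cleaner, route is to avoid the multiplication entirely and instead lift a hypothetical decomposition $F_\lambda M=\bar A\oplus\bar B$ through $G_\lambda$, matching the indecomposable summands of $\bigoplus_g{}^gM$ against the $\hat G$-orbit structure of $F_\lambda M$ guaranteed by Lemma \ref{stableG} and the dual of Proposition \ref{rem}.
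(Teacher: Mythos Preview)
Your primary route via the endomorphism algebra is genuinely different from the paper's argument; the paper does exactly what you sketch in your final sentence. It assumes $F_\lambda M=N_1\oplus N_2$, applies $G_\lambda$ to obtain $\bigoplus_{g\in G}{}^gM=G_\lambda N_1\oplus G_\lambda N_2$, uses Krull--Schmidt to write $G_\lambda N_1=\bigoplus_{h\in H}{}^hM$ for some $H\subseteq G$, and then invokes the $\hat G$-stability of $F_\lambda M$ from Lemma~\ref{stableG} to force $H=G$, hence $G_\lambda N_2=0$ and $N_2=0$. No analysis of the ring structure of $E$ is needed.

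Your endomorphism-ring argument, by contrast, has a real gap at the exact spot you flag. The claim that $G_M\neq G$ forces $G_M=\{1\}$ does not follow from the standing Assumption, which constrains only stabilizers of \emph{vertices}, not of modules; neither admissibility nor indecomposability helps. A concrete obstruction: take the Kronecker quiver $1\rightrightarrows 2$ with arrows $\alpha,\beta$, let $G=\mathbb Z_2\times\mathbb Z_2=\langle a,b\rangle$ fix both vertices, and set $\chi_\alpha$ trivial while $\chi_\beta(a)=-1$, $\chi_\beta(b)=1$. The indecomposable module $M=(K,K;\,1,1)$ then satisfies ${}^bM\cong M$ but ${}^aM\not\cong M$, so $G_M=\{e,b\}$ is a proper nontrivial subgroup. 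In any such situation your own computation gives $E/R\cong K[G_M]$ (possibly twisted), which for $|G_M|>1$ is a product of copies of $K$ and hence \emph{not} local; so the endomorphism-ring route cannot conclude indecomposability without first establishing $G_M=\{1\}$ by some other means. When $|G|$ is prime---in particular for the skew-gentle case $G=\mathbb Z_2$---the implication $G_M\neq G\Rightarrow G_M=\{1\}$ is automatic, and then both your argument and the paper's go through without difficulty; note that the paper's step ``$G_\lambda N_1=\bigoplus_{h\in H}{}^hM$ for some $H\subseteq G$'' likewise reads the ${}^gM$ as pairwise non-isomorphic, so the two approaches are in practice relying on the same trivial-stabilizer situation.
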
 
\begin{proof}
If possible, $F_\lambda M=N_1\bigoplus N_2$. Then by Remark \ref{adja}, we have $\bigoplus_{g\in G}{}^gM=G_\lambda(F_\lambda M)= G_\lambda(N_1)\bigoplus G_\lambda(N_2)$ which implies that $G_\lambda(N_1)=\bigoplus_{h\in H}{}^hM$ for some $H\subseteq G$. Moreover, we have $G_{F_\lambda M}=G$ by Lemma \ref{stableG} and thus, $G_\lambda(N_1)=\bigoplus_{h\in H}{}^{gh}M$ for all $g\in G$. Therefore, $G_\lambda(N_1)=\bigoplus_{g\in G}{}^gM$ and hence $N_2=0$. Hence the proof.
\end{proof}

\begin{rmk}\label{indr}
Lemma \ref{stableG} and Proposition \ref{Ind} ensures that for a $M\in \mathrm{ind}\mbox{-}\Lambda$ with $G_{M}\neq G$ we have $F_\lambda M=\bar M$ where, $\bar M\in \mathrm{ind}\mbox{-}\Lambda G$ with $\hat{G}_{\bar M}=\hat{G}$. Moreover, one can follow the algorithm described in Proposition \ref{rem} for the explicit construction of $\bar M$. 
\end{rmk}

Proposition \ref{rem} establishes a link of modules in $\mathrm{mod}\mbox{-}\Lambda$ and $\mathrm{mod}\mbox{-}\Lambda G$, whereas the following proposition shows the correspondence between their morphisms.

\begin{proposition}\label{3.7}
Assume that $f_{MN} \in \mathrm{Hom}_\Lambda(M,N)$ for some $M,N \in \mathrm{mod}\mbox{-}\Lambda$. Then there exists a $\bar{f}$ with $\bar{f}_{{}^{\hat{g}}\bar{M}{}^{\hat{g}}\bar{N}} \in \mathrm{Hom}_{\Lambda G}({}^{\hat{g}}\bar{M},{}^{\hat{g}}\bar{N})$ such that the following hold: 

$$F_\lambda(f_{{}^gM,{}^gN})\approx \begin{cases}\uwithtext{$\bar{N}$}_{\hat{g}\in \hat{G}} \bar{f}_{{}^{\hat{g}}\bar{M} \bar{N}} &\mbox{ if } G_{M}= G, G_{N}\neq G;\\\uwithtext{$\bar{M}$}_{\hat{g}\in \hat{G}} \bar{f}_{\bar{M} {}^{\hat{g}}\bar{N}} &\mbox{ if }G_{M}\neq G, G_{N}= G;\\\bigoplus_{\hat{g}\in \hat{G}} \bar f_{{}^{\hat{g}}\bar{M}{}^{\hat{g}}\bar{N}} &\mbox{ if } G_{MN}=G;\\\bar{f}_{\bar{M}\bar{N}}&\mbox{ if }G_{M}, G_{N}\neq G.\end{cases}$$

Where, $\uwithtext{Z}_{i\in I}f_i$ means $f_i$'s for an index set $I$ and $i\in I$ being glued at the module $Z$.
\end{proposition}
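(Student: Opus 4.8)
The plan is to reduce to the case of indecomposable $M$ and $N$ (where the symbols $\bar M,\bar N$ are defined by Proposition \ref{rem} and Remark \ref{indr}) and then read off the block shape of $F_\lambda(f)$ from the shapes of its source and target; the general statement then follows from additivity of $F_\lambda$. First I would record, for each of $M$ and $N$, the decomposition of its pushdown: if $G_M\neq G$ then $F_\lambda M=\bar M$ is indecomposable (Remark \ref{indr}), while if $G_M=G$ then $F_\lambda M=\bigoplus_{\hat g\in\hat G}{}^{\hat g}\bar M$ with $\bar M$ indecomposable (Proposition \ref{rem}), and similarly for $N$. Because $F_\lambda$ is additive and the computation of Lemma \ref{stableG} gives $F_\lambda{}^gM=F_\lambda M$ together with the analogous identity on morphisms, the map written on the left, $F_\lambda(f_{{}^gM,{}^gN})$, is the same regardless of the twist $g$, so the four clauses are governed purely by whether the source and the target are indecomposable or split into the $\hat G$-family. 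This at once disposes of the last clause: when $G_M,G_N\neq G$ both pushdowns are indecomposable and $F_\lambda(f)=\bar f_{\bar M\bar N}$ with nothing further to check.

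Next I would handle the two mixed clauses. If $G_M=G$ but $G_N\neq G$, the source splits as $\bigoplus_{\hat g}{}^{\hat g}\bar M$ while the target is the single indecomposable $\bar N$, so $F_\lambda(f)$ is a row of $\Lambda G$-morphisms ${}^{\hat g}\bar M\to\bar N$ sharing the common codomain $\bar N$; this is precisely the gluing $\uwithtext{$\bar N$}_{\hat g}\bar f_{{}^{\hat g}\bar M\bar N}$. Dually, if $G_M\neq G$ and $G_N=G$ one obtains a column of morphisms $\bar M\to{}^{\hat g}\bar N$ glued at the common domain $\bar M$. In these two clauses there is nothing to make vanish; it only remains to identify each component as a genuine morphism between the indicated indecomposables, which I read off from the componentwise definition of $F_\lambda$ on morphisms together with the arrow bookkeeping of Remark \ref{arrowcorrespondence} that underlies the proof of Proposition \ref{rem}.

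The essential clause, and the main obstacle, is $G_{MN}=G$, where both $F_\lambda M=\bigoplus_{\hat g}{}^{\hat g}\bar M$ and $F_\lambda N=\bigoplus_{\hat g}{}^{\hat g}\bar N$ split, and one must prove that $F_\lambda(f)$ is block-diagonal, i.e. has no component ${}^{\hat g}\bar M\to{}^{\hat h}\bar N$ for $\hat g\neq\hat h$. The organizing principle I would use is Theorem \ref{diag}: here it gives $\mathrm{Hom}_{\Lambda G}(F_\lambda M,F_\lambda N)\cong\bigoplus_{g\in G}\mathrm{Hom}_\Lambda(M,gN)$, and under the isomorphism $G\cong\hat G$ the $G$-grading of the right-hand side matches the grading of $\mathrm{Hom}_{\Lambda G}(F_\lambda M,F_\lambda N)$ coming from the $\hat G$-families; since $f$ lives in the degree-zero ($g=e$) summand $\mathrm{Hom}_\Lambda(M,N)$, its image $F_\lambda(f)$ lands in the degree-zero part, which is exactly the diagonal $\bigoplus_{\hat g}\mathrm{Hom}_{\Lambda G}({}^{\hat g}\bar M,{}^{\hat g}\bar N)$. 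Concretely this reflects the fact that $F_\lambda(f)$ is a $\Lambda G$-morphism and therefore commutes with the idempotents $e_{i_0\rho}=i_0\otimes e_\rho$ at the fixed vertices of $\tilde I''$ and with the action of $G\subseteq(KQ)G$ at the free vertices of $\tilde I'$, so it preserves the grading that cuts out the summands ${}^{\hat g}\bar M$ and is forced to be diagonal; this yields $F_\lambda(f)=\bigoplus_{\hat g}\bar f_{{}^{\hat g}\bar M{}^{\hat g}\bar N}$, with diagonal entries the twists ${}^{\hat g}\bar f$ of a single descended morphism.

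I expect the delicate point to be making this grading-compatibility precise and uniform. One must verify that the $\hat G$-grading on $F_\lambda M$ produced by Proposition \ref{rem} genuinely coincides with the grading for which $\Lambda G$-morphisms are automatically homogeneous, and that this works simultaneously at the free vertices, where the $\hat G$-sectors are internal $G$-isotypic pieces of a single $Q_G$-vertex space separated only by the group action, and at the fixed vertices, where the sectors correspond to the distinct vertices $(i_0,\rho)$ of $Q_G$ and are separated by the idempotents $e_\rho$; one must also transport the argument correctly between $(KQ)G$ and its basic form $\bar e(KQ)G\bar e$ via the Morita equivalence. As a safer but more laborious alternative I would instead run the explicit componentwise computation already used in the proof of Proposition \ref{rem}, tracking each $f_i\colon M(i)\to N(i)$ through the decomposition and checking directly that only the diagonal components survive. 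Once diagonality is secured, the identification of the entries and the reduction to arbitrary (not necessarily indecomposable) modules by additivity complete the argument.
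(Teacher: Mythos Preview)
The paper does not supply a proof of Proposition~\ref{3.7}; it is stated immediately after the explicit vertex-by-vertex definition of $F_\lambda$ on morphisms and after Proposition~\ref{rem}, and is evidently meant to be read off from those two ingredients. So there is no ``paper's proof'' to compare against beyond that implicit reasoning.

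Your case split and your treatment of the last clause and the two mixed clauses are fine and match exactly what the paper's setup suggests: the shape of $F_\lambda(f)$ is dictated by whether each of $F_\lambda M$, $F_\lambda N$ is a single $\bar M$ (resp.\ $\bar N$) or the full $\hat G$-family, and the components are read off from the componentwise definition of $F_\lambda$ on morphisms.

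The problematic step is your primary argument for the clause $G_{MN}=G$. You invoke Theorem~\ref{diag}, but that theorem comes \emph{after} Proposition~\ref{3.7} in the paper, and more importantly it only produces an abstract vector-space isomorphism $\nu_{M,N}$; it does not assert that $\nu_{M,N}(f)=F_\lambda(f)$, nor that the $G$-grading on $\bigoplus_g\mathrm{Hom}_\Lambda(M,{}^gN)$ corresponds under $\nu_{M,N}$ to the block decomposition of $\mathrm{Hom}_{\Lambda G}(F_\lambda M,F_\lambda N)$ induced by the $\hat G$-summands. That matching of gradings is essentially what you are trying to prove, so the argument as written is circular (or at best a forward reference to a statement that does not contain the needed information). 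Your ``concrete'' paragraph about commuting with the idempotents $e_{i_0\rho}$ is heading in the right direction but, as you yourself note, the verification that the $\hat G$-decomposition of Proposition~\ref{rem} is precisely the isotypic decomposition for those idempotents is exactly the content to be checked.

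Your ``safer alternative'' is the correct route and is what the paper's placement of the proposition suggests: trace $f=(f_i)_{i\in Q_0}$ through the partition $S'_1\sqcup S'_2\sqcup S''_1\sqcup S''_2$ of $\mathrm{Sup}(M)$ and $\mathrm{Sup}(N)$ used in the proof of Proposition~\ref{rem}, and observe that at every vertex the map $\hat f_{\bar e_{i_0}}$ (which is by definition the block-diagonal map built from the $f_i$, $i\in\mathcal O(i_0)$) respects the $\rho$-indexed pieces appearing in that proof. This gives diagonality directly, without any appeal to Theorem~\ref{diag}. I would promote that alternative to your main argument and drop the Theorem~\ref{diag} detour.
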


The next example is useful throughout the paper, as we refer to it later, also to explain the irreducible morphisms and almost split sequences in the skew group algebra as well.
\begin{example}\label{ARQSKW}
Consider the algebra $\Lambda$ and its skew algebra $\bar{\Lambda}=\Lambda \mathbb{Z}_2$ under the action of $\mathbb{Z}_2=\{e,g\}$ where, $g$ exchanges $3$ and $4$ and fixes the remaining vertices, given by the quiver with relations in Figure \ref{fig 5} and Figure \ref{fig 6} respectively. Note that, $F(e_1)=\bar {e_3}$, $F(e_2)=\bar {e_2}$, $F(e_3)=\bar {e_1}=e_1$ and $F(e_4)=\bar {e_1}= e_1$. Their AR-quivers are those of Figure \ref{fig 7} and Figure \ref{fig 8}, respectively, where the modules are represented by their composition factors. 

\begin{figure}[h]
\begin{minipage}[b]{0.41\linewidth}
\centering
\xymatrix@R=0.5mm{&&3\ar_\gamma[ld]\\
                                        1 \ar^\alpha @/^/[r] & \ar^\beta @/^/[l] 2& \\
                                        &&4\ar^\delta[lu]}
\caption{$\Lambda$ with $\rho= \{\alpha\beta\alpha, \beta\alpha\beta, \gamma\beta, \delta\beta \}$}
\label{fig 5}
\end{minipage}
\hspace{.35cm}
\begin{minipage}[b]{0.55\linewidth}
\centering
\xymatrix@R=0.5mm{ & 2\ar^\beta@/^/[r] & \ar^\gamma@/^/[l]  3\\
1 \ar^\alpha[ru] \ar_\delta[rd] & & \\
 & 4\ar^\epsilon@/^/[r] & \ar^\mu@/^/[l] 5}
\caption{$\bar{\Lambda}$ with $\rho=\{\alpha\beta, \beta\gamma\beta,  \gamma\beta\gamma, \delta\epsilon, \epsilon\mu\epsilon, \mu\epsilon\mu\}$}
    \label{fig 6}
    \end{minipage}
\end{figure}

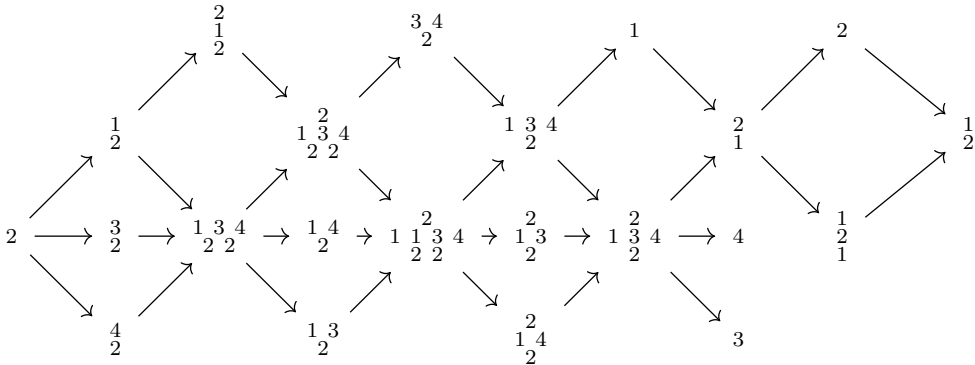
\begin{figure}
    \centering
\begin{tikzcd}[sep={3.55em,between origins}]
& &  \nd{2\\1\\2} \arrow[rd]
& & \nd{3\ 4 \\2} \arrow[rd]
& & \nd{1} \arrow[rd]
& & \nd{2} \arrow[rd] \\
%
& \nd{1\\2} \arrow[ru] \arrow[rd]
& &  \nd{2\\1\ 3\ 4\\2\ 2} \arrow[ru] \arrow[rd]
& & \nd{1\ 3\ 4\\2} \arrow[ru] \arrow[rd]
& & \nd{2\\1} \arrow[ru] \arrow[rd]
& & \nd{1\\2} \\
%
\nd{2} \arrow[ru] \arrow[rd] \arrow[r]
& \nd{3 \\2 } \arrow[r]
& \nd{1\ 3\ 4\\2\ 2} \arrow[rd] \arrow[ru] \arrow[r]
& \nd{1\ 4\\2} \arrow[r]
& \nd{2\\1\ 1\ 3\ 4\\2\ 2} \arrow[ru] \arrow[rd] \arrow[r]
& \nd{2\\1\ 3\\2} \arrow[r]
& \nd{2\\1\ 3\ 4\\2} \arrow[ru] \arrow[rd] \arrow[r]
& \nd{4} 
& \nd{1\\2\\1} \arrow[ru]
& & \\
%
& \nd{4\\2} \arrow[ru]
& & \nd{1\ 3\\2} \arrow[ru] 
& & \nd{2\\1\ 4\\2} \arrow[ru]
& & \nd{3} 
\end{tikzcd}
\caption{AR quiver of $\Lambda$}
    \label{fig 7}
\end{figure}

\begin{figure}
\centering
\begin{tikzcd}[sep={3.0em,between origins}]
& &  \nd{2\\3\\2} \arrow[rd]
& & \nd{1\\4} \arrow[rd]
& & \nd{5} \arrow[rd]
& & \nd{4} \arrow[rd] \\
%
& \nd{3\\2} \arrow[ru] \arrow[rd]
& &  \nd{2\\3\ 1\\2\ 4} \arrow[ru] \arrow[rd]
& & \nd{1\ 5\\4} \arrow[ru] \arrow[rd]
& & \nd{4\\5} \arrow[ru] \arrow[r]
& \nd{5\\4\\5} \arrow[r]
& \nd{5\\4} \\
%
\nd{2} \arrow[ru] \arrow[rd]
& & \nd{3 1\\2 4} \arrow[rd] \arrow[ru]
& & \nd{2\\3\ 1\ 5\\2\ 4} \arrow[ru] \arrow[rd]
& & \nd{1\  4\\5\\4} \arrow[ru] \arrow[rd]
& & \\
%
& \nd{1\\24} \arrow[ru] \arrow[rd]
& & \nd{315\\24} \arrow[ru] \arrow[rd]
& & \nd{2\ 4\\3\ 1\ 5\\2\ 4} \arrow[ru] \arrow[rd]
& & \nd{1} \\
%
\nd{4} \arrow[ru] \arrow[rd]
& & \nd{15\\24} \arrow[ru] \arrow[rd]
& & \nd{4\\3\ 1\ 5\\2\ 4} \arrow[ru] \arrow[rd]
& & \nd{2\\3\ 1\\2} \arrow[ru] \arrow[rd] \\
%
& \nd{5\\4} \arrow[ru] \arrow[rd]
& & \nd{4\\1\ 5\\2\ 4} \arrow[ru] \arrow[rd]
& & \nd{1\\3\ 2} \arrow[ru] \arrow[rd]
& & \nd{2\\3} \arrow[rd] \arrow[r]
& \nd{3\\2\\3} \arrow[r]
& \nd{3\\2} \\
%
& & \nd{4\\5\\4} \arrow[ru]
& & \nd{1\\2} \arrow[ru]
& & \nd{3} \arrow[ru]
& & \nd{2} \arrow[ru]
\end{tikzcd}
\caption{AR quiver of $\Lambda \mathbb{Z}_2$}
    \label{fig 8}
\end{figure}
Let $M = 2$ be the simple $\Lambda$-module associated to vertex 2. Observe this module is stable by the action of $G=\mathbb{Z}_2$ over $\Lambda$. Here, $F_\lambda(2)= 2 \oplus 4$ is the direct sum of two simple modules over $\Lambda \mathbb{Z}_2$, i.e. $F_\lambda(M)$ is a decomposable module. 
Let $N= \nd{3\\2}$ be the $\Lambda$-module. This module is not stable since ${}^gN=\nd{4\\2}\neq N$. Note that $F_\lambda(N)=F_\lambda({}^gN)=\bar{N}= \nd{1\\2\ 4}$ which is indecomposable. We will prove this is always the case for unstable modules. 

Now, consider the irreducible morphisms $f:2 \to \begin{matrix} 3\\ 2 \end{matrix}$ and ${}^gf:2 \to \begin{matrix} 4\\ 2 \end{matrix}$. The image of $f$ is the irreducible morphism $F_\lambda(f)=\uwithtext{$\bar{N}$}_{\hat{g}\in \mathbb{Z}_2}{}^{\hat{g}}\bar{f}:2 \oplus 4 \to \nd{1\\2\ 4}$ where, $\bar{f}:2 \to \nd{1\\2\ 4}$ and ${}^{\hat{g}}\bar{f}: 4 \to \nd{1\\2\ 4}$. On the other hand, both $2$ and $\begin{matrix} 1\\ 2 \end{matrix}$ are stable under the $G$-action and $F_\lambda$ takes the irreducible morphism $h:2 \to \begin{matrix} 1\\ 2 \end{matrix}$ to two different irreducible morphisms  $\bar h:2 \to \begin{matrix} 3\\ 2 \end{matrix}$ and ${}^{\hat{g}}\bar h: 4 \to \begin{matrix} 5\\ 4 \end{matrix}$.

Moreover, one can easily check that, the module $\begin{matrix} 3\\ 2 \end{matrix}$ and the morphism $2 \to \begin{matrix} 3\\ 2 \end{matrix}$ have no preimage under $F_\lambda$. This means that $F_\lambda$ is not dense. 

\end{example}

The next result establishes a Galois semi-covering functor from $\mathrm{mod}\mbox{-}\Lambda$ to $\mathrm{mod}\mbox{-}\Lambda G$.

\begin{theorem}\label{diag}
Assume that $G$ acts on an algebra $\Lambda$ and $\Lambda G$ is the associated skew group algebra. Then for any $M, N\in \mathrm{mod}\mbox{-}\Lambda$, the functor $F_\lambda: \mathrm{mod}\mbox{-}\Lambda \to \mathrm{mod}\mbox{-}\Lambda G$ induces the following isomorphisms of vector spaces: 
\vspace{-0.1in} 
$$\mathrm{Hom}_{\Lambda G}(F_\lambda M,F_\lambda N)\approx \begin{cases}\bigoplus_{g\in G} \mathrm{Hom}_{\Lambda}({}^gM,N)&\mbox{ if } G_{M}\neq G;\\\bigoplus_{g\in G} \mathrm{Hom}_{\Lambda}(M,{}^gN)&\mbox{ if }G_{N}\neq G;\\\mathrm{Hom}_{\Lambda}^{|G|}(M,N)&\mbox{ if }G_{MN}= G.\end{cases}$$
In particular, $F_\lambda$ is a Galois semi-covering functor from $\mathrm{mod}\mbox{-}\Lambda$ to $\mathrm{mod}\mbox{-}\Lambda G$.
\end{theorem}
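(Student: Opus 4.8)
The plan is to collapse the three-case formula to a single natural isomorphism and then establish that isomorphism through the induction–restriction adjunction for the extension $\Lambda\hookrightarrow\Lambda G$, reading off the answer from Remark \ref{adja}. First I would note that the three right-hand sides are abstractly isomorphic, so only one of them needs proof. The autoequivalence ${}^{g}(-)$ of $\mathrm{mod}\mbox{-}\Lambda$ gives $\mathrm{Hom}_{\Lambda}(M,{}^{g}N)\approx\mathrm{Hom}_{\Lambda}({}^{g^{-1}}M,N)$, and re-indexing over $g\in G$ identifies $\bigoplus_{g}\mathrm{Hom}_{\Lambda}(M,{}^{g}N)$ with $\bigoplus_{g}\mathrm{Hom}_{\Lambda}({}^{g}M,N)$; moreover when $G_{M}=G$ (resp. $G_{N}=G$) we have ${}^{g}M\cong M$ (resp. ${}^{g}N\cong N$) for every $g$, so each display collapses to $\mathrm{Hom}_{\Lambda}^{|G|}(M,N)$. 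Hence the genuine content is the single isomorphism $\mathrm{Hom}_{\Lambda G}(F_{\lambda}M,F_{\lambda}N)\approx\bigoplus_{g\in G}\mathrm{Hom}_{\Lambda}(M,{}^{g}N)$, and the case distinction in the statement follows formally.

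For that single isomorphism I would use the adjunction. Up to the Morita equivalence $\Lambda G\sim\bar e(\Lambda G)\bar e$, the pushdown $F_{\lambda}$ is the induction functor $-\otimes_{\Lambda}\Lambda G$, which is left adjoint to the restriction functor $\mathrm{Res}\colon\mathrm{mod}\mbox{-}\Lambda G\to\mathrm{mod}\mbox{-}\Lambda$ along $\Lambda\hookrightarrow\Lambda G$. Taking $Y=F_{\lambda}N$ in $\mathrm{Hom}_{\Lambda G}(F_{\lambda}M,Y)\approx\mathrm{Hom}_{\Lambda}(M,\mathrm{Res}\,Y)$ and substituting $\mathrm{Res}\,F_{\lambda}N\approx\bigoplus_{g\in G}{}^{g}N$, which is exactly the computation recorded in Remark \ref{adja}, yields $\mathrm{Hom}_{\Lambda G}(F_{\lambda}M,F_{\lambda}N)\approx\bigoplus_{g\in G}\mathrm{Hom}_{\Lambda}(M,{}^{g}N)$ as required. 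The comparison map is realised concretely by $f\mapsto F_{\lambda}(f)$: since $F_{\lambda}\,{}^{g}N=F_{\lambda}N$ by Lemma \ref{stableG}, each $f\in\mathrm{Hom}_{\Lambda}(M,{}^{g}N)$ produces a morphism $F_{\lambda}M\to F_{\lambda}N$, and Proposition \ref{3.7} exhibits its block form, so the unit/counit of the adjunction can be matched against these explicit blocks.

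As a self-contained alternative, avoiding the adjunction, I would prove bijectivity by hand. Injectivity of $f\mapsto F_{\lambda}(f)$ is immediate because the construction of $F_{\lambda}$ on morphisms (Definition \ref{dgsc}) retains the original blocks, so $F_{\lambda}$ is faithful, and Proposition \ref{3.7} shows the $|G|$ summands land in distinct graded pieces. For surjectivity I would apply the faithful functor $G_{\lambda}$ of Remark \ref{adja}, embedding $\mathrm{Hom}_{\Lambda G}(F_{\lambda}M,F_{\lambda}N)$ into $\mathrm{Hom}_{\Lambda}\!\big(\bigoplus_{g}{}^{g}M,\bigoplus_{h}{}^{h}N\big)\approx\bigoplus_{g,h}\mathrm{Hom}_{\Lambda}({}^{g}M,{}^{h}N)$, and then identify the image as the $\hat G$-equivariant part, the equivariance being forced by $\hat G_{F_{\lambda}M}=\hat G_{F_{\lambda}N}=\hat G$ (Lemma \ref{stableG}). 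Because $|G|=|\hat G|$ is invertible in $K$, averaging over $\hat G$ is available and cuts the $|G|^{2}$ a priori blocks down to $|G|$ surviving ones, matching $\dim\bigoplus_{g}\mathrm{Hom}_{\Lambda}(M,{}^{g}N)$.

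The main obstacle is the fixed-point behaviour. When $G_{M}=G$ or $G_{N}=G$ the module $F_{\lambda}M$ (or $F_{\lambda}N$) is decomposable by Proposition \ref{rem}, so the clean $G$-grading that makes the statement transparent for an honest free Galois covering is no longer available; controlling which blocks of $\bigoplus_{g,h}\mathrm{Hom}_{\Lambda}({}^{g}M,{}^{h}N)$ actually come from $\Lambda G$-morphisms is exactly where the $\hat G$-equivariance of Lemma \ref{stableG} and the invertibility of $|G|$ must be used. Once the single isomorphism is secured, matching it against the equivalent three-case form of Definition \ref{GSC} shows that $F_{\lambda}$ is a Galois semi-covering functor.
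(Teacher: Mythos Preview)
Your approach is genuinely different from the paper's, and conceptually cleaner. The paper argues case by case on the pair of stabilizers $(G_M,G_N)$, building explicit mutually inverse maps $\nu_{M,N}$ and $\mu_{M,N}$ out of the decompositions of $F_\lambda M$ and $F_\lambda N$ supplied by Propositions~\ref{rem} and~\ref{Ind} and Proposition~\ref{3.7}; in the case where both stabilizers are proper it simply invokes a dimension count (``it works as a locally Galois covering''). You instead observe that the three displayed right-hand sides are all isomorphic to $\bigoplus_{g}\mathrm{Hom}_\Lambda(M,{}^gN)$ and then deduce that single identity from the induction--restriction adjunction for $\Lambda\hookrightarrow\Lambda G$. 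Your route buys naturality in $M,N$ for free and sidesteps the four subcases; the paper's route, on the other hand, produces the explicit block maps $\nu_{M,N}$ that are reused later when analysing irreducible morphisms and almost split sequences, so with your argument some of that bookkeeping would have to be recovered separately.

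One step needs more care. You cite Remark~\ref{adja} for $\mathrm{Res}\,F_\lambda N\approx\bigoplus_g{}^gN$, but that remark computes $G_\lambda(F_\lambda N)$, where $G_\lambda$ is the \emph{pushdown} for the dual $\hat G$-action on $\Lambda G$, not the restriction functor along $\Lambda\hookrightarrow\Lambda G$. For the adjunction argument to go through you must either verify directly from Definition~\ref{dgsc} that $F_\lambda$ coincides with $(-)\otimes_\Lambda\Lambda G$ composed with the Morita equivalence $\Lambda G\sim\bar e(\Lambda G)\bar e$, or else check that $G_\lambda$ agrees with restriction under that equivalence. Neither identification is established in the paper; both are plausible (this is where the separability of $\Lambda\subset\Lambda G$ under the hypothesis that $|G|$ is invertible enters), but the fixed vertices in $\tilde I''$ are exactly where the dimension bookkeeping is delicate, so a short explicit check at those idempotents would close the gap. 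Your alternative argument via $\hat G$-equivariance and averaging faces the same issue in disguise: you need to know that $G_\lambda$ is faithful and that its image on $\mathrm{Hom}_{\Lambda G}(F_\lambda M,F_\lambda N)$ is precisely the $\hat G$-invariants, which again amounts to identifying $G_\lambda$ with restriction.
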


\begin{proof}
Here, we analyze the first two cases, taking into account three different situations where $G_{M}\neq G, G_{N}= G$; $G_{N}\neq G, G_{M}= G$ and $G_{N}\neq G, G_{M}\neq G$. Now, let us describe the isomorphism $\nu_{M,N}$ explicitly as follows:

\noindent{\textbf{Case-I:}} Here, $G_{M}\neq G, G_{N}= G$. Thus we have $F_\lambda ({}^gM)=\bar M$ for all $g\in G$ and $\bar M \in \mathrm{mod}\mbox{-}\Lambda G$ and $F_\lambda (N)=\bigoplus_{\hat{g}\in \hat{G}} {}^{\hat{g}} \bar N$ for some $\bar N \in \mathrm{mod}\mbox{-}\Lambda G$ by Proposition \ref{rem} and Remark \ref{indr}. We define $\nu_{M, N}: \bigoplus_{g\in G} \mathrm{Hom}_{\Lambda}({}^gM,N) \to \mathrm{Hom}_{\Lambda G}(F_\lambda M,F_\lambda N)$ by $\nu_{M,N}(f_1,\hdots, f_n)_{1\times n}=(\bar{f_1},\hdots, \bar{f_n})_{n\times 1}$, and the inverse isomorphism $\mu_{M, N}$ by $\mu_{M, N}(\bar{f_1},\hdots, \bar{f_n})_{n\times 1}=(f_1,\hdots, f_n)_{1\times n}$, where, $f_i=f_{{}^{g_i}MN}: {}^{g_i} M\to N$ and $\bar{f_i}=\bar{f}_{\bar{M}{}^{\hat{g}_i}\bar{N}}: \bar{M}\to {}^{\hat{g}_i}\bar{N}$ are morphisms in $\mathrm{mod}\mbox{-}\Lambda$ and $\mathrm{mod}\mbox{-}\Lambda G$ respectively. Clearly, $\nu_{M,N}$ and $\mu_{M,N}$ are linear morphisms.

\begin{figure}
\centering
\begin{tikzcd} [sep={2.7em,between origins}]
{}^{g_1}M \arrow[rrdd, "f_1"]                             &  &   &                             &                              &                                                                                                                     &  & {}^{\hat{g}_1} \bar{N}                             \\
{}^{g_2}M \arrow[rrd, "f_2"'] \arrow[dd, no head, dotted] &  &   & {}                          & {} \arrow[l, "{\mu_{M,N}}"'] &                                                                                                                     &  & {}^{\hat{g}_2} \bar{N} \arrow[dd, no head, dotted] \\
                                                          &  & N &                             &                              & \bar{M} \arrow[rruu, "\bar{f}_1"] \arrow[rru, "\bar{f}_2"'] \arrow[rrdd, "\bar{f}_n"'] \arrow[rrd, "\bar{f}_{n-1}"] &  &                                                    \\
{}^{g_{n-1}}M \arrow[rru, "f_{n-1}"]                      &  &   & {} \arrow[r, "{\nu_{M,N}}"] & {}                           &                                                                                                                     &  & {}^{\hat{g}_{n-1}} \bar{N}                         \\
{}^{g_n}M \arrow[rruu, "f_n"']                            &  &   &                             &                              &                                                                                                                     &  & {}^{\hat{g}_n} \bar{N}                            
\end{tikzcd}
\end{figure}

\noindent{\textbf{Case-II:}} Here, $G_{N}\neq G, G_{M}= G$. Thus we have $F_\lambda ({}^gN)=\bar N$ for all $g\in G$ and $\bar N \in \mathrm{mod}\mbox{-}\Lambda G$ and $F_\lambda (M)=\bigoplus_{\hat{g}\in \hat{G}} {}^{\hat{g}} \bar M$ for some $\bar M \in \mathrm{mod}\mbox{-}\Lambda G$ by Proposition \ref{rem} and Remark \ref{indr}. We define $\nu_{M,N}:\bigoplus_{g\in G} \mathrm{Hom}_{\Lambda}(M,{}^gN)\to \mathrm{Hom}_{\Lambda G}(F_\lambda M,F_\lambda N)$ by $\nu_{M,N}(f_1,\hdots, f_n)_{n\times 1}=(\bar{f_1},\hdots, \bar{f_n})_{1\times n}$ and the inverse isomorphism $\mu_{M, N}$ by $\mu_{M, N}(\bar{f_1},\hdots, \bar{f_n})_{1\times n}=(f_1,\hdots, f_n)_{n\times 1}$, where, $f_i=f_{M{}^{g_i}N}: M\to {}^{g_i} N$ and $\bar{f_i}=\bar{f}_{{}^{\hat{g}_i}\bar{M}\bar{N}}: {}^{\hat{g}_i}\bar{M}\to \bar{N}$ are morphisms in $\mathrm{mod}\mbox{-}\Lambda$ and $\mathrm{mod}\mbox{-}\Lambda G$ respectively. Clearly, $\nu_{M,N}$ and $\mu_{M,N}$ are linear morphisms.

\[\begin{tikzcd}[sep={2.7em,between origins}]
                                                                                      &  & {}^{g_1}N                             &                              &                              & {}^{\hat{g}_1}\bar{M} \arrow[rrdd, "\bar{f}_1"]                             &  &         \\
                                                                                      &  & {}^{g_2}N \arrow[dd, no head, dotted] & {}                           & {} \arrow[l, "{\mu_{M,N}}"'] & {}^{\hat{g}_2}\bar{M} \arrow[rrd, "\bar{f}_2"'] \arrow[dd, no head, dotted] &  &         \\
M \arrow[rruu, "f_1"] \arrow[rru, "f_2"'] \arrow[rrdd, "f_n"'] \arrow[rrd, "f_{n-1}"] &  &                                       &                              &                              &                                                                             &  & \bar{N} \\
                                                                                      &  & {}^{g_{n-1}}N                         & {} \arrow[r, "{\nu_{M,N}}"'] & {}                           & {}^{\hat{g}_{n-1}}\bar{M} \arrow[rru, "\bar{f}_{n-1}"]                      &  &         \\
                                                                                      &  & {}^{g_n}N                             &                              &                              & {}^{\hat{g}_n}\bar{M} \arrow[rruu, "\bar{f}_n"']                            &  &        
\end{tikzcd}\]

\noindent{\textbf{Case-III:}} Here, $G_M\neq G, G_N\neq G$. Then it works as a locally Galois covering as $G$-action does not fix $M$ and $N$ both, and as a result, the number of arrows from ${}^gM$ to $N$ for $g\in G$ is equal to the set of arrows from $\bar{M}$ to $\bar{N}$ (= the number of arrows from $M$ to ${}^gN$ also) i.e. $$\mathrm{dim}_K\mathrm{Hom}_{\Lambda G}(\bar{M}, \bar{N})= \mathrm{dim}_{K}\mathrm{Hom}_{\Lambda}(\bigoplus_{g\in G}{}^gM, N)=\mathrm{dim}_{K}\bigoplus_{g\in G}\mathrm{Hom}_{\Lambda}({}^gM, N).$$

\[
\begin{tikzcd}[sep={2.85em,between origins}]
{}^{g_1}M \arrow[rrdd, "f_1"]                             &  &                              & {} \arrow[rr, "\bar{f}_1"]                           &                                & {}                          &                                                                                       &  & {}^{g_1}N                             \\
{}^{g_2}M \arrow[rrd, "f_2"'] \arrow[dd, no head, dotted] &  & {}                           & {} \arrow[l, "{\mu_{M,N}}"'] \arrow[rr, "\bar{f}_2"] & {} \arrow[dd, no head, dotted] & {} \arrow[r, "{\mu_{M,N}}"] & {}                                                                                    &  & {}^{g_2}N \arrow[dd, no head, dotted] \\
                                                          &  & N                            & \bar{M}                                              &                                & \bar{N}                     & M \arrow[rruu, "f^1"] \arrow[rru, "f^2"'] \arrow[rrd, "f^{n-1}"] \arrow[rrdd, "f^n"'] &  &                                       \\
{}^{g_{n-1}}M \arrow[rru, "f_{n-1}"]                      &  & {} \arrow[r, "{\nu_{M,N}}"'] & {} \arrow[rr, "\bar{f}_{n-1}"]                       & {}                             & {}                          & {} \arrow[l, "{\nu_{M,N}}"]                                                           &  & {}^{g_{n-1}}N                         \\
{}^{g_n}M \arrow[rruu, "f_n"']                            &  &                              & {} \arrow[rr, "\bar{f}_n"]                           &                                & {}                          &                                                                                       &  & {}^{g_n}N                            
\end{tikzcd}
\]

The second equality holds since it is a finite-dimensional vector space. Thus we get $$\mathrm{Hom}_{\Lambda G}(\bar{M}, \bar{N})\approx\bigoplus_{g\in G}\mathrm{Hom}_{\Lambda}({}^gM, N)\approx\bigoplus_{g\in G}\mathrm{Hom}_{\Lambda}(M, {}^gN).$$ 

Let $m\leq n$ be the number of the non-zero representatives $f_i$ from the equivalence classes $\mathcal{O}_{f_i}$ under the $G$-action. Then the isomorphism $\nu_{M, N}: \bigoplus_{g\in G} \mathrm{Hom}_{\Lambda}({}^gM,N) \to \mathrm{Hom}_{\Lambda G}(\bar{M}, \bar{N})$ is given by $\nu_{M,N}(f_1,\hdots, f_m)_{1\times m}=(\bar{f_1},\hdots, \bar{f_m})$ and the inverse $\mu_{M, N}$ by $\mu_{M, N}(\bar{f_1},\hdots, \bar{f_m})=(f_1,\hdots, f_m)_{1\times m}$ where, $f_i=f_{{}^{g_i}MN}: {}^{g_i} M\to N$ and $\bar{f_i}=\bar{f}_{\bar{M}\bar{N}}: \bar{M}\to \bar{N}$ are morphisms in $\mathrm{mod}\mbox{-}\Lambda$ and $\mathrm{mod}\mbox{-}\Lambda G$ respectively. Some of the $f_i$'s could be zero morphisms in the above picture (see Example \ref{cycthr}). Moreover, the second isomorphism in the above expression is defined similarly.

Note that, we have the following equality: $$\mathrm{dim}_K\mathrm{Hom}^{|\tilde{G}|}_{\Lambda G}(\bar{M}, \bar{N})= \mathrm{dim}_K\mathrm{Hom}_{\Lambda}(\bigoplus_{g\in G}{}^gM, \bigoplus_{g\in G}{}^gN).$$

\noindent{\textbf{Case-IV:}} Here, $G_{MN}= G$. Hence, $F_\lambda (M)=\bigoplus_{\hat{g}\in \hat{G}} {}^{\hat{g}} \bar M$ and $F_\lambda (N)=\bigoplus_{\hat{g}\in \hat{G}} {}^{\hat{g}} \bar N$ for some $\bar M, \bar N \in \mathrm{mod}\mbox{-}\Lambda G$ by Proposition \ref{rem}. From the above discussion, it is clear that, $$\mathrm{dim}_K\mathrm{Hom}^{|G|}_{\Lambda}(M, N)= \mathrm{dim}_K\mathrm{Hom}_{\Lambda G}(\bigoplus_{\hat{g}\in \hat{G}}{}^{\hat{g}}\bar{M}, \bigoplus_{\hat{g}\in \hat{G}}{}^{\hat{g}}\bar{N})= \mathrm{dim}_K\mathrm{Hom}_{\Lambda G}(F_\lambda (M), F_\lambda (N)).$$ Therefore, we have, $\mathrm{Hom}_{\Lambda G}(F_\lambda (M), F_\lambda (N))=\mathrm{Hom}^{|G|}_{\Lambda}(M, N)$, since it is a finite-dimensional vector space. Here, the explicit isomorphism $\nu_{M,N}: \mathrm{Hom}^{|G|}_{\Lambda}(M, N)\to \mathrm{Hom}_{\Lambda G}(F_\lambda (M), F_\lambda (N))$ is given by sending $f_i\mapsto \bigoplus_{g\in G} {}^g\bar{f_i}$. One can easily verify that it is a monomorphism; hence, the isomorphism follows, since both vector spaces have the same dimension.
\end{proof}

Let us discuss an example to understand the last two cases in the above theorem.

\begin{example}\label{cycthr}
Consider the skew group algebra $\bar{\Lambda}:=\Lambda\mathbb{Z}_3$ of the Kronecker algebra $\Lambda$ where the action of $\mathbb{Z}_3:=\{e,g_1, g_2\}$ on $\Lambda$ is given by fixing all the arrows, i.e. $g_i(\alpha)=\alpha$ and $g_i(\beta)=\beta$ for all $i=1,2$. Let $\mathcal{M}$ be a representation of $\Lambda$ given below.

\begin{figure}[h]
\begin{minipage}[b]{0.45\linewidth}
\centering
\begin{tikzcd}[sep={2.8em,between origins}]
1 \arrow[r, "\beta"', bend right=49] \arrow[r, "\alpha", bend left=49] & 2
\end{tikzcd}

\caption{$\Lambda$}
\label{fig:my_label}
\end{minipage}
\hspace{0.45cm}
\begin{minipage}[b]{0.5\linewidth}
\centering
\begin{tikzcd}[sep={2.8em,between origins}]
M_1 \arrow[r, "f_2"', bend right=49] \arrow[r, "f_1", bend left=49] & M_2
\end{tikzcd}
\caption{$\mathcal{M}$}
    \label{fig:my_label}
    \end{minipage}
\end{figure}

\begin{figure}[h]
\begin{minipage}[b]{0.45\linewidth}
\centering
\begin{tikzcd}[sep={2.9em,between origins}]
1 \arrow[rr, "\alpha"] \arrow[rrd, "\beta"]                      &  & 2   \\
1' \arrow[rr, "{}^{g_1}\alpha"] \arrow[rrd, "{}^{g_1}\beta"', shift right] &  & 2'  \\
1'' \arrow[rr, "{}^{g_2}\alpha"'] \arrow[rruu, "{}^{g_2}\beta"']           &  & 2''
\end{tikzcd}    
\caption{$\Lambda G$}
\label{fig:my_label}
\end{minipage}
\hspace{0.45cm}
\begin{minipage}[b]{0.5\linewidth}
\centering
\begin{tikzcd}[sep={2.9em,between origins}]
\bar{M} \arrow[rr, "\bar{f_1}"] \arrow[rrd, "\bar{f_2}"]                      &  & \bar{N}   \\
{}^{g_1}\bar{M} \arrow[rr, "{}^{g_1}\bar{f_1}"] \arrow[rrd, "{}^{g_1}\bar{f_2}"', shift right] &  & {}^{g_1}\bar{N}  \\ {}^{g_2}\bar{M} \arrow[rr, "{}^{g_2}\bar{f_1}"'] \arrow[rruu, "{}^{g_2}\bar{f_2}"']           &  & {}^{g_2}\bar{N}
\end{tikzcd}    
\caption{$F_\lambda(\mathcal{M})$}
    \label{fig:my_label}
    \end{minipage}
\end{figure}

Here, $G_{M_1M_2}=G$ and $\mathrm{Hom}_{\Lambda G}(F_\lambda (M), F_\lambda (N))=\mathrm{Hom}_{\Lambda G}(\bigoplus_{g\in G} {}^g\bar{M}, \bigoplus_{g\in G} {}^g\bar{N})$ is described by the following matrix $A$:

$$A:=\left(\begin{matrix} \bar{f_1}& \bar{f_2}& 0\\ 0& {}^{g_1}\bar{f_1}& {}^{g_1}\bar{f_2} \\ {}^{g_2}\bar{f_1}& 0& {}^{g_2}\bar{f_2}  \end{matrix}\right)$$

Here, $\mu_{M,N}: \mathrm{Hom}_{\Lambda G}(F_\lambda (M), F_\lambda (N)) \to  \mathrm{Hom}^{|G|}_{\Lambda}(M, N)$ is given by- $$\left(\begin{matrix} h_1& h_2& 0\\ 0& h_3& h_4 \\ h_5& 0& h_6  \end{matrix}\right)\mapsto (\bar{h_1}, \bar{h_2}).$$ Whereas, $\nu_{M,N}: \mathrm{Hom}^{|G|}_{\Lambda}(M, N)\to \mathrm{Hom}_{\Lambda G}(F_\lambda (M), F_\lambda (N))$ is given by- $$(f_1,f_2)\mapsto A.$$
\end{example}

\end{section}

In Example \ref{ARQSKW}, we conclude that $F_\lambda$ is not dense over $\Lambda$, but a similar property holds, which we call the semi-dense property.

\begin{definition}\label{SDF}[Semi-dense property of a functor]
A functor $F: A\to B$ between two linear categories $A$ and $B$ is semi-dense if for any $Y\in \mathrm{Ob}(B)$, there exists a $X\in \mathrm{Ob}(A)$ and $Z\in \mathrm{Ob}(B)$ such that $F(X)= Y\bigoplus Z$.      
\end{definition}

The following corollary demonstrates that $F_\lambda$ is semi-dense.
\begin{corollary}\label{SDPF}
Assume that $G$ acts on an algebra $\Lambda$ and $\Lambda G$ is the associated skew group algebra. Then the functor $F_\lambda: \mathrm{mod}\mbox{-}\Lambda \to \mathrm{mod}\mbox{-}\Lambda G$ is semi-dense.
\end{corollary}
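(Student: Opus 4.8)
The plan is to exploit the involutive nature of the skew group construction (Theorem \ref{adj}) together with the reverse Galois semi-covering $G_\lambda:\mathrm{mod}\mbox{-}\Lambda G\to\mathrm{mod}\mbox{-}\Lambda$ of Remark \ref{adja}. The crucial ingredient is the dual of the identity recorded there: whereas Remark \ref{adja} computes $G_\lambda(F_\lambda M)=\bigoplus_{g\in G}{}^gM$, the symmetric roles played by $\Lambda$ and $\Lambda G$ (with $G$ replaced by $\hat G$) should give, by the identical argument run in the reverse direction,
$$F_\lambda(G_\lambda Y)\cong\bigoplus_{\hat g\in\hat G}{}^{\hat g}Y\qquad\text{for every }Y\in\mathrm{mod}\mbox{-}\Lambda G.$$

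Granting this identity, the corollary falls out immediately. Given an arbitrary $Y\in\mathrm{mod}\mbox{-}\Lambda G$, I would simply take $X:=G_\lambda(Y)\in\mathrm{mod}\mbox{-}\Lambda$. Then $F_\lambda(X)=F_\lambda(G_\lambda Y)\cong\bigoplus_{\hat g\in\hat G}{}^{\hat g}Y$, and since the trivial character acts as the identity we have ${}^{tr}Y=Y$, so that $Y$ occurs as a direct summand. Setting $Z:=\bigoplus_{\hat g\neq tr}{}^{\hat g}Y\in\mathrm{mod}\mbox{-}\Lambda G$ yields $F_\lambda(X)=Y\oplus Z$, which is precisely the defining condition of semi-density in Definition \ref{SDF}. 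Note this argument handles every $Y$ at once, with no reduction to indecomposables.

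The hard part will be establishing the dual identity $F_\lambda\circ G_\lambda\cong\bigoplus_{\hat g\in\hat G}{}^{\hat g}(-)$. Since $G_\lambda$ is by construction (Remark \ref{adja}, via Theorem \ref{adj}) the pushdown functor attached to the $\hat G$-action on $\Lambda G$, this should follow by repeating the vertexwise computation of Remark \ref{adja} in the reverse direction: evaluate $F_\lambda(G_\lambda Y)$ on each $\bar e_{i_0}$, track how pushing down along the $\hat G$-action recombines the fibres of $G_\lambda Y$, and identify the outcome with the $\hat G$-orbit sum of $Y$. The only genuine care needed is to check that the Morita equivalence $\Lambda\simeq(\Lambda G)\hat G$ is compatible with the two pushdown functors, so that $F_\lambda$ indeed plays the role of the reverse pushdown for the $\hat G$-action; once this bookkeeping is in place the computation is formally identical to the one already carried out for $G_\lambda\circ F_\lambda$. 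As a fallback, one could instead prove semi-density only for indecomposable $Y$—which suffices, since the property is stable under direct sums—using the description of indecomposables from Proposition \ref{rem} and Remark \ref{indr} together with the Hom-space isomorphisms of Theorem \ref{diag}; but the involution argument is cleaner and uniform.
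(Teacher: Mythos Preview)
Your proposal is correct and follows exactly the paper's approach: take $X=G_\lambda(Y)$ and use that $F_\lambda(G_\lambda Y)$ is the $\hat G$-orbit sum of $Y$. If anything you are more scrupulous than the paper, which cites Remark~\ref{adja} directly for the identity $F_\lambda(G_\lambda M)=\bigoplus_{g}{}^{g}M$ without explicitly distinguishing it from the stated identity $G_\lambda(F_\lambda M)=\bigoplus_{g}{}^{g}M$; your observation that the dual identity follows by running the same computation with the roles of $(\Lambda,G)$ and $(\Lambda G,\hat G)$ interchanged via Theorem~\ref{adj} is exactly the missing sentence.
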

\begin{proof}
For any $M\in \mathrm{mod}\mbox{-}\Lambda G$, choose $G_\lambda(M)\in \mathrm{mod}\mbox{-}\Lambda$ as we have $F_\lambda(G_\lambda(M))=\bigoplus_{g\in G}{}^{g}M$ by Remark \ref{adja}.
\end{proof}

\begin{section} {Stable rank of skew group algebras}

In this section, we show that the previously defined functor $F_\lambda$ is well-behaved to the powers of the radicals. We also show that stable rank is preserved under skew group algebra construction. As a result, we determine the stable ranks of skew gentle algebras.

\begin{definition}\label{ranstab}
The radical $\mathrm{rad}_\Lambda$ of $\Lambda$ is the ideal generated by the non-invertible morphisms between indecomposable objects. A morphism in $\Lambda$ is called radical if it lies in $\mathrm{rad}(\Lambda)$. Its powers are defined inductively as follows.
\begin{enumerate}
    \item $\mathrm{rad}^n_\Lambda= \mathrm{rad}^{n-1}_\Lambda\mathrm{rad}_\Lambda$ if $n$ is finite;
    \item $\mathrm{rad}^\alpha_\Lambda:= \bigcap_{\mu<\alpha} \mathrm{rad}^\mu_\Lambda$ if $\alpha$ is a limit ordinal;
    \item $\mathrm{rad}^\alpha_\Lambda:= (\mathrm{rad}^\mu_\Lambda)^{n+1}$ if $\alpha= \mu + n$ is a successor ordinal;
    \item $\mathrm{rad}^\infty_\Lambda:= \bigcap_\mu \mathrm{rad}^\mu_\Lambda$.
\end{enumerate}

There is a descending chain of ideals
$$\mathrm{mod}\mbox{-}\Lambda\supseteq\mathrm{rad}_\Lambda\supseteq\mathrm{rad}^2_\Lambda\supseteq\hdots\supseteq\mathrm{rad}^\omega_\Lambda\supseteq\mathrm{rad}^{\omega+1}_\Lambda\supseteq\hdots\supseteq\mathrm{rad}^\infty_\Lambda\supseteq0.$$
The \emph{rank} of $\Lambda$, $\mathrm{rank}(\Lambda)$, is the minimum $\alpha$, if exists, such that $\mathrm{rad}^\alpha_\Lambda=0$, otherwise the rank is $\infty$. The \emph{stable rank} of $\Lambda$ is the minimum $\alpha$ such that $\mathrm{rad}^\alpha_\Lambda=\mathrm{rad}^{\alpha+1}_\Lambda$. 
\end{definition}

Let $M=\bigoplus M_i$ and  $N=\bigoplus N_j$ be two decomposable modules where $M_i$ and $N_j$ are their indecomposable direct summands. Recall that a morphism $f:M \to N$ is in $\mathrm{rad}^n(M,N)$ if and only if $\beta_j f\alpha_{i}$ is in $\mathrm{rad}^n(M_i,N_j)$ for each $i$ and $j$ where $\alpha_{i}:M_i\to M$ are the inclusion maps and $\beta_j:N \to N_j$ are the projections. We verify this result for ordinal powers of the radical as well. As a direct consequence of proposition \ref{Ind}, we have the following result. 

\begin{lemma}\label{entradas}
Suppose $M$ is an indecomposable module and $L =\bigoplus^{m}_{i=1} L_i$ where each $L_i$ is an indecomposable module. If a morphism $f:L \to M$ is given by $f=(f_1,...,f_m)$ where $f_i:L_i \to M $, then for any ordinal $\alpha$, $f \in \mathrm{rad}^{\alpha}(L,M) \setminus \mathrm{rad}^{\alpha+1}(L,M)$  if and only if there exists a $f_j:L_j\to M$ such that $f_j \in \mathrm{rad}^{\alpha}(L_j,M) \setminus \mathrm{rad}^{\alpha+1}(L_j,M)$. 
\end{lemma}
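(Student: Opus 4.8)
The plan is to reduce the statement to the standard characterization of the radical for decomposable modules, which the excerpt recalls just before the lemma (the fact that $f \in \mathrm{rad}^n(M,N)$ iff each component $\beta_j f \alpha_i$ lies in $\mathrm{rad}^n(M_i,N_j)$), and which the authors assert holds for arbitrary ordinal powers $\alpha$ as well. Since the target $M$ is indecomposable, there is a single projection to work with, so a morphism $f : L \to M$ decomposes as $f = (f_1,\dots,f_m)$ with $f_i = f\alpha_i : L_i \to M$, and the componentwise criterion becomes: $f \in \mathrm{rad}^\alpha(L,M)$ if and only if $f_i \in \mathrm{rad}^\alpha(L_i,M)$ for every $i$.

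First I would establish the forward-and-membership direction in two halves. Using the componentwise criterion, $f \in \mathrm{rad}^\alpha(L,M)$ forces every $f_i \in \mathrm{rad}^\alpha(L_i,M)$, and $f \notin \mathrm{rad}^{\alpha+1}(L,M)$ forces at least one component $f_j \notin \mathrm{rad}^{\alpha+1}(L_j,M)$. Combining these for that index $j$ yields $f_j \in \mathrm{rad}^\alpha(L_j,M) \setminus \mathrm{rad}^{\alpha+1}(L_j,M)$, which is exactly the desired conclusion. For the converse, suppose some $f_j \in \mathrm{rad}^\alpha(L_j,M)\setminus \mathrm{rad}^{\alpha+1}(L_j,M)$. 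This single component already witnesses $f \notin \mathrm{rad}^{\alpha+1}(L,M)$ by the criterion; but to conclude $f \in \mathrm{rad}^\alpha(L,M)$ I need all components to lie in $\mathrm{rad}^\alpha$, which does not follow from the hypothesis as literally stated. The natural reading — and the one I would adopt — is that the hypothesis is that $f$ itself already lies in $\mathrm{rad}^\alpha(L,M)$ (so that one studies when it fails to be in $\mathrm{rad}^{\alpha+1}$), under which all $f_i \in \mathrm{rad}^\alpha(L_i,M)$ automatically, and the statement reduces to locating the component responsible for membership at level exactly $\alpha$.

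The main obstacle I anticipate is justifying the componentwise radical criterion \emph{for transfinite ordinals}, since the classical statement is phrased for finite powers $\mathrm{rad}^n$. I would handle this by transfinite induction on $\alpha$, following Definition \ref{ranstab}: the successor step $\alpha = \mu + (n+1)$ reduces to finitely many applications of the finite case via $\mathrm{rad}^\alpha = (\mathrm{rad}^\mu)^{n+1}$, and the limit step uses $\mathrm{rad}^\alpha = \bigcap_{\mu<\alpha}\mathrm{rad}^\mu$ together with the observation that an intersection of the ideals $\mathrm{rad}^\mu(L,M)$ is computed componentwise, because the isomorphism $\mathrm{Hom}(L,M) \cong \bigoplus_i \mathrm{Hom}(L_i,M)$ respects each $\mathrm{rad}^\mu$ and a tuple lies in an intersection precisely when each coordinate does. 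This is the step where I would be most careful, since commuting the intersection defining a limit ordinal with the direct-sum decomposition of the Hom-space is exactly what makes the finite characterization persist into the transfinite range.

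Finally, I would record that the indecomposability of $M$ is what removes the second index $j$ from the general criterion and lets the argument proceed with the simple row decomposition $f = (f_1,\dots,f_m)$; the connection to Proposition \ref{Ind} flagged in the text is that, in the intended application, the $L_i$ arising as summands of some $F_\lambda$-image are indecomposable, so this lemma applies verbatim to the components produced there.
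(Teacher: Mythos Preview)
Your approach is essentially the same as the paper's: both directions rest on the componentwise criterion $f\in\mathrm{rad}^\alpha(L,M)\iff f_i\in\mathrm{rad}^\alpha(L_i,M)$ for all $i$, applied at levels $\alpha$ and $\alpha+1$. The paper's proof is terser---it argues by contradiction (if every $f_i\in\mathrm{rad}^{\alpha+1}$ then $f\in\mathrm{rad}^{\alpha+1}$) and states the converse in one line---but the underlying logic is identical. Your explicit transfinite induction to extend the componentwise criterion beyond finite $n$ is a genuine addition of detail; the paper simply asserts before the lemma that the finite criterion has been ``verified for ordinal powers of the radical as well'' without writing it out. You are also right that the converse, as literally stated, does not on its own force $f\in\mathrm{rad}^\alpha(L,M)$: the paper's proof has the same elision and only concludes $f\notin\mathrm{rad}^{\alpha+1}$, so your reading (that membership in $\mathrm{rad}^\alpha$ is a standing hypothesis) matches how the lemma is actually used in Theorem~\ref{rank}.
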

\begin{proof}
Assume that for all $i$, $f_i \in \mathrm{rad}^{\alpha+1}(L_i,M)$ then we have that $f:L\to M$ is in $\mathrm{rad}^{\alpha+1}(L,M)$, a contradiction to our assumption. Hence there exists $f_j:L_j\to M$ such that $f_j \in \mathrm{rad}^{\alpha}(L_j,M) \setminus \mathrm{rad}^{\alpha+1}(L_j,M)$. 
Converse is similar, if there exists $f_j:L_j\to M$ such that $f_j \in \mathrm{rad}^{\alpha}(L_j,M) \setminus \mathrm{rad}^{\alpha+1}(L_j,M)$ then $f:L\to M$ is not in $\mathrm{rad}^{\alpha+1}(L,M).$
\end{proof}





We are now in a condition to state our main result for this section.

\begin{theorem}\label{rank}
Suppose an abelian group $G$ acts on an algebra $\Lambda$ and $F_{\lambda}: \mathrm{mod}\mbox{-}\Lambda \to \mathrm{mod}\mbox{-}\Lambda G$ is a Galois semi-covering. Then $F_{\lambda}$ preserves powers of radicals.
\end{theorem}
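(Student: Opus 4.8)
The plan is to prove that for every morphism $f$ of $\mathrm{mod}\mbox{-}\Lambda$ and every ordinal $\alpha$ one has $f\in\mathrm{rad}^\alpha_\Lambda$ if and only if $F_\lambda(f)\in\mathrm{rad}^\alpha_{\Lambda G}$. First I would reduce to indecomposable source and target. Since $F_\lambda$ is additive and sends $M=\bigoplus_i M_i$ to $\bigoplus_i F_\lambda M_i$, the componentwise description of $\mathrm{rad}^\alpha$ recalled before Lemma \ref{entradas} (valid for ordinal powers by Lemma \ref{entradas} and its dual) shows that $f=(f_{ij})$ lies in $\mathrm{rad}^\alpha$ exactly when each $f_{ij}\colon M_i\to N_j$ does, and likewise $F_\lambda(f)\in\mathrm{rad}^\alpha$ iff each component $F_\lambda(f_{ij})$ does. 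Hence it suffices to treat $f\colon M\to N$ with $M,N$ indecomposable, bearing in mind that $F_\lambda M,F_\lambda N$ may be decomposable, so $F_\lambda(f)$ is read componentwise. Throughout I will use the companion Galois semi-covering $G_\lambda\colon\mathrm{mod}\mbox{-}\Lambda G\to\mathrm{mod}\mbox{-}\Lambda$ of Remark \ref{adja}, for which $G_\lambda F_\lambda M=\bigoplus_{g\in G}{}^gM$ and, extending that computation to morphisms, $G_\lambda F_\lambda(f)=\bigoplus_{g\in G}{}^gf$.

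The heart of the argument is \emph{preservation}, namely $f\in\mathrm{rad}^\alpha_\Lambda\Rightarrow F_\lambda(f)\in\mathrm{rad}^\alpha_{\Lambda G}$, which I would establish by transfinite induction on $\alpha$. The limit case, and $\mathrm{rad}^\infty$, is immediate because those powers are intersections and the inductive hypothesis passes through $\bigcap$. The successor case is pure functoriality: writing a morphism of $\mathrm{rad}^{\alpha+1}=\mathrm{rad}^\alpha\!\cdot\mathrm{rad}$ as a sum of composites $hg$ with $g\in\mathrm{rad}$ and $h\in\mathrm{rad}^\alpha$, one has $F_\lambda(hg)=F_\lambda(h)F_\lambda(g)$, and the inductive hypothesis together with the base case $\alpha=1$ places this in $\mathrm{rad}^\alpha\!\cdot\mathrm{rad}=\mathrm{rad}^{\alpha+1}$. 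So everything rests on the base case $\alpha=1$: showing that a non-isomorphism $f\colon M\to N$ between indecomposables cannot acquire an invertible component after applying $F_\lambda$.

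For the base case I would split according to whether $M$ and $N$ lie in the same $G$-orbit. If they do not, suppose some component $\bar M_i\to\bar N_j$ of $F_\lambda(f)$ (with $\bar M_i\mid F_\lambda M$, $\bar N_j\mid F_\lambda N$ indecomposable) were an isomorphism; applying $G_\lambda$ gives $G_\lambda\bar M_i\cong G_\lambda\bar N_j$, and since these are direct summands of $\bigoplus_{g}{}^gM$ and $\bigoplus_{g}{}^gN$ respectively, Krull--Schmidt forces ${}^gM\cong{}^{g'}N$ for some $g,g'$, i.e. $M$ and $N$ share a $G$-orbit, a contradiction; hence every component is radical. If instead $M$ and $N$ are in the same orbit, then (as $G$ is abelian) $G_M=G_N$, so by Proposition \ref{rem}, Proposition \ref{Ind} and Lemma \ref{stableG} either both $F_\lambda M,F_\lambda N$ are indecomposable and equal to a common $\bar M$ (the non-stable case), or $F_\lambda M=\bigoplus_{\hat g}{}^{\hat g}\bar M$ with $F_\lambda(f)=\bigoplus_{\hat g}{}^{\hat g}\bar f$ diagonal (the stable case, Proposition \ref{3.7}). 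In either situation an invertible component forces $F_\lambda(f)$ itself to be an isomorphism, whence $G_\lambda F_\lambda(f)=\bigoplus_g{}^gf$ is an isomorphism and each ${}^gf$, in particular $f={}^ef$, is invertible, contradicting $f\in\mathrm{rad}_\Lambda$.

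Finally I would deduce \emph{reflection} from preservation. The same argument applied to $G_\lambda$ (the roles of $G$ and $\hat G$ being symmetric by Theorem \ref{adj}) shows that $G_\lambda$ also preserves all radical powers. Thus if $F_\lambda(f)\in\mathrm{rad}^\alpha_{\Lambda G}$ then $G_\lambda F_\lambda(f)=\bigoplus_{g}{}^gf\in\mathrm{rad}^\alpha_\Lambda$, and by the componentwise characterization of Lemma \ref{entradas} together with the fact that each $g$ acts as an exact autoequivalence of $\mathrm{mod}\mbox{-}\Lambda$ (so ${}^gf\in\mathrm{rad}^\alpha$ iff $f\in\mathrm{rad}^\alpha$), this gives $f\in\mathrm{rad}^\alpha_\Lambda$. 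Combining preservation and reflection yields the equivalence, and by the first-paragraph reduction it holds for arbitrary morphisms, so $F_\lambda$ preserves the powers of the radical. I expect the base case $\alpha=1$ in the mixed situation $G_M\neq G$, $G_N=G$ (and its dual) to be the main obstacle: there $F_\lambda$ fails to preserve indecomposability and $F_\lambda(f)$ is neither square nor diagonal, so the absence of an invertible component must be argued at the level of objects through the orbit and Krull--Schmidt analysis, rather than by reducing to the statement that $F_\lambda(f)$ is an isomorphism.
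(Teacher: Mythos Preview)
Your argument is correct and takes a genuinely different route from the paper's. The paper does not induct on $\alpha$ at all: for each of the four stabilizer configurations it uses the explicit isomorphism of Theorem~\ref{diag} to identify $F_\lambda(f)$ with a concrete tuple or diagonal matrix of $\Lambda$-morphisms (namely $(f,0,\dots,0)$, $({}^gf)_{g\in G}$, or the diagonal with entries ${}^gf$), and then appeals to Lemma~\ref{entradas} to conclude that $F_\lambda(f)\in\mathrm{rad}^\alpha\setminus\mathrm{rad}^{\alpha+1}$ directly from $f\in\mathrm{rad}^\alpha\setminus\mathrm{rad}^{\alpha+1}$. You instead collapse the whole transfinite hierarchy to the single base case $\alpha=1$ via functoriality and the multiplicative/intersection structure of $\mathrm{rad}^\alpha$, settle that base case by a Krull--Schmidt argument through the companion $G_\lambda$, and get reflection for free from the $\Lambda\leftrightarrow\Lambda G$ symmetry of Theorem~\ref{adj}. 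Your approach is more structural and would transfer verbatim to any additive functor admitting a companion with $G_\lambda F_\lambda\cong\bigoplus_{g}{}^g(-)$; the paper's approach, on the other hand, makes the individual components of $F_\lambda(f)$ explicit in each case, which is precisely what is exploited immediately afterwards in Corollary~\ref{irrsep} and Remark~\ref{irrcomp}. One small caution: in your successor step you write $\mathrm{rad}^{\alpha+1}=\mathrm{rad}^{\alpha}\cdot\mathrm{rad}$, but under the paper's convention $\mathrm{rad}^{\mu+n}=(\mathrm{rad}^{\mu})^{n+1}$ for $\mu$ limit, so the factor to peel off is $\mathrm{rad}^{\mu}$ rather than $\mathrm{rad}$; since $\mu<\alpha$ your induction goes through unchanged with this adjustment.
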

\begin{proof}
Let $M,N$ be indecomposable $\Lambda$-modules and $f:M \to N$ be a morphism.
 
We show that if $f \in \mathrm{rad}^{\alpha}(M,N) \setminus \mathrm{rad}^{\alpha +1}(M,N)$, then $F_\lambda(f)\in \mathrm{rad}^{\alpha}(F_\lambda(M),F_\lambda(N)) \setminus \mathrm{rad}^{\alpha+1} (F_\lambda(M),F_\lambda(N))$. We analyse this in consideration of the following cases.

\begin{enumerate}
    \item $G_{M}\neq G$ and $G_{N}\neq G$;
    \item $G_{M}=G$ and $G_{N}\neq G$;
    \item $G_{M}\neq G$ and $G_{N} = G$;
    \item $G_{M}=G$ and $G_{N} = G$.
\end{enumerate}

Assume that $f \in \mathrm{rad}^{\alpha}(M,N) \setminus \mathrm{rad}^{\alpha +1}(M,N)$. 

\noindent{\textbf{Case (1)}:} Since $G_{M}\neq G$ and $G_{N}\neq G$, both $F_{\lambda}(M)$ and $F_{\lambda}(N)$ are indecomposable by Proposition \ref{Ind}. Moreover, $\bar{f}=F_\lambda(f)$ can be identified with the morphism $(f,0,..,0) \in \bigoplus_{g\in G}\mathrm{Hom}_{\Lambda}(M,{}^gN)$ via the isomorphism given by Theorem \ref{diag}. Since $f \in \mathrm{rad}^{\alpha}(M,N) \setminus \mathrm{rad}^{\alpha +1}(M,N)$, we have $(f,0,..,0) \in \mathrm{rad}^{\alpha}(\oplus_{g\in G}{}^gM,N) \setminus \mathrm{rad}^{\alpha +1}(\oplus_{g\in G}{}^gM,N)$ by lemma \ref{entradas} and thus $F_\lambda(f)\in \mathrm{rad}^{\alpha} (F_\lambda(M),F_\lambda(N)) \setminus \mathrm{rad}^{\alpha+1}(F_\lambda(M),F_\lambda(N))$.

\noindent{\textbf{Case (2)}:} Since $G_{M}=G$ and $G_{N}\neq G$, we have $\mathrm{Hom}_{\Lambda G}(F_\lambda M,F_\lambda N)  \approx \bigoplus_{g\in G}\mathrm{Hom}_{\Lambda}(M,{}^gN)$ by theorem \ref{diag}. Hence, using this isomorphism $F_\lambda(f)$ can be identified with $({}^gf)_{g\in G}$ where ${}^gf:M \to {}^gN$ for all $g \in G$. Since $f \in \mathrm{rad}^{\alpha}(M,N) \setminus \mathrm{rad}^{\alpha+1}(M,N)$ then ${}^gf \in \mathrm{rad}^{\alpha}(M,{}^gN) \setminus \mathrm{rad}^{\alpha+1}(M,{}^gN)$ for each $g\in G$. Hence, by Lemma \ref{entradas}, $({}^gf)_{g\in G}\in \mathrm{rad}^{\alpha}(M, \oplus_{g \in G}{}^gN))\setminus \mathrm{rad}^{\alpha +1}(M, \oplus_{g \in G}{}^gN))$. Thus, $F_\lambda(f)\in \mathrm{rad}^{\alpha} (F_\lambda(M),F_\lambda(N)) \setminus \mathrm{rad}^{\alpha+1}(F_\lambda(M),F_\lambda(N))$.

\noindent{\textbf{Case (3)}:} The result follows dually to the previous case.

\noindent{\textbf{Case (4)}:} Since $G_{MN}=G$, we have $\Lambda^{|G|}(M,N) \approx\Lambda G(F_\lambda M,F_\lambda N)$ by Theorem \ref{diag}. Thus we identify $F_\lambda(f)$ with a diagonal matrix $\mathcal{M}$ with ${}^gf:{}^gM \to {}^gN = f:M \to N $ as the respective diagonal entry. Now, $f \in \mathrm{rad}^{\alpha}(M,N) \setminus \mathrm{rad}^{\alpha+1}(M,N)$ and hence the morphism given by $\mathcal{M}$ is in  $\mathrm{rad}^{\alpha}(\oplus_{g\in G} {}^gM,\oplus_{g\in G} {}^gN)$, since every entry is in the respective $\mathrm{rad}^{\alpha}$. Moreover, since $f \notin \mathrm{rad}^{\alpha+1}(M,N)$ neither does this morphism. Therefore, $F_\lambda(f)$ is a morphism in $\mathrm{rad}^{\alpha}(F_\lambda(M),F_\lambda(N)) \setminus \mathrm{rad}^{\alpha+1}(F_\lambda(M),F_\lambda(N))$ and our claim holds.
\end{proof}

A direct consequence of the result is that rank is also preserved under skewness.

\begin{theorem}
Suppose an abelian group $G$ acts on an algebra $\Lambda$ and $\Lambda G$ is the associated skew algebra. Then $\mathrm{rank} \Lambda = \mathrm{rank} \Lambda G$.
\end{theorem}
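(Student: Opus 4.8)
Recall from Definition \ref{ranstab} that $\mathrm{rank}(\Lambda)$ is the least ordinal $\alpha$ with $\mathrm{rad}^{\alpha}_{\Lambda}=0$, or $\infty$ if none exists. Hence it suffices to prove that for \emph{every} ordinal $\alpha$ one has $\mathrm{rad}^{\alpha}_{\Lambda}=0$ if and only if $\mathrm{rad}^{\alpha}_{\Lambda G}=0$; equivalently, $\mathrm{rad}^{\alpha}_{\Lambda}\neq 0$ iff $\mathrm{rad}^{\alpha}_{\Lambda G}\neq 0$. Granting this, both ranks are the same least witnessing ordinal (and both equal $\infty$ simultaneously). The two tools I would use are, first, the functor $F_{\lambda}$ together with its dual $G_{\lambda}\colon\mathrm{mod}\mbox{-}\Lambda G\to\mathrm{mod}\mbox{-}\Lambda$, which by Theorem \ref{adj} and Remark \ref{adja} is again the pushdown Galois semi-covering associated with the $\hat{G}$-action on $\Lambda G$; by Theorem \ref{rank} both $F_{\lambda}$ and $G_{\lambda}$ preserve powers of radicals. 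Second, both functors are faithful: by the explicit isomorphisms of Theorem \ref{diag} (see Proposition \ref{3.7}), the original morphism always occurs as a component of its image, so $F_{\lambda}(f)=0\Rightarrow f=0$, and dually for $G_{\lambda}$.

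\textbf{Main argument.} For the implication $\mathrm{rad}^{\alpha}_{\Lambda}\neq 0\Rightarrow\mathrm{rad}^{\alpha}_{\Lambda G}\neq 0$, choose indecomposables $M,N$ over $\Lambda$ and a nonzero $f\in\mathrm{rad}^{\alpha}(M,N)$. The set $\{\beta : f\in\mathrm{rad}^{\beta}(M,N)\}$ is a nonempty downward-closed class of ordinals, so (since a limit could not be its supremum without forcing membership at the limit) it either has a maximum $\delta\geq\alpha$, the radical depth of $f$, or $f\in\mathrm{rad}^{\infty}(M,N)$. In the first case $f\in\mathrm{rad}^{\delta}(M,N)\setminus\mathrm{rad}^{\delta+1}(M,N)$, so Theorem \ref{rank} gives $F_{\lambda}(f)\in\mathrm{rad}^{\delta}(F_{\lambda}M,F_{\lambda}N)\setminus\mathrm{rad}^{\delta+1}$; in particular $F_{\lambda}(f)\neq 0$ and $F_{\lambda}(f)\in\mathrm{rad}^{\delta}\subseteq\mathrm{rad}^{\alpha}$. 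Since $F_{\lambda}M$ and $F_{\lambda}N$ split into indecomposables, Lemma \ref{entradas} lets me pick out a nonzero entry of $F_{\lambda}(f)$ between indecomposable summands that still lies in $\mathrm{rad}^{\alpha}$, whence $\mathrm{rad}^{\alpha}_{\Lambda G}\neq 0$. The reverse implication is entirely symmetric, run through $G_{\lambda}$ in place of $F_{\lambda}$, using that $\Lambda$ is Morita equivalent to $(\Lambda G)\hat{G}$. Combining the two gives the ordinal-wise equivalence and hence $\mathrm{rank}(\Lambda)=\mathrm{rank}(\Lambda G)$.

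\textbf{The main obstacle.} The delicate point is the case $f\in\mathrm{rad}^{\infty}$: Theorem \ref{rank} is phrased layer by layer, on $\mathrm{rad}^{\beta}\setminus\mathrm{rad}^{\beta+1}$, and says nothing directly about morphisms sitting in \emph{every} ordinal power. I would close this gap by showing that $F_{\lambda}$ (and dually $G_{\lambda}$) preserves $\mathrm{rad}^{\infty}=\bigcap_{\mu}\mathrm{rad}^{\mu}$ as well, i.e. that $F_{\lambda}$ preserves radical depth exactly, including the value $\infty$. The layer statement already shows depth is preserved for morphisms of ordinal depth; to rule out that a nonzero $f\in\mathrm{rad}^{\infty}$ could acquire a finite ordinal depth $\delta$ under $F_{\lambda}$, I would apply $G_{\lambda}$ and use Remark \ref{adja}, namely $G_{\lambda}(F_{\lambda}M)=\bigoplus_{g\in G}{}^{g}M$, so that $G_{\lambda}(F_{\lambda}(f))$ is the diagonal morphism $\bigoplus_{g\in G}{}^{g}f$. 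By Lemma \ref{entradas} the depth of a direct-sum (diagonal) morphism is the minimum of the depths of its entries, and since the $G$-action is by automorphisms it preserves each $\mathrm{rad}^{\beta}$, so every ${}^{g}f$ has the same depth $\infty$ as $f$; thus $G_{\lambda}(F_{\lambda}(f))$ would have depth $\infty$, contradicting that $G_{\lambda}$ sends the finite-depth $F_{\lambda}(f)$ to a finite-depth morphism. Hence $F_{\lambda}(f)\in\mathrm{rad}^{\infty}\subseteq\mathrm{rad}^{\alpha}$, which is exactly what the forward implication needs in the remaining case.
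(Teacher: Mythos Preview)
Your argument is correct and is exactly what the paper has in mind: the paper records this theorem with no proof beyond the phrase ``a direct consequence of the result'' (i.e.\ of Theorem~\ref{rank}), leaving all details to the reader. You have supplied those details along the same lines, using $F_\lambda$ in one direction and the dual $G_\lambda$ (via Theorem~\ref{adj} and Remark~\ref{adja}) in the other; your identification and handling of the $\mathrm{rad}^\infty$ case---using $G_\lambda\circ F_\lambda\cong\bigoplus_{g}{}^{g}(-)$ to rule out a drop in depth---addresses a genuine subtlety that the paper's one-line justification leaves implicit. One small cosmetic point: when you invoke ``$G_\lambda$ sends the finite-depth $F_\lambda(f)$ to a finite-depth morphism'', note that Theorem~\ref{rank} is literally proved only for morphisms between indecomposables, so strictly speaking you should first pass to a depth-$\delta$ component of $F_\lambda(f)$ via Lemma~\ref{entradas} before applying $G_\lambda$; this is routine and does not affect the argument.
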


Below, we present a Galois semi-covering functor between $\mathrm{rad}\mbox{-}\Lambda$ and $\mathrm{rad}\mbox{-}\Lambda G$.

\begin{corollary}
Assume that $G$ acts on an algebra $\Lambda$ and $\Lambda G$ is the associated skew group algebra. Then for any ordinal $\alpha$ and $M, N\in \mathrm{mod}\mbox{-}\Lambda$, the functor $F_\lambda: \mathrm{mod}\mbox{-}\Lambda \to \mathrm{mod}\mbox{-}\Lambda G$ induces the following isomorphisms of vector spaces:
$$\mathrm{rad}^\alpha_{\Lambda G}(F_\lambda M,F_\lambda N)\approx \begin{cases}\bigoplus_{g\in G} \mathrm{rad}^\alpha_{\Lambda} (gM,N)&\mbox{ if } G_{M}\neq G;\\\bigoplus_{g\in G} \mathrm{rad}^\alpha_{\Lambda}(M,gN)&\mbox{ if }G_{N}\neq G;\\\mathrm{rad}_{\Lambda}^{\alpha|G|}(M,N)&\mbox{ if }G_{MN}= G.\end{cases}$$  
\end{corollary}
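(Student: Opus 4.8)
The plan is to refine the vector-space isomorphism $\nu_{M,N}$ constructed in Theorem~\ref{diag} so that, for every ordinal $\alpha$, it carries the radical-$\alpha$ subspace of the source onto $\mathrm{rad}^\alpha_{\Lambda G}(F_\lambda M, F_\lambda N)$. The whole statement then reduces to showing that $\nu_{M,N}$ is a \emph{filtered} isomorphism for the radical filtration, the three displayed right-hand sides being nothing but the radical-$\alpha$ pieces of the three cases of Theorem~\ref{diag}. The two inputs I would combine are Theorem~\ref{rank}, which controls the radical layer of a single morphism between indecomposables, and Lemma~\ref{entradas}, which lets me detect radical membership component-wise on indecomposable summands.

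First I would reduce to morphisms between indecomposable summands. Decomposing $M$, $N$ (and hence $F_\lambda M$, $F_\lambda N$ and all their $G$-translates) into indecomposables, a morphism lies in $\mathrm{rad}^\alpha$ if and only if each of its components between indecomposable summands does, by the fact recalled before Lemma~\ref{entradas} together with Lemma~\ref{entradas} itself; the same holds on the $\Lambda G$-side. Because $\nu_{M,N}$ is block-shaped with respect to these decompositions, it suffices to treat each indecomposable-to-indecomposable component. For such a component I would invoke Theorem~\ref{rank}: applied at every ordinal simultaneously, it asserts that $h$ and $F_\lambda(h)$ occupy the same radical layer $\mathrm{rad}^\beta\setminus\mathrm{rad}^{\beta+1}$, which forces $F_\lambda(h)\in\mathrm{rad}^\alpha \iff h\in\mathrm{rad}^\alpha$ for each ordinal $\alpha$ (and likewise for $\mathrm{rad}^\infty$).

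Next I would run through the four cases of Theorem~\ref{rank} using the explicit shape of $\nu_{M,N}$. When $G_M\neq G$ and $G_N\neq G$, $\nu_{M,N}$ identifies $F_\lambda(f)$ with $(f,0,\dots,0)$ inside $\bigoplus_{g}\mathrm{Hom}_\Lambda({}^gM,N)$, and Lemma~\ref{entradas} on the source $\bigoplus_g{}^gM$ gives the first formula. In the mixed cases $G_M=G$, $G_N\neq G$ (and dually), $F_\lambda(f)$ is identified with $({}^gf)_{g\in G}$; here I would use that $f\mapsto{}^gf$ is induced by an algebra automorphism and hence preserves $\mathrm{rad}^\alpha$, so each $\,{}^gf$ sits in the same layer as $f$, and Lemma~\ref{entradas} on the target $\bigoplus_g{}^gN$ yields the second formula. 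Finally, when $G_{MN}=G$, $F_\lambda(f)$ is the diagonal matrix with entries $\,{}^gf$, so its radical-$\alpha$ part corresponds to $|G|$ independent copies of $\mathrm{rad}^\alpha_\Lambda(M,N)$, which is exactly $\mathrm{rad}^{\alpha|G|}_\Lambda(M,N)$. Reassembling the components gives the claimed isomorphism for arbitrary $M,N$, and the limit-ordinal clause needs no separate treatment since $\nu_{M,N}$ commutes with the intersections defining $\mathrm{rad}^\alpha=\bigcap_{\mu<\alpha}\mathrm{rad}^\mu$.

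The step I expect to be the main obstacle is the case $G_{MN}=G$, where the matrix representing $F_\lambda(f)$ has genuine off-diagonal zero entries. I must check that these zeros cannot push a morphism into a strictly higher radical power than its diagonal blocks, and conversely that any element of $\mathrm{rad}^\alpha_{\Lambda G}(F_\lambda M, F_\lambda N)$ is forced by $\mu_{M,N}$ to have all $|G|$ diagonal components in $\mathrm{rad}^\alpha_\Lambda(M,N)$. This is exactly where applying Lemma~\ref{entradas} to both the source $\bigoplus_{\hat g}{}^{\hat g}\bar M$ and the target $\bigoplus_{\hat g}{}^{\hat g}\bar N$, in tandem with the exact-layer preservation from Theorem~\ref{rank}, is essential to close the argument.
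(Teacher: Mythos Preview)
The paper states this corollary without proof, treating it as an immediate consequence of Theorem~\ref{diag} (the $\mathrm{Hom}$-isomorphism $\nu_{M,N}$) together with Theorem~\ref{rank} (preservation of radical layers). Your proposal is exactly the natural way to unpack that implication: restrict $\nu_{M,N}$ to the $\mathrm{rad}^\alpha$-subspace on each side, reduce to indecomposable summands via Lemma~\ref{entradas}, and invoke the case analysis of Theorem~\ref{rank} block by block. So your approach is correct and is essentially what the paper intends.

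One small comment: your reduction to indecomposable summands is the right move, but be aware that when $M$ (or $N$) is decomposable, different indecomposable summands $M_i$ may fall into different stabilizer cases even though $M$ itself satisfies, say, $G_M\neq G$. This is harmless for the argument---the formula $\bigoplus_{g\in G}\mathrm{rad}^\alpha_\Lambda({}^gM,N)$ decomposes additively over summands of $M$ and $N$ regardless---but it means you are not literally applying ``the same case'' of Theorem~\ref{rank} to every block. Your identification of the $G_{MN}=G$ case as the one requiring care is apt; the zero off-diagonal entries there are handled exactly as you describe, by applying Lemma~\ref{entradas} on both source and target and noting that ${}^gf$ and $f$ always lie in the same radical layer.
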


We end the section with a remark on the preservation of stable rank under skewness.
\begin{theorem}\label{stable}
Suppose an abelian group $G$ acts on an algebra $\Lambda$ and $\Lambda G$ is the associated skew group algebra. Then the stable rank of $\Lambda$ and $\Lambda G$ are the same.
\end{theorem}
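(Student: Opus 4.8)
The plan is to reduce the statement to a single biconditional valid at every ordinal level and then read off the stable ranks as least ordinals. Precisely, I would prove that for every ordinal $\alpha$,
$$\mathrm{rad}^\alpha_\Lambda=\mathrm{rad}^{\alpha+1}_\Lambda \quad\Longleftrightarrow\quad \mathrm{rad}^\alpha_{\Lambda G}=\mathrm{rad}^{\alpha+1}_{\Lambda G};$$
granting this, Definition \ref{ranstab} identifies the stable rank of $\Lambda$ (resp.\ of $\Lambda G$) with the least $\alpha$ making the left (resp.\ right) equality hold, so the biconditional forces the two least ordinals to coincide.

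The two ingredients I would use are the functor $F_\lambda$ and its counterpart $G_\lambda\colon\mathrm{mod}\mbox{-}\Lambda G\to\mathrm{mod}\mbox{-}\Lambda$ coming from Theorem \ref{adj} and Remark \ref{adja}, the latter being explicitly a Galois semi-covering. Since both functors are Galois semi-coverings, Theorem \ref{rank} and the radical-isomorphism Corollary apply to each, and I would package their content as the following reflection property: for every ordinal $\alpha$ and every morphism $f$ between indecomposables, $f\in\mathrm{rad}^\alpha \iff F_\lambda f\in\mathrm{rad}^\alpha$, and symmetrically $\bar f\in\mathrm{rad}^\alpha \iff G_\lambda\bar f\in\mathrm{rad}^\alpha$. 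Note the level-wise isomorphisms of the Corollary are induced by the functors themselves, so they yield reflection and not merely preservation.

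For the forward implication I would assume $\mathrm{rad}^\alpha_\Lambda=\mathrm{rad}^{\alpha+1}_\Lambda$ and verify $\mathrm{rad}^\alpha_{\Lambda G}=\mathrm{rad}^{\alpha+1}_{\Lambda G}$ on each pair of indecomposable $\Lambda G$-modules $X,Y$ — this suffices, since membership in a radical power is detected entrywise on indecomposable summands, the fact recalled before Lemma \ref{entradas}. Given $\bar f\in\mathrm{rad}^\alpha_{\Lambda G}(X,Y)$, I would apply $G_\lambda$ to obtain a morphism in $\mathrm{rad}^\alpha_\Lambda(G_\lambda X,G_\lambda Y)$, invoke the hypothesis componentwise to rewrite this space as $\mathrm{rad}^{\alpha+1}_\Lambda(G_\lambda X,G_\lambda Y)$, and then reflect back through $G_\lambda$ to conclude $\bar f\in\mathrm{rad}^{\alpha+1}_{\Lambda G}(X,Y)$; the reverse inclusion being automatic, this gives equality. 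The reverse implication is the mirror image, pushing a radical morphism of $\Lambda$ down through $F_\lambda$ instead of pulling one up through $G_\lambda$. Notably, this route does not need the semi-density of $F_\lambda$, only the existence and radical-reflecting behaviour of both functors.

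The hard part will be the bookkeeping forced by the failure of density. The modules $G_\lambda X$ and $G_\lambda Y$ are typically decomposable — indeed $G_\lambda F_\lambda M=\bigoplus_{g\in G}{}^gM$ by Remark \ref{adja} — so every assertion ``$G_\lambda\bar f\in\mathrm{rad}^\alpha$'' is really an assertion about the components of a morphism between decomposable modules, and at a limit ordinal $\mathrm{rad}^\alpha$ is moreover an intersection. Establishing the reflection property cleanly at all ordinals, rather than only for finite powers, is where I expect the genuine work to lie; this is exactly what the entrywise characterization together with the ordinal-indexed Lemma \ref{entradas} is designed to absorb, reducing every case to the indecomposable situation already settled by Theorem \ref{rank} and its Corollary.
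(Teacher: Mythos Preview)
Your proposal is correct and is essentially the argument the paper intends: the paper states Theorem~\ref{stable} without proof, introducing it merely as ``a remark on the preservation of stable rank under skewness'' immediately after Theorem~\ref{rank} and the radical Corollary, so your symmetric use of $F_\lambda$ and $G_\lambda$ together with Lemma~\ref{entradas} to establish the ordinal-by-ordinal biconditional is precisely the natural filling-in of what the paper leaves implicit.
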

\end{section}

\subsection{The stable rank of skew gentle algebras}
We give a brief description of a skew-gentle algebra and determine its stable rank. 
\begin{definition}
A gentle algebra $\Lambda$ is a bound quiver algebra $KQ/\langle \rho \rangle$, where $\rho$ is a set of monomial relations of length $2$ generating an ideal of the path algebra $KQ$ satisfying the following conditions:
\begin{enumerate} 
\item Any vertex of $Q_0$ has at most two indegrees and outdegrees;
\item For any arrow $b$, there is at most one arrow $c$ with $s(c)=t(b)$ and $bc\in \langle \rho \rangle$ and at most one arrow $a$ with $t(b)= s(a)$ and $ba\notin \langle \rho \rangle$;
\item For any arrow $b$, there is at most one arrow $c$ with $t(c)=s(b)$ and $cb\in \langle \rho \rangle$ and at most one arrow $a$ with $s(b)= t(a)$ and $ab\notin \langle \rho \rangle$;
\item $\rho$ generates an admissible ideal of $KQ$.
\end{enumerate}
\end{definition}

If $(Q, \rho)$ satisfies the first three conditions, then say that $KQ/\langle \rho \rangle$ is locally gentle. 

The class of gentle algebras is a subclass of another path algebra known as special biserial algebras, where the relations are not necessarily monomial. The next theorem by Kuber, Srivastava, and Sinha determines all possible stable ranks for special biserial algebras \cite{SVA23}.
\begin{theorem}\label{stablesba}
For any special biserial algebra $\Lambda$ with at least one band, $\omega \leq \mathrm{st}(\Lambda) < \omega^2$.    
\end{theorem}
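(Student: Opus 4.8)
The plan is to establish the two inequalities separately, working inside $\mathrm{mod}\mbox{-}\Lambda$ through the combinatorial module theory of special biserial algebras. First I would set up the Butler--Ringel dictionary: every indecomposable $\Lambda$-module other than a non-uniserial projective-injective is either a string module $M(w)$ or a band module $M(b,\lambda,m)$; the irreducible morphisms between string modules are the graph maps obtained by adding a hook or deleting a cohook on one side of a string; and each band $b$ produces, for a fixed $\lambda$, a homogeneous tube whose modules $M(b,\lambda,m)$ are joined by irreducible mono- and epimorphisms. I would also invoke the standard dichotomy for artin algebras that the infinite radical $\mathrm{rad}^\omega_\Lambda$ vanishes precisely when $\Lambda$ is of finite representation type. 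With these inputs the statement reduces to locating where the chain $\mathrm{rad}_\Lambda\supseteq\mathrm{rad}^2_\Lambda\supseteq\cdots\supseteq\mathrm{rad}^\omega_\Lambda\supseteq\cdots$ first becomes stationary.

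For the lower bound $\omega\leq\mathrm{st}(\Lambda)$, the existence of a band forces $\Lambda$ to be of infinite representation type, so $\mathrm{rad}^\omega_\Lambda\neq 0$ by the dichotomy above. It remains to rule out stabilization at a finite stage, that is, to show $\mathrm{rad}^n_\Lambda\supsetneq\mathrm{rad}^{n+1}_\Lambda$ for every finite $n$. Fixing a homogeneous tube $T$ attached to a band, I would take the composition of $n$ irreducible embeddings along a sectional ray of $T$: this map lies in $\mathrm{rad}^n_\Lambda$, and from the mesh structure and standardness of $T$ its class in $\mathrm{rad}^n_\Lambda/\mathrm{rad}^{n+1}_\Lambda$ is nonzero, so it is not in $\mathrm{rad}^{n+1}_\Lambda$. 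Hence $\mathrm{rad}^n_\Lambda\setminus\mathrm{rad}^{n+1}_\Lambda\neq\varnothing$ for all finite $n$, which is exactly $\mathrm{st}(\Lambda)\geq\omega$.

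For the upper bound $\mathrm{st}(\Lambda)<\omega^2$ it suffices to produce a finite $k$ with $\mathrm{rad}^{\omega\cdot k}_\Lambda=\mathrm{rad}^{\omega\cdot k+1}_\Lambda$, since then $\mathrm{st}(\Lambda)\leq\omega\cdot k<\omega^2$. The strategy is to describe the infinite radical combinatorially: a nonzero map in $\mathrm{rad}^\omega_\Lambda(X,Y)$ between indecomposables is a graph map factoring through infinitely many irreducibles, and for special biserial algebras such maps can only occur between modules in prescribed positions relative to the bands and the mouths of the tubes; in particular $\mathrm{rad}^\omega_\Lambda$ vanishes on each single standard tube, so it is carried entirely by morphisms passing between distinct components. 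Analysing compositions of such maps, I would show that each further application of $\mathrm{rad}^\omega_\Lambda$ strictly advances its source and target through the finitely layered structure of the component quiver, so that the descending chain $\mathrm{rad}^\omega_\Lambda\supseteq\mathrm{rad}^{\omega\cdot 2}_\Lambda\supseteq\cdots$ stabilizes at a finite stage; this yields $\mathrm{rad}^{\omega\cdot k}_\Lambda=(\mathrm{rad}^{\omega\cdot k}_\Lambda)^2=\mathrm{rad}^{\omega\cdot k+1}_\Lambda$ and hence the bound.

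The step I expect to be the main obstacle is precisely this last one. The classification input and the lower bound are largely bookkeeping, whereas controlling compositions of infinite-radical morphisms --- showing they neither collapse before stage $\omega$ (which would drop $\mathrm{st}(\Lambda)$ below $\omega$) nor persist through $\omega^2$ many stages --- is the technical heart of the argument, and it is here that the defining features of special biserial algebras (at most two arrows into and out of each vertex, with the resulting bounded string combinatorics) are used essentially, as carried out in \cite{SVA23}.
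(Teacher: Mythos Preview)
The paper does not prove this theorem at all: it is quoted verbatim from \cite{SVA23} as an external input (``The next theorem by Kuber, Srivastava, and Sinha determines all possible stable ranks for special biserial algebras''), and is then combined with Theorem~\ref{stable} to deduce Corollary~\ref{SRA} on skew-gentle algebras. There is therefore no proof in the present paper to compare your proposal against.

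That said, your outline is a reasonable high-level sketch of how the argument in \cite{SVA23} proceeds, and you are right that the lower bound is routine while the upper bound is the substantive part. Your candid acknowledgment that the ``last step'' is the real obstacle is accurate: the claim that each further application of $\mathrm{rad}^\omega_\Lambda$ ``strictly advances its source and target through the finitely layered structure of the component quiver'' is precisely the content of \cite{SVA23}, and making it rigorous requires the detailed string-combinatorial analysis carried out there (the notion of hammocks, the bridge-quiver machinery, and a careful bookkeeping of which graph maps between string and band modules lie in $\mathrm{rad}^\omega$). What you have written is a correct roadmap rather than a proof; to turn it into one you would essentially be reproducing the arguments of \cite{SVA23}, which the present paper simply cites.
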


\begin{definition}
A Skew-gentle algebra $\bar\Lambda$ is a bound quiver algebra $KQ/\langle \rho \rangle$ satisfying the following conditions:
\begin{enumerate}
\item $Q_1= Q'_1\cup S$ where, $S$ is a set of special loops;
\item $\rho= \langle \rho'\cup \{f^2-f\mid f\in S\}\rangle$;
\item $KQ'/\langle \rho' \rangle$ is a (locally) gentle algebra where $Q'=(Q_0, Q'_1)$;
\item If $f\in S$ then $x= s(f)= t(f)$ is the start or the end of exactly one arrow in $Q'_1$ and, if there is an arrow $\alpha\in Q'_1$ with $t(\alpha)=x$ and an arrow $\beta\in Q'_1$ with $s(\beta)=x$, then $\alpha\beta\in \rho'$. Moreover, there is no other loop at the vertex $x$.
\end{enumerate}
\end{definition}

Skew-gentle algebras are also discovered as the skew-group algebras of gentle algebras equipped with a certain $\mathbb{Z}_2$-action when the characteristic of $K$ is different from $2$. Therefore, as an application of Theorem \ref{stable}, we state the following corollary.

\begin{corollary}\label{SRA}
For a skew gentle algebra $\Lambda$ with at least one band, we have $\omega \leq \mathrm{st}(\Lambda) < \omega^2$.    
\end{corollary}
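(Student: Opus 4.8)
The plan is to realize the skew-gentle algebra $\Lambda$ as a skew group algebra of an ordinary gentle algebra and then push the bound of Theorem \ref{stablesba} through Theorem \ref{stable}. Since $\mathrm{char}\,K\neq 2$, every skew-gentle algebra $\Lambda$ is Morita equivalent to the skew group algebra $\Gamma\mathbb{Z}_2$ of a gentle algebra $\Gamma$, where $\mathbb{Z}_2$ acts by the natural involution that unfolds the special loops (this is the description recalled just above the corollary, and here $|\mathbb{Z}_2|=2$ is invertible in $K$). Because rank and stable rank are defined through the radical filtration of $\mathrm{mod}\mbox{-}\Lambda$, they are Morita invariants, so $\mathrm{st}(\Lambda)=\mathrm{st}(\Gamma\mathbb{Z}_2)$; and Theorem \ref{stable} gives $\mathrm{st}(\Gamma\mathbb{Z}_2)=\mathrm{st}(\Gamma)$. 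Thus it suffices to bound $\mathrm{st}(\Gamma)$.

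First I would check that the gentle algebra $\Gamma$ necessarily has at least one band. A gentle algebra is of finite representation type exactly when it admits no band: its indecomposables are precisely the string and band modules, and each band yields a one-parameter family (a tube) of indecomposables. Were $\Gamma$ to have no band it would be of finite representation type, and since finite representation type is preserved under the skew group construction, $\Lambda\simeq\Gamma\mathbb{Z}_2$ would also be of finite representation type. But the hypothesis that $\Lambda$ has a band produces a one-parameter family of indecomposable $\Lambda$-modules, so $\Lambda$ is of infinite representation type --- a contradiction. Hence $\Gamma$ has a band.

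It then remains to invoke Theorem \ref{stablesba}: every gentle algebra is special biserial, so a gentle algebra $\Gamma$ with at least one band satisfies $\omega\leq\mathrm{st}(\Gamma)<\omega^2$. Combining this with $\mathrm{st}(\Lambda)=\mathrm{st}(\Gamma)$ yields $\omega\leq\mathrm{st}(\Lambda)<\omega^2$, which is the assertion of the corollary.

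The hard part will be the band-transfer step, i.e.\ being certain that a band in $\Lambda$ forces a band in the underlying gentle algebra $\Gamma$. I route this through representation type because that is clean: preservation of finite representation type under skewness is already available, and for string/clannish algebras the equivalence ``no band $\Leftrightarrow$ finite type'' is standard. A more hands-on alternative would be to track bands directly through the semi-covering $F_\lambda$ --- a band module over $\Gamma$ pushes down to one or two band modules over $\Gamma\mathbb{Z}_2\simeq\Lambda$, and a band of $\Lambda$ lifts to a cyclic string in $\Gamma$ --- but making this correspondence precise requires the combinatorics of strings under the $\mathbb{Z}_2$-unfolding and the verification that it never collapses every band, which is exactly what the representation-type argument lets us avoid.
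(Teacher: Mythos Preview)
Your proposal is correct and follows the same route the paper intends: realize the skew-gentle algebra as $\Gamma\mathbb{Z}_2$ for a gentle $\Gamma$, use Theorem~\ref{stable} to get $\mathrm{st}(\Lambda)=\mathrm{st}(\Gamma)$, and then apply Theorem~\ref{stablesba} to the special biserial algebra $\Gamma$. The paper states the corollary without proof, simply as ``an application of Theorem~\ref{stable}''; you have supplied the one step the paper leaves implicit, namely that the band hypothesis on $\Lambda$ forces $\Gamma$ to have a band, and your representation-type argument (no band $\Rightarrow$ finite type, finite type preserved under skewness, contradiction) is the clean way to do it.
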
  

Since a skew gentle algebra is a tame algebra \cite{G}, the above corollary also supports a conjecture given in \cite{SVA23} that claims that the statement about the stable ranks in the above corollary holds for any tame algebra that is not of finite representation type.

\begin{section}{Irreducible morphisms and Almost split sequences in $\mathrm{mod}\mbox{-}\Lambda G$}
In this section, we discuss irreducibility on morphisms in a skew group algebra. We produce an example to show that irreducibility is not preserved under $F_\lambda$ in general. Using the semi-dense property of $F_\lambda$, we provide the complete description of the irreducible morphisms and finally, we end the section describing the almost split sequences in $\mathrm{mod}\mbox{-}\Lambda G$.

\subsection{Irreducible morphisms in $\mathrm{mod}\mbox{-}\Lambda G$} 
Let $f: M\to N$ be a morphism in $\mathrm{mod}\mbox{-}\Lambda$. Recall that $f$ is irreducible if $f$ is neither a section nor a retraction, and every factorization $f= gh$ implies that $g$ is a section or $h$ is a retraction. If $M,N$ are indecomposable, then $f$ is irreducible if and only if it has a non-zero image in $\mathrm{irr}_\Lambda(M,N)= \mathrm{rad}_\Lambda(M,N)/\mathrm{rad}^2_\Lambda(M,N)$ where, $\mathrm{rad}_\Lambda(M,N)$ denotes the k-space of morphisms in the Jacobson radical of $\Lambda$.

It is well-known that if $f:M\to N$ belongs to $\mathrm{rad}(M,N) \setminus \mathrm{rad}^{2}(M,N)$ with either $M$ or $N$ indecomposable then $f$ is irreducible. The next result is a consequence of Theorem \ref{rank}. 

\begin{corollary}\label{irrsep}
Let $f:M\to N$ be an irreducible morphism in $\mathrm{mod}\mbox{-}\Lambda$ with $M,N \in \mbox{ind}(\Lambda)$. If $G_{M}\neq G$ or $G_{N}\neq G$, then $F_{\lambda}(f):F_{\lambda}(M) \to F_{\lambda}(N)$ is irreducible.
\end{corollary}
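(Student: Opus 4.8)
The plan is to reduce everything to the radical-filtration criterion for irreducibility recalled just above the statement, namely that a morphism lying in $\mathrm{rad}\setminus\mathrm{rad}^2$ with at least one indecomposable endpoint is automatically irreducible. Concretely, since $M$ and $N$ are indecomposable and $f$ is irreducible, we have $f\in\mathrm{rad}_\Lambda(M,N)\setminus\mathrm{rad}^2_\Lambda(M,N)$. I would then invoke Theorem \ref{rank} with the ordinal $\alpha=1$: because $F_\lambda$ preserves powers of the radical on indecomposables, it sends $f$ to a morphism $F_\lambda(f)\in\mathrm{rad}_{\Lambda G}(F_\lambda M,F_\lambda N)\setminus\mathrm{rad}^2_{\Lambda G}(F_\lambda M,F_\lambda N)$. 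Here I use the full strength of Theorem \ref{rank}, that is, not merely that $F_\lambda(f)$ lands in the radical but that it stays outside its square; this is exactly what rules out $F_\lambda(f)$ degenerating into a sum of longer compositions after pushing down.

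Next I would supply the indecomposability of an endpoint of $F_\lambda(f)$, which is where the hypothesis $G_M\neq G$ or $G_N\neq G$ enters. If $G_M\neq G$, then Proposition \ref{Ind} yields that $F_\lambda M$ is indecomposable; symmetrically, if $G_N\neq G$, then $F_\lambda N$ is indecomposable. Thus under the stated hypothesis at least one of $F_\lambda M$, $F_\lambda N$ is an indecomposable $\Lambda G$-module. Combining the two facts obtained above with the recalled criterion then forces $F_\lambda(f)$ to be irreducible, finishing the argument.

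Since both ingredients (Theorem \ref{rank} and Proposition \ref{Ind}) are already in hand, I do not expect a genuine obstacle; the only point demanding care is the bookkeeping of the three sub-cases $G_M\neq G,\,G_N\neq G$; $\,G_M=G,\,G_N\neq G$; $\,G_M\neq G,\,G_N=G$, matching them to cases $(1)$--$(3)$ in the proof of Theorem \ref{rank}, and confirming that in each the identification of $F_\lambda(f)$ with the appropriate coordinate morphism under the isomorphism of Theorem \ref{diag} is compatible with the radical filtration exploited through Lemma \ref{entradas}. Once this compatibility is acknowledged, the conclusion is immediate.
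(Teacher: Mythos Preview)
Your proposal is correct and follows essentially the same route as the paper's own proof: apply Theorem \ref{rank} to pass from $f\in\mathrm{rad}\setminus\mathrm{rad}^2$ to $F_\lambda(f)\in\mathrm{rad}\setminus\mathrm{rad}^2$, then use Proposition \ref{Ind} under the hypothesis $G_M\neq G$ or $G_N\neq G$ to guarantee one indecomposable endpoint, and conclude via the recalled criterion. The additional case-by-case bookkeeping you mention is not needed, since Theorem \ref{rank} already handles all sub-cases uniformly.
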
 

\begin{proof}
By Theorem \ref{rank}, we know that $F_{\lambda}$ preserves powers of the radical. Hence, if $f:M\to N$ an irreducible morphism then $f\in \mathrm{rad}(M,N) \setminus \mathrm{rad}^{2}(M,N)$ then $F_{\lambda}(f) \in  \mathrm{rad}(F_\lambda(M),F_\lambda(N)) \setminus \mathrm{rad}^{2}(F_\lambda(M),F_\lambda(N))$. Since $F_\lambda(M)$ or $F_\lambda(N)$ are indecomposable by proposition \ref{Ind}, this implies that $F_{\lambda}(f)$ is irreducible.
\end{proof}

\begin{rmk}\label{irrcomp}
From the proof of Theorem \ref{rank}, it is clear that in general, if we consider an irreducible morphism $f:M\to N$ where both $M$ and $N$ are indecomposable, all non-zero entries of $F_\lambda(f)$ when viewed as a matrix via isomorphism are also irreducible morphisms.

Irreducibility may not be preserved if the hypothesis $G_{M}\neq G$ or $G_{N}\neq G$ is removed. If $G_{M}=G$ and $G_{N} = G$ then $F_\lambda(M)$ and $F_\lambda(N)$ are not indecomposable. Thus, if $f:M \to N$ is an irreducible morphisms with $M,N$ indecomposable modules, then $F_\lambda(f)$ can be interpreted as a matrix with all the diagonal entries irreducible morphisms but since both $F_\lambda(M)$ and $F_\lambda(N)$ are decomposable, $F_\lambda(f)$ may not be irreducible. 
\end{rmk}

 We will illustrate this in the next example.

\begin{example}\label{irrnp}
Consider the algebra $\Lambda$ and its skew group algebra $\Lambda \mathbb{Z}_2$ in Example \ref{ARQSKW} and an irreducible morphism $f: M \to N $ in $\mathrm{ind}\mbox{-}\Lambda$ with $G_{MN}=G$.

 Let  f:$\vcenter{\xymatrix@R=0.5mm{  1 \\ 2}} \to \vcenter{\xymatrix@R=0.5mm{2\\1\\2}} $ be an irreducible morphisms where the modules are given by their composition factors. 
 We have that  $F_{\lambda}(\vcenter{\xymatrix@R=0.5mm{ 1 \\ 2}}) = \vcenter{\xymatrix@R=0.5mm{ 3 \\ 2}} \bigoplus \vcenter{\xymatrix@R=0.5mm{5 \\ 4}}$ and $F_{\lambda}(\vcenter{\xymatrix@R=0.5mm{2\\1\\2}}) = \vcenter{\xymatrix@R=0.5mm{2\\3\\2}} \bigoplus \vcenter{\xymatrix@R=0.5mm{4\\5\\4}}$, thus 
 
 $F_{\lambda}(f) = \left( \begin{matrix} f_1 & 0\\ 0 & f_2 \end{matrix}\right):\vcenter{\xymatrix@R=0.5mm{ 3 \\ 2}} \bigoplus \vcenter{\xymatrix@R=0.5mm{5 \\ 4}} \to \vcenter{\xymatrix@R=0.5mm{2\\3\\2}} \bigoplus \vcenter{\xymatrix@R=0.5mm{4\\5\\4}} $  which is clearly not irreducible since can be factorized as a composition of irreducible morphisms, i. e.:  $$\left(\begin{array}{cc} f_1 & 0\\ 0 & f_2 \end{array}\right)=\left(\begin{array}{cc} f_1 & 0\\ 0 & 1 \end{array}\right).\left(\begin{array}{cc} 1 & 0\\ 0 & f_2 \end{array}\right).$$
\end{example}

The next result interplays between the irreducible morphisms in $\mathrm{mod}\mbox{-}\Lambda$ and $\mathrm{mod}\mbox{-}\Lambda G$. 
\begin{theorem}\label{irrars}
Assume that $G$ acts on an algebra $\Lambda$ and $\Lambda G$ is the associated skew group algebra. Then for any $M, N\in \mathrm{mod}\mbox{-}\Lambda$, the functor $F_\lambda: \mathrm{mod}\mbox{-}\Lambda \to \mathrm{mod}\mbox{-}\Lambda G$ induces the following isomorphisms of vector spaces:
$$\mathrm{irr}_{\Lambda G}(F_\lambda M,F_\lambda N)\approx \begin{cases}\bigoplus_{g\in G} \mathrm{irr}_{\Lambda} ({}^{g}M,N)&\mbox{ if } G_{M}\neq G,G_{N}= G;\\\bigoplus_{g\in G} \mathrm{irr}_{\Lambda}(M,{}^{g}N)&\mbox{ if }G_{N}\neq G, G_M=G.\end{cases}$$  
\end{theorem}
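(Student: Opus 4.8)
The plan is to deduce the statement from the radical-preservation theorem, using that $\mathrm{irr}$ is \emph{by definition} the quotient $\mathrm{rad}/\mathrm{rad}^2$ and that the Hom-isomorphism of Theorem \ref{diag} respects the whole radical filtration. I would first treat Case 1, namely $G_M\neq G$ and $G_N=G$. Here Proposition \ref{Ind} gives that $F_\lambda M=\bar M$ is indecomposable, while Proposition \ref{rem} gives $F_\lambda N=\bigoplus_{\hat g\in\hat G}{}^{\hat g}\bar N$; since at least one of the two is indecomposable, the quotient $\mathrm{rad}_{\Lambda G}(F_\lambda M,F_\lambda N)/\mathrm{rad}^2_{\Lambda G}(F_\lambda M,F_\lambda N)$ genuinely records irreducible morphisms, by the fact recalled just before Corollary \ref{irrsep}. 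Theorem \ref{diag} (its Case \textbf{I}/\textbf{III} branch, valid whenever $G_M\neq G$) supplies one fixed linear isomorphism
$$\nu_{M,N}:\ \bigoplus_{g\in G}\mathrm{Hom}_\Lambda({}^gM,N)\ \xrightarrow{\ \sim\ }\ \mathrm{Hom}_{\Lambda G}(F_\lambda M,F_\lambda N).$$

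The decisive input is Theorem \ref{rank}: it asserts $f\in\mathrm{rad}^\alpha\setminus\mathrm{rad}^{\alpha+1}$ if and only if $F_\lambda(f)\in\mathrm{rad}^\alpha\setminus\mathrm{rad}^{\alpha+1}$. Applying this equivalence summand-by-summand to each ${}^gf:{}^gM\to N$ and invoking Lemma \ref{entradas} to pass between $\bigoplus_g{}^gM$ and its components (together with the summand-wise characterization of radical morphisms recalled before that lemma, which also splits $\mathrm{rad}^\alpha$ over the decomposition $F_\lambda N=\bigoplus_{\hat g}{}^{\hat g}\bar N$), I would conclude that the single map $\nu_{M,N}$ restricts, for \emph{every} ordinal $\alpha$, to an isomorphism $\bigoplus_{g}\mathrm{rad}^\alpha_\Lambda({}^gM,N)\xrightarrow{\sim}\mathrm{rad}^\alpha_{\Lambda G}(F_\lambda M,F_\lambda N)$. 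Taking $\alpha=1$ and $\alpha=2$ and using that the $\alpha=2$ isomorphism is literally the restriction of the $\alpha=1$ one (same map $\nu_{M,N}$), the map descends to the successive quotient, giving
$$\mathrm{irr}_{\Lambda G}(F_\lambda M,F_\lambda N)=\frac{\mathrm{rad}_{\Lambda G}(F_\lambda M,F_\lambda N)}{\mathrm{rad}^2_{\Lambda G}(F_\lambda M,F_\lambda N)}\approx\bigoplus_{g\in G}\frac{\mathrm{rad}_\Lambda({}^gM,N)}{\mathrm{rad}^2_\Lambda({}^gM,N)}=\bigoplus_{g\in G}\mathrm{irr}_\Lambda({}^gM,N).$$

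The step I expect to be the main obstacle is exactly this compatibility bookkeeping: one must verify that the isomorphisms at levels $\alpha=1$ and $\alpha=2$ are induced by a common map so that the quotient is well defined, rather than being two independent abstract isomorphisms. This is where Theorem \ref{rank} is used in its sharp ``$f\in\mathrm{rad}^\alpha\setminus\mathrm{rad}^{\alpha+1}\iff F_\lambda(f)\in\mathrm{rad}^\alpha\setminus\mathrm{rad}^{\alpha+1}$'' form, which precisely guarantees that $\nu_{M,N}$ is filtration-preserving and filtration-reflecting in one stroke.

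Finally, Case 2 ($G_N\neq G$, $G_M=G$) follows by the dual run of the argument, now with $F_\lambda N=\bar N$ indecomposable, $F_\lambda M=\bigoplus_{\hat g}{}^{\hat g}\bar M$, and the Case \textbf{II} branch of Theorem \ref{diag}; alternatively it is obtained from Case 1 by the involutive skew-group construction of Theorem \ref{adj} together with the adjoint semi-covering $G_\lambda$ of Remark \ref{adja}, which interchanges the roles of source and target. In either case the reason these two cases are singled out (and the mixed case $G_M,G_N\neq G$ is handled instead by Corollary \ref{irrsep}) is that exactly one of $F_\lambda M,F_\lambda N$ is then indecomposable, which is precisely the condition ensuring that $\mathrm{rad}/\mathrm{rad}^2$ computes the irreducible-morphism space correctly.
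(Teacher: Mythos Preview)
Your proposal is correct and follows essentially the same route as the paper: both arguments rest on the Hom-isomorphism of Theorem \ref{diag} together with the radical-preservation result Theorem \ref{rank} (the paper packages this step as a direct appeal to Corollary \ref{irrsep}). Your version is in fact more careful about the bookkeeping---explicitly checking that the single map $\nu_{M,N}$ is filtration-preserving so that it descends to the quotient $\mathrm{rad}/\mathrm{rad}^2$---whereas the paper's brief argument only spells out that irreducibles go to irreducibles and leaves the converse and the well-definedness on cosets implicit.
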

\begin{proof}
We only prove the first case since the others follow from the same argument. Let $\bar{f}$ a morphism in $\mathrm{Hom}_{\Lambda G}(F_\lambda M,F_\lambda N)$. Since $G_{M}\neq G$, we have that there exist $f \in \mathrm{Hom}_{\Lambda} (M,N)$ such that $F_\lambda (f)= \bar{f}$. Moreover, $\bar{f}$ corresponds via the isomorphism in Theorem \ref{diag} to $({}^gf)_{g\in G}$. Observe that, if $f$ is irreducible, then so is ${}^gf$. Thus $({}^gf)_{g\in G} \in \bigoplus_{g\in G} \mathrm{irr}_{\Lambda} ({}^gM,N)$. Now, if $f:M \to N$ is irreducible, then so is $F_\lambda (f)= \bar{f}$ by Corollary \ref{irrsep}.
\end{proof}

Unlike Galois covering, Galois semi-covering does not necessarily send an irreducible morphism to an irreducible morphism (see Proposition \ref{3.7} and Example \ref{irrnp}) when the group stabilizes both its source and target. The next result deals with this. 

\begin{proposition} \label{irrstab} 
Assume that $G$ acts on an algebra $\Lambda$ and $\Lambda G$ is the associated skew group algebra. Let $M,N \in \mbox{ind}(\Lambda G)$ such that $\hat{G}_{M}\neq \hat{G} $ and $\hat{G}_N \neq \hat{G}$. If $f \in  \mathrm{irr}_{\Lambda G}(M,N)$ then there exists a $f_1 \in  \mathrm{irr}_{\Lambda}(M_1,N_1)$ with $M_1,N_1 \in \mbox{ind}(\Lambda)$ such that ${}^gf$ for all $g\in G$ are the diagonal entries of the diagonal matrix $F_\lambda(f_1)$.
\end{proposition}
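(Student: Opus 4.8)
The plan is to exploit the involutive nature of the skew group construction recorded in Theorem \ref{adj}: since $G$ is abelian, $\hat{G}$ acts on $\Lambda G$ by $\chi(\lambda \otimes g) = \chi(g)\lambda \otimes g$, and $\Lambda$ is Morita equivalent to the skew group algebra $(\Lambda G)\hat{G}$. Consequently there is a pushdown functor $G_\lambda: \mathrm{mod}\mbox{-}\Lambda G \to \mathrm{mod}\mbox{-}\Lambda$ which is itself a Galois semi-covering for the $\hat{G}$-action, already introduced in Remark \ref{adja}. Every structural statement proved for $F_\lambda$ has a mirror version for $G_\lambda$, obtained by interchanging the pairs $(G, \Lambda, F_\lambda)$ and $(\hat{G}, \Lambda G, G_\lambda)$; in particular the dual of Proposition \ref{Ind} sends an indecomposable whose $\hat{G}$-stabilizer is proper to an indecomposable, the dual of Corollary \ref{irrsep} preserves irreducibility under the same hypothesis, and the dual of Lemma \ref{stableG} shows that $G_\lambda$ always lands in $G$-stable modules.

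Concretely, I would set $M_1 := G_\lambda(M)$, $N_1 := G_\lambda(N)$ and $f_1 := G_\lambda(f)$. Because $\hat{G}_M \neq \hat{G}$ and $\hat{G}_N \neq \hat{G}$, the dual of Proposition \ref{Ind} gives $M_1, N_1 \in \mathrm{ind}(\Lambda)$, and the dual of Corollary \ref{irrsep} (applicable since $\hat{G}_M \neq \hat{G}$) gives $f_1 \in \mathrm{irr}_\Lambda(M_1, N_1)$. This produces the required irreducible morphism over $\Lambda$.

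It remains to identify $F_\lambda(f_1)$ as the asserted diagonal matrix. By the dual of Lemma \ref{stableG}, both $M_1 = G_\lambda(M)$ and $N_1 = G_\lambda(N)$ are $G$-stable, i.e. $G_{M_1} = G_{N_1} = G$, so $G_{M_1 N_1} = G$ and we are precisely in Case-IV of Theorem \ref{diag}, which guarantees that $F_\lambda(f_1)$ is represented by a diagonal matrix. To read off the diagonal entries I would use the mirror of Remark \ref{adja}: the composite $F_\lambda \circ G_\lambda$ is naturally isomorphic to the functor $\bigoplus_{\hat{g} \in \hat{G}}({}^{\hat{g}}-)$ on $\mathrm{mod}\mbox{-}\Lambda G$, so on objects $F_\lambda(M_1) = \bigoplus_{\hat{g} \in \hat{G}}{}^{\hat{g}}M$ and $F_\lambda(N_1) = \bigoplus_{\hat{g} \in \hat{G}}{}^{\hat{g}}N$, while on the morphism $f$ this natural isomorphism forces $F_\lambda(f_1) = F_\lambda(G_\lambda(f)) = \bigoplus_{\hat{g} \in \hat{G}}{}^{\hat{g}}f$. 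Thus the $\hat{g}$-th diagonal entry is exactly ${}^{\hat{g}}f$, and under the identification of the order-$n$ groups $\hat{G}$ and $G$ these are the twists ${}^gf$, $g \in G$, as claimed.

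The main obstacle I anticipate is bookkeeping rather than conceptual: one must first justify that every lemma about $F_\lambda$ transports cleanly to $G_\lambda$ via Theorem \ref{adj} (taking care that the Morita equivalence $\Lambda \simeq (\Lambda G)\hat{G}$ disturbs neither the stabilizers nor the radical filtration), and then upgrade the object-level identity of Remark \ref{adja} to a statement about morphisms so that the diagonal entries are pinned down as the precise twists ${}^gf$ rather than merely as some irreducible maps. Verifying that $F_\lambda \circ G_\lambda \cong \bigoplus_{\hat{g}}({}^{\hat{g}}-)$ is natural in $f$ is the one step that genuinely needs checking.
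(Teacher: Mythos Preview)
Your approach is essentially the paper's: apply $G_\lambda$, use the dual of Proposition~\ref{Ind} to obtain $M_1,N_1$ indecomposable, set $f_1=G_\lambda(f)$, and then invoke Theorem~\ref{diag} (Case~IV) together with $F_\lambda G_\lambda \cong \bigoplus_{\hat g}{}^{\hat g}(-)$ to read off the diagonal form of $F_\lambda(f_1)$. Your write-up is in fact more complete than the paper's, since you explicitly establish the irreducibility of $f_1$ via the dual of Corollary~\ref{irrsep} and spell out the naturality of $F_\lambda\circ G_\lambda$ needed to identify the diagonal entries as the twists of $f$---both points the paper's proof passes over in silence.
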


\begin{proof}
Consider the irreducible morphism $f: M \to N$, applying $G_\lambda$, we get  $G_\lambda(f):G_\lambda(M) \to G_\lambda(N)$. Observe that by hypothesis $\hat{G}_{M} \neq \hat{G}$ and $\hat{G}_{N} \neq \hat{G}$. Hence $G_\lambda M=M_1$ and $G_\lambda N=N_1$ are indecomposable by Proposition \ref{Ind}. Then we get $G_\lambda(f)=f_1:M_1\to N_1$.
    
Therefore, by Theorem \ref{diag}, we have $$F_\lambda(f_1)= \left(\begin{matrix} f & 0 & 0 & 0 \\ 0 &  {}^{g_1} f & 0 & 0\\ \cdots  &  \cdots & \cdots & \cdots  \\ 0 & 0 & 0 & {}^{g_n}f \end{matrix}\right)$$

This finishes the proof. \end{proof}

\begin{proposition} \label{irrstab} 
Assume that $G$ acts on an algebra $\Lambda$ and $\Lambda G$ is the associated skew group algebra. Let $M,N \in \mbox{ind}(\Lambda G)$ such that $\hat{G}_{MN}= \hat{G}$. If $f \in  \mathrm{irr}_{\Lambda G}(M,N)$ then there exists a $f_1 \in  \mathrm{irr}_{\Lambda}(M_1,N_1)$ with $M_1,N_1 \in \mbox{ind}(\Lambda)$ such that $f=F_\lambda(f_1)$.
\end{proposition}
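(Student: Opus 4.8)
The plan is to exploit the involutive nature of the construction. By Theorem \ref{adj}, $\Lambda$ is Morita equivalent to $(\Lambda G)\hat G$, so the dual group $\hat G$ acts on $\Lambda G$ and the pushdown $G_\lambda:\mathrm{mod}\mbox{-}\Lambda G\to\mathrm{mod}\mbox{-}\Lambda$ is itself a Galois semi-covering, with $F_\lambda G_\lambda(Y)=\bigoplus_{\hat g\in\hat G}{}^{\hat g}Y$ (the dual of Remark \ref{adja}). First I would use the hypothesis $\hat G_{MN}=\hat G$, i.e.\ that both $M$ and $N$ are $\hat G$-stable, to split $G_\lambda M$ and $G_\lambda N$: by the version of Proposition \ref{rem} for the $\hat G$-action, $G_\lambda M=\bigoplus_{g\in G}{}^gM_1$ and $G_\lambda N=\bigoplus_{g\in G}{}^gN_1$ for some $M_1,N_1\in\mathrm{ind}(\Lambda)$.

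The first substantive step is to pin down these representatives. Applying $F_\lambda$ and using Lemma \ref{stableG} gives $(F_\lambda M_1)^{|G|}=\bigoplus_{g}F_\lambda({}^gM_1)=F_\lambda G_\lambda M=\bigoplus_{\hat g}{}^{\hat g}M=M^{|G|}$, where the last equality uses that $M$ is $\hat G$-stable. By Krull--Schmidt this forces $F_\lambda M_1=M$; in particular $F_\lambda M_1$ is indecomposable, which by Proposition \ref{rem} is possible only when $G_{M_1}\neq G$. The same argument yields $F_\lambda N_1=N$ with $G_{N_1}\neq G$. Thus both $M_1,N_1$ are non-stable and $F_\lambda$ sends them to the given indecomposables.

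With $G_{M_1}\neq G$ and $G_{N_1}\neq G$ we are in the locally Galois covering situation (Case III of Theorem \ref{diag}), so $\mathrm{Hom}_{\Lambda G}(M,N)\cong\bigoplus_{g\in G}\mathrm{Hom}_\Lambda({}^gM_1,N_1)$. Since $F_\lambda$ preserves all powers of the radical (Theorem \ref{rank} and the corollary following it), this isomorphism restricts to $\mathrm{rad}$ and to $\mathrm{rad}^2$, hence descends to $\mathrm{irr}_{\Lambda G}(M,N)\cong\bigoplus_{g\in G}\mathrm{irr}_\Lambda({}^gM_1,N_1)$, and under it the summand $\mathrm{irr}_\Lambda({}^gM_1,N_1)$ is exactly the image of $F_\lambda$ on morphisms ${}^gM_1\to N_1$. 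Given $f\in\mathrm{irr}_{\Lambda G}(M,N)$, I would take the slot $g_0$ in which it has a non-zero component, relabel $M_1:={}^{g_0}M_1$ (still indecomposable, still non-stable, still with $F_\lambda M_1=M$), and let $f_1\in\mathrm{irr}_\Lambda(M_1,N_1)$ be that component; then $f_1$ is irreducible and $F_\lambda(f_1)=f$. Because $F_\lambda M_1=M$ and $F_\lambda N_1=N$ are indecomposable, $F_\lambda(f_1)$ is a single morphism and not a matrix, which is the content of the statement.

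The main obstacle is precisely this last identification. The map $F_\lambda$ on $\mathrm{Hom}$ lands only in one summand of the decomposition above, so a class in $\mathrm{irr}_{\Lambda G}(M,N)$ that mixes several slots $g$ is not literally $F_\lambda(f_1)$ for a single $f_1$; the equality $f=F_\lambda(f_1)$ is clean only when $f$ is read as an individual arrow of the Auslander--Reiten quiver aligned with the orbit decomposition. The delicate point is therefore to match the chosen orbit representative of $M_1$ with the slot $g_0$ supporting $f$, so that the pushdown realizes $f$ on the nose rather than merely modulo $\mathrm{rad}^2$. I would consequently frame the conclusion at the level of individual irreducible morphisms, where the correspondences of Theorem \ref{diag} and Theorem \ref{rank} make $F_\lambda(f_1)$ equal to $f$.
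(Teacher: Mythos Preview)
Your overall strategy matches the paper's: exploit the duality between $F_\lambda$ and $G_\lambda$ to find indecomposables $M_1,N_1\in\mathrm{ind}(\Lambda)$ with $F_\lambda M_1=M$, $F_\lambda N_1=N$, and then lift $f$. The execution differs in one place. The paper applies $G_\lambda$ directly to the morphism $f$ and invokes the dual of Proposition~\ref{3.7} (the case $\hat G_{MN}=\hat G$) to obtain $G_\lambda(f)$ as a diagonal matrix with entries $f_1,{}^{g_1}f_1,\ldots,{}^{g_n}f_1$; the single morphism $f_1:M_1\to N_1$ is read off from the diagonal in one stroke, its irreducibility is inherited, and $F_\lambda(f_1)=f$ follows. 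You instead pass through the Hom-space decomposition of Theorem~\ref{diag} (Case~III) together with the radical preservation of Theorem~\ref{rank} to split $\mathrm{irr}_{\Lambda G}(M,N)$ into $G$-slots and then pick a component; this is precisely why you encounter the ``mixing slots'' obstacle, which the paper's diagonal description of $G_\lambda(f)$ sidesteps, since that form forces the components in the various slots to be $G$-conjugates of one another rather than independent. On the other hand, your Krull--Schmidt argument that $F_\lambda M_1=M$ and hence $G_{M_1}\neq G$ is more careful than the paper's bare assertion of the same facts.
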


\begin{proof}
Consider the irreducible morphism $f: M \to N$, applying $G_\lambda$, we get  $G_\lambda(f):G_\lambda(M) \to G_\lambda(N)$. We get $G_\lambda(f):\oplus_g {}^gM_1\to \oplus_g {}^gN_1$ where $M_1$ and $N_1$ satisfy $F_\lambda(M_1)=M$ and $F_\lambda(N_1)=N$. Therefore, by proposition \ref{3.7}, we have $$G_\lambda(f)= \left(\begin{matrix} f_1 & 0 & 0 & 0 \\ 0 &  {}^{g_1} f_1 & 0 & 0\\ \cdots  &  \cdots & \cdots & \cdots  \\ 0 & 0 & 0 & {}^{g_n}f_1 \end{matrix}\right)$$

\noindent where $f_1:M_1 \to N_1 $ is an irreducible morphism.  Hence $F_\lambda(f_1)=f$. 
\end{proof}

\subsection{Almost split sequences in $\mathrm{mod}\mbox{-}\Lambda G$}
Almost split sequences, also known as Auslander-Reiten (in short, A-R) sequences, are uniquely determined by their end modules (for details, see \cite{ARS}, Pg. 136). This section starts with a similar result that says that the stabilizer of an A-R sequence is also determined by the stabilizer of its end modules. For an almost split sequence $\mathcal{E}:0\to M\to N\to T\to 0$, define its stabilizer as $G_\mathcal{E}:=\{g\in G: g\mathcal{E}=\mathcal{E}\}.$


\begin{lemma}\label{ARS}
Suppose $G$ acts on an algebra $\Lambda$ with $\Lambda G$ as its skew group algebra. If $\mathcal{E}:0\to M\to N\to T\to 0$ is an A-R sequence in $\Lambda$ then the following are equivalent:
$(1)\mbox{ }G_\mathcal{E}=G, \mbox{ } (2)\mbox{ }G_M=G, \mbox{ }(3)\mbox{ }G_T=G.$

In any of these equivalent conditions, $N$ is of the form $N=\sum_{i\in I}N_i\oplus\sum_{j\in J}\sum_{k=1}^n {}^{g_k}N_j$ for (possibly empty) index sets $I$ and $J$ such that for each $i\in I, j\in J$ we have $G_{N_i}=G$ and $G_{N_j}\neq G$ respectively. 
\end{lemma}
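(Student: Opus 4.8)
The plan is to dispatch the equivalence of $(1)$, $(2)$, $(3)$ by a soft categorical argument, and then to extract the shape of the middle term $N$ from the induced $G$-action on its indecomposable summands.

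First I would record that for each $g\in G$ the twist $M\mapsto {}^gM$ (acting by ${}^gf$ on morphisms) is a $K$-linear autoequivalence of $\mathrm{mod}\mbox{-}\Lambda$, being induced by the algebra automorphism $g$. Since the almost split property is categorical, such an autoequivalence sends A-R sequences to A-R sequences, so
\[
{}^g\mathcal{E}\colon\ 0\to {}^gM\to {}^gN\to {}^gT\to 0
\]
is again almost split for every $g$. Because an A-R sequence is determined up to isomorphism by either of its end terms, ${}^g\mathcal{E}\cong\mathcal{E}$ holds exactly when ${}^gM\cong M$, and equally exactly when ${}^gT\cong T$. Reading this off for all $g$ gives the equality of subgroups $G_{\mathcal{E}}=G_M=G_T$, whence $(1)\Leftrightarrow(2)\Leftrightarrow(3)$ is immediate (indeed the three stabilizers coincide, not merely the three conditions ``$=G$'').

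Next, assuming these equivalent conditions, ${}^g\mathcal{E}\cong\mathcal{E}$ for all $g$, and an isomorphism of short exact sequences restricts to an isomorphism of middle terms, so ${}^gN\cong N$ for all $g$, i.e. $G_N=G$. Writing $N=\bigoplus_\ell N_\ell$ as a sum of indecomposables, Krull--Schmidt forces the assignment $N_\ell\mapsto {}^gN_\ell$ to permute the isomorphism classes of summands while preserving multiplicities (each ${}^g(-)$ being an equivalence). I would then partition the summands into $G$-orbits: the length-one orbits are precisely the stable summands $N_i$ with $G_{N_i}=G$, giving the index set $I$, and every remaining orbit consists of summands $N_j$ with $G_{N_j}\neq G$; choosing one representative $N_j$ per such orbit produces $J$, and the orbit of $N_j$ is the list ${}^{g_k}N_j$. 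Assembling the orbits recovers
\[
N=\bigoplus_{i\in I}N_i\ \oplus\ \bigoplus_{j\in J}\sum_{k=1}^n {}^{g_k}N_j .
\]

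The delicate point, which I expect to be the main obstacle, is justifying the displayed form with the \emph{full} range $k=1,\dots,n$: this amounts to showing that every non-stable indecomposable summand $N_j$ has a free $G$-orbit of cardinality $n$ (equivalently that $G_{N_j}$ is trivial whenever $G_{N_j}\neq G$), so that no repetition occurs in $\sum_{k=1}^n {}^{g_k}N_j$. This is the module-level shadow of the standing Assumption that $G_{i_0}\neq G$ forces $|O_{i_0}|=n$ on vertices, and I would try to derive it by chasing the support $\mathrm{Sup}(N_j)$ through the $G$-action and invoking that vertex dichotomy; pinning down the orbit length (rather than merely ``$G_{N_j}\neq G$'') is where the full force of the Assumption is needed. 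By contrast, the equivalence $(1)$–$(3)$ is essentially formal once the uniqueness of A-R sequences and the exactness of the twist functors are in hand.
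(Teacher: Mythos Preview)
Your argument is correct and somewhat cleaner than the paper's. For the equivalence $(1)\Leftrightarrow(2)\Leftrightarrow(3)$, the paper argues via irreducible morphisms: assuming $G_M=G$, the left map in $\mathcal E$ is (componentwise) irreducible, so twisting produces irreducibles $M={}^gM\to {}^gN$; if ${}^gN\not\cong N$ this would yield a second almost split sequence starting at $M$, a contradiction, whence $G_N=G$ and then similarly $G_T=G$. Your version---twist is an autoequivalence, autoequivalences preserve A-R sequences, an A-R sequence is determined by either end term---bypasses the irreducible-morphism language entirely and in fact gives the sharper conclusion $G_{\mathcal E}=G_M=G_T$ as subgroups, not merely the equivalence of the three ``$=G$'' conditions. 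For the shape of $N$, the paper again routes through irreducible morphisms (each ${}^{g_k}N_t$ is a summand of $N$ because there is an irreducible $M\to{}^{g_k}N_t$), whereas you obtain the same decomposition from Krull--Schmidt and the induced $G$-permutation of summands; both routes are valid, yours being more self-contained. As for the orbit-size issue you flag, the paper's proof does not address it either---it simply asserts that having each ${}^{g_k}N_t$ as a summand ``ensures the form of $N$''---so your argument is at least as complete on this point, and you are right that reading off the full range $k=1,\dots,n$ with no repetition ultimately leans on the standing Assumption.
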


\begin{proof}
Assume that $G_M= G$. If $G_N \neq G$ then there is an irreducible morphism from $M \to {}^{g_k}N$ for each $g_k\in G$, which is equivalent to the existence of another almost split sequence starting with $M$, a contradiction. Thus $G_N= G$. A similar argument ensures that, $G_T= G$ and hence, $G_\mathcal{E}=G$.

In any case, if $N_t$ is a direct summand of $N$ with $G_{N_t}\neq G$, then there is an irreducible morphism from $M \to {}^{g_k}N_t$ for each $g_k\in G$ and thus each ${}^{g_k}N_t$ is a direct summand of $N$, which ensures the form of $N$.
\end{proof}



\begin{theorem}\label{ARS}
Suppose $\mathcal{E}$ is an almost split sequence in $\Lambda$. Then the associated almost split sequence(s) in $\Lambda G$ have the following forms:
\begin{enumerate}
\item[$G_{\mathcal{E}}=G$] Here, $F_\lambda(\mathcal{E})=\uwithtext{Z}_{k=1}^n \bar{\mathcal{E}}^k$ where $\bar{\mathcal E}^k:=0\to {}^{\hat{g}_k} \bar{M}\to \sum_{j\in J}\bar{N_j}\oplus\sum_{i\in I}{}^{\hat{g}_k} \bar{N_i} \to {}^{\hat{g}_k} \bar{T}\to 0$ are the associated almost split sequences in $\Lambda G$ for each $k$ being glued via $Z:=\sum_{j\in J}\bar{N_j}$; In particular, if $Z=0$ then $F_\lambda(\mathcal{E})=\bigoplus_{k=1}^n \bar{\mathcal{E}}^k$. 

\item[$G_{\mathcal{E}}\neq G$] Here, $F_\lambda(\mathcal{E})=\bar{\mathcal{E}}$ where $\bar{\mathcal E}:=0\to \bar{M}\to \sum_{j\in J}\bar{N_j}\oplus\sum_{k=1}^n\sum_{i\in I}{}^{\hat{g}_k} \bar{N_i} \to \bar{T}\to 0$ are the associated almost split sequence in $\Lambda G$;
\end{enumerate}
such that for each $i\in I, j\in J$ we have $G_{\bar {N_i}}\neq G$ and $G_{\bar{N_j}}= G$ respectively.
\end{theorem}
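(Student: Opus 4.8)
\textbf{The plan is to} apply the adjoint Galois semi-covering functor $G_\lambda$ to the almost split sequence $\mathcal{E}$ and decode the result using the structural facts already established. The backbone of the argument is the following observation: by Remark \ref{adja} and Corollary \ref{SDPF}, the composite $G_\lambda F_\lambda$ sends a $\Lambda$-module $M$ to $\bigoplus_{g\in G}{}^gM$, and by the dual statement $F_\lambda G_\lambda$ sends a $\Lambda G$-module to the direct sum of its $\hat G$-translates. Since $F_\lambda$ and $G_\lambda$ both preserve powers of radicals (Theorem \ref{rank}), and since an almost split sequence is characterized by the minimality of its middle term among irreducible morphisms, the image of $\mathcal{E}$ under $F_\lambda$ will again be built from almost split sequences in $\mathrm{mod}\mbox{-}\Lambda G$ — the only question is how many, and how they are glued.

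\textbf{First I would} dispose of the case $G_{\mathcal{E}}\neq G$. By Lemma \ref{ARS} this is equivalent to $G_M\neq G$ (and $G_T\neq G$), so Proposition \ref{Ind} and Remark \ref{indr} give that $\bar M:=F_\lambda M$ and $\bar T:=F_\lambda T$ are indecomposable with $\hat G_{\bar M}=\hat G_{\bar T}=\hat G$. Applying $F_\lambda$ to $\mathcal{E}$ gives a short exact sequence $0\to \bar M\to F_\lambda N\to \bar T\to 0$; the end terms are indecomposable, so it suffices to verify this sequence is almost split, which follows because irreducibility is preserved by Corollary \ref{irrsep} (the defining maps of $\mathcal{E}$ are irreducible and have $G_M,G_T\neq G$), and the middle term decomposes exactly as dictated by Lemma \ref{ARS}: each summand $N_j$ with $G_{N_j}=G$ contributes $\bar N_j$, and each orbit $\{{}^{g_k}N_i\}$ with $G_{N_i}\neq G$ collapses to the single indecomposable $\bar N_i$ (again by Remark \ref{indr}), which is precisely the stated form of $\bar{\mathcal E}$. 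The essential point here is that $F_\lambda$ carries the almost split property across because it is an isomorphism on the relevant $\mathrm{irr}$-spaces by Theorem \ref{irrars}.

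\textbf{The harder case} is $G_{\mathcal{E}}=G$, equivalently $G_M=G=G_T$. Here Proposition \ref{rem} forces $F_\lambda M=\bigoplus_{\hat g\in\hat G}{}^{\hat g}\bar M$ and $F_\lambda T=\bigoplus_{\hat g\in\hat G}{}^{\hat g}\bar T$, so the image of $\mathcal E$ is \emph{not} a single almost split sequence. The strategy is to recover the individual sequences $\bar{\mathcal E}^k$ by applying $G_\lambda$ to an almost split sequence of $\Lambda G$ terminating in ${}^{\hat g_k}\bar T$ and tracking it through Theorem \ref{adj}: since $\Lambda$ is the skew group algebra of $\Lambda G$ under $\hat G$, the functor $G_\lambda$ plays the role of a pushdown for the $\hat G$-action and the orbit $\{{}^{\hat g_k}\bar T\}_{k=1}^n$ under $\hat G$ corresponds to the single $G$-stable module $T$. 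This dictates the indexing: there is one sequence $\bar{\mathcal E}^k$ for each $\hat g_k\in\hat G$. The delicate step — and the one I expect to be the main obstacle — is to identify the common middle piece $Z=\sum_{j\in J}\bar N_j$ and to justify the gluing notation $\uwithtext{Z}_{k=1}^n\bar{\mathcal E}^k$. Concretely, a summand $N_j$ of $N$ with $G_{N_j}\neq G$ produces (by Proposition \ref{Ind}) a single indecomposable $\bar N_j$ that, because it sits under $M$ and $T$ which are both $G$-stable, must appear in the middle term of \emph{every} $\bar{\mathcal E}^k$; this forced sharing is exactly the phenomenon of gluing along $Z$, and when no such $N_j$ occurs (i.e. $Z=0$) the sequences are genuinely independent and $F_\lambda(\mathcal E)=\bigoplus_{k=1}^n\bar{\mathcal E}^k$.

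\textbf{To make the gluing precise} I would argue as follows. Applying $F_\lambda$ to $\mathcal E$ and invoking Theorem \ref{rank} yields an exact complex whose maps lie in $\mathrm{rad}\setminus\mathrm{rad}^2$; decomposing the middle term $F_\lambda N$ according to Lemma \ref{ARS} into the $G$-stable part $\sum_{i\in I}{}^{\hat g}\bar N_i$ (one copy per $\hat g\in\hat G$) and the $G$-unstable part $\sum_{j\in J}\bar N_j$ (a single copy each), one sees that the matrix of $F_\lambda$ applied to the irreducible maps of $\mathcal E$ is, up to the index $k$, block-diagonal on the ${}^{\hat g_k}\bar N_i$ blocks but shares the $\bar N_j$ blocks across all $k$. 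Reading off the $k$-th diagonal block gives the short exact sequence $\bar{\mathcal E}^k:0\to {}^{\hat g_k}\bar M\to \sum_{j\in J}\bar N_j\oplus\sum_{i\in I}{}^{\hat g_k}\bar N_i\to {}^{\hat g_k}\bar T\to 0$, and each is almost split because its end terms are indecomposable and its maps are irreducible (Remark \ref{irrcomp}, applied to the $G_M=G_T=G$ situation where every diagonal entry of $F_\lambda$ of an irreducible map is irreducible). Minimality of the middle term — the property distinguishing an almost split sequence from an arbitrary non-split one with the same ends — transfers because $G_\lambda$ recovers $\mathcal E$ up to the $G$-action and $\mathcal E$ was assumed almost split. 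This completes both cases.
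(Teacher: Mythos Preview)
Your overall strategy coincides with the paper's: use exactness of $F_\lambda$, decompose the end terms via Propositions~\ref{rem} and~\ref{Ind}, transfer irreducibility via the radical-preservation Theorem~\ref{rank} (equivalently Corollary~\ref{irrsep} and Remark~\ref{irrcomp}), and rule out extra middle summands by pulling back along $G_\lambda$. The block-matrix reading of $F_\lambda(f)$ you describe in the $G_{\mathcal E}=G$ case is exactly how the paper extracts the individual sequences $\bar{\mathcal E}^k$.

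There is, however, a genuine mix-up in your treatment of the case $G_{\mathcal E}\neq G$. You write that a summand $N_j$ with $G_{N_j}=G$ ``contributes $\bar N_j$'' and that an orbit with $G_{N_i}\neq G$ ``collapses to a single $\bar N_i$''. This is backwards: by Proposition~\ref{rem}, a $G$-fixed summand $N_j$ satisfies $F_\lambda N_j=\bigoplus_{\hat g}{}^{\hat g}\bar N_j$ (it \emph{splits}, and these are precisely the terms indexed by $I$ in the statement), while a summand with $G_{N_i}\neq G$ gives the single indecomposable $\bar N_i$ (the terms indexed by $J$). Relatedly, Lemma~\ref{ARS} only constrains the middle term when $G_{\mathcal E}=G$, so invoking it directly here is not legitimate; and Theorem~\ref{irrars} as stated covers only the mixed cases $G_M=G\neq G_N$ or $G_N=G\neq G_M$, so it does not by itself certify almost-splitness of $\bar{\mathcal E}$.

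The paper sidesteps all of this in the $G_{\mathcal E}\neq G$ case with a single stroke: since $F_\lambda M=\bar M$ is indecomposable with $\hat G_{\bar M}=\hat G$ (Remark~\ref{indr}), one applies Lemma~\ref{ARS} \emph{to the dual $\hat G$-action on $\Lambda G$} (legitimated by Theorem~\ref{adj}) and reads off the shape of the almost split sequence in $\Lambda G$ starting at $\bar M$. This duality move is both shorter and cleaner than a direct decomposition of $F_\lambda N$; it also explains why the $I$/$J$ labelling in the two cases of the theorem is symmetric under swapping $(\Lambda,G)\leftrightarrow(\Lambda G,\hat G)$.
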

\begin{proof}
Since $F_\lambda$ is exact, we have $F_\lambda(\mathcal E)$ is also an exact sequence. 

\noindent{\underline{$G_{\mathcal{E}}=G$:}} In this case, $G_{MT}=G$ by Lemma \ref{ARS}. Moreover, Proposition \ref{rem} says that $F_\lambda (M)=\bigoplus_{\hat{g}\in \hat{G}} {}^{\hat{g}} \bar M$, $F_\lambda (T)=\bigoplus_{\hat{g}\in \hat{G}} {}^{\hat{g}} \bar T$ for some $\bar M, \bar T \in \mathrm{Ind}\mbox{-}\Lambda G$ and  $F_\lambda (N)=\sum_{j\in J}\bar{N_j}\oplus\sum_{k=1}^n\sum_{i\in I}{}^{\hat{g}_k} \bar{N_i}$ where, for each $i\in I, j\in J$ we have $\bar N_i, \bar N_j \in \mathrm{Ind}\mbox{-}\Lambda G$ with $G_{\bar {N_i}}\neq G$ and $G_{\bar{N_j}}= G$ respectively. 

Moreover, in $\mathcal E$, there are irreducible morphisms from $M \to N_i$ and $M \to \sum_{k=1}^n {}^{g_k}N_j$ for each $i\in I, j\in J$. 
Since, $G_{MN_i}=G$ and $G_{N_j}\neq G$, there are irreducible morphisms from ${}^{\hat{g}_k}\bar M$ to ${}^{\hat{g}_k} \bar N_i$ and $\bar N_j$ for ${}^{\hat{g}} \bar M, {}^{\hat{g}_k} \bar N_i, \bar N_j \in \mathrm{Ind}\mbox{-}\Lambda G$ for each $i\in I, j\in J$ by Remark \ref{irrcomp}. There is no more irreducible morphism from ${}^{\hat{g}_k}\bar M$ in $\Lambda G$, otherwise it would produce another irreducible morphism from $M$ in $\Lambda$ under $G_\lambda$, which is not a part of $\mathcal{E}$, a contradiction. Hence, we get, ${}^{\hat{g}_k} \bar{M}\to \sum_{j\in J}\bar{N_j}\oplus\sum_{i\in I}{}^{\hat{g}_k} \bar{N_i}$ is irreducible for each $\hat{g}_k\in \hat{G}$. 

Similarly, we can show that $\sum_{j\in J}\bar{N_j}\oplus\sum_{i\in I}{}^{\hat{g}_k} \bar{N_i} \to {}^{\hat{g}_k} \bar{T}$ is irreducible for each $\hat{g}_k\in \hat{G}$. Therefore, exactness of $\bar{\mathcal E}^k$ concludes that it is also an almost split sequence. 

\noindent{\underline{$G_{\mathcal{E}}\neq G$:}} In this case, $G_{M}\neq G$ by Lemma \ref{ARS}. Moreover, Proposition \ref{rem} says that $G_{\bar{M}}=G$ and thus the result follows again by Lemma \ref{ARS}.
\end{proof}

We will illustrate this in the next example.

\begin{example}
Consider the algebra $\Lambda$ and its skew algebra $\bar{\Lambda}$ in example \ref{ARQSKW}. 
\[\begin{tikzcd}[sep={3.6em,between origins}]
                                                         & \begin{matrix} 2 \\  1\\ 2 \end{matrix} \arrow[rd]        & {}                                               & {} \arrow[l, "G_\lambda"']                               & \begin{matrix} 2 \\  3\\ 2 \end{matrix} \arrow[rd]  &                                                                                                                  & \begin{matrix} 4 \\  5\\ 4 \end{matrix} \arrow[rd]  &                                               \\
\begin{matrix} 1 \\ 2 \end{matrix} \arrow[ru] \arrow[rd] & \mathcal{E}_1                                               & \begin{matrix} 2 \\ 1\ 3\ 4 \\ 2\ 2 \end{matrix} & \begin{matrix} 3 \\ 2 \end{matrix} \arrow[ru] \arrow[rd] & \bar{\mathcal{E}}_1^1                                 & \begin{matrix} 2 \\ 3\ 1 \\ 2\ 4 \end{matrix} \bigoplus \begin{matrix} 5 \\ 4 \end{matrix} \arrow[ru] \arrow[rd] & \bar{\mathcal{E}}_1^2                                 & \begin{matrix} 4 \\ 1\ 5 \\ 2\ 4 \end{matrix} \\
                                                         & \begin{matrix} 1 \\  3\ 4 \\ 2\ 2 \end{matrix} \arrow[ru] & {} \arrow[r, "F_\lambda"']                       & {}                                                       & \begin{matrix} 3\ 1 \\ 2\ 4 \end{matrix} \arrow[ru] &                                                                                                                  & \begin{matrix} 1\ 5 \\ 2\ 4 \end{matrix} \arrow[ru] &                                              
\end{tikzcd}\]

\[
\begin{tikzcd}[sep={2.95em,between origins}]
                                    & \begin{matrix} 1 \\ 2 \end{matrix} \arrow[rdd] &                                                & {}                         & {} \arrow[l, "G_\lambda"'] &                         & \begin{matrix} 3 \\ 2 \end{matrix} \arrow[rd]                          &                                          \\
                                    &                                                &                                                &                            &                            & 2 \arrow[ru] \arrow[rd] & \bar{\mathcal{E}}_2^1                                                    & \begin{matrix} 3\ 1 \\ 2\ 4 \end{matrix} \\
2 \arrow[ruu] \arrow[rdd] \arrow[r] & \begin{matrix} 3 \\ 2 \end{matrix} \arrow[r]   & \begin{matrix} 1 \\  3\ 4 \\ 2\ 2 \end{matrix} &                            &                            &                         & \begin{matrix} 1 \\ 2\ 4 \end{matrix} \arrow[ru] \arrow[ld] \arrow[rd] &                                          \\
                                    & \mathcal{E}_2                                    &                                                &                            &                            & 4 \arrow[rd]            & \bar{\mathcal{E}}_2^2                                                    & \begin{matrix} 1\ 5 \\ 2\ 4 \end{matrix} \\
                                    & \begin{matrix} 4 \\ 2 \end{matrix} \arrow[ruu] &                                                & {} \arrow[r, "F_\lambda"'] & {}                         &                         & \begin{matrix} 5 \\ 4 \end{matrix} \arrow[ru]                          &                                         
\end{tikzcd}
\]
Here, we consider two A-R sequences $\mathcal{E}_i$ over $\Lambda$ with  $G_{\mathcal{E}_i}=G$ for $i\in \{1,2\}$. In the first example, $J=\phi$ and hence $F_\lambda(\mathcal{E}_1)$ is obtained by gluing $\bar{\mathcal{E}}_1^1$ and $\bar{\mathcal{E}}_1^2$ via the module $0$ i.e. $F_\lambda(\mathcal{E}_1)=\bar{\mathcal{E}}_1^1\uwithtext{0}\bar{\mathcal{E}}_1^2=\bar{\mathcal{E}}_1^1\bigoplus\bar{\mathcal{E}}_1^2$. Whereas, in the second example, we have $J=\{1,2\}$ and $N_1=\begin{matrix} 3 \\ 2 \end{matrix}$ and $N_2=\begin{matrix} 4 \\ 2 \end{matrix}$ and hence $F_\lambda(\mathcal{E}_2)$ is obtained by gluing $\bar{\mathcal{E}}_2^1$ and $\bar{\mathcal{E}}_2^2$ via the module $Z=\begin{matrix} 1 \\ 2\ 4 \end{matrix}$ i.e. $F_\lambda(\mathcal{E}_2)=\bar{\mathcal{E}}_2^1\uwithtext{Z}\bar{\mathcal{E}}_2^2$. On the other hand, in both the examples, for each $\bar{\mathcal{E}}_j^i$, $j\in \{1,2\}$, we have $G_{\bar{\mathcal{E}}_j^i}\neq G$ and applying $G_\lambda$, one can easily verify the desired result.   
\end{example}
\end{section}

\end{document}